\def\BibTeX{{\rm B\kern-.05em{\sc i\kern-.025em b}\kern-.08em
    T\kern-.1667em\lower.7ex\hbox{E}\kern-.125emX}}
\tikzstyle{morphism}=[fill=white, draw=black, shape=rectangle]
\tikzstyle{medium box}=[fill=white, draw=black, shape=rectangle, minimum width=0.7cm, minimum height=0.7cm]
\tikzstyle{large morphism}=[fill=white, draw=black, shape=rectangle, minimum width=1.7cm, minimum height=1cm]
\tikzstyle{bn}=[fill=black, draw=black, shape=circle, inner sep=1.5pt]
\tikzstyle{state}=[fill=white, draw=black, regular polygon, regular polygon sides=3, minimum width=0.8cm, shape border rotate=180, inner sep=0pt]
\tikzstyle{medium state}=[fill=white, draw=black, regular polygon, regular polygon sides=3, minimum width=1.3cm, inner sep=0pt, shape border rotate=180]
\tikzstyle{large state}=[fill=white, draw=black, regular polygon, regular polygon sides=3, minimum width=2.2cm, shape border rotate=180, inner sep=0pt]
\tikzstyle{wide state}=[fill=white, draw=black, shape=isosceles triangle, minimum width=0.8cm, shape border rotate=270, inner sep=1.4pt, minimum height=0.5cm, isosceles triangle apex angle=80]
\tikzstyle{wn}=[fill=white, draw=black, shape=circle, inner sep=1.5pt]
\tikzstyle{blue morphism}=[fill=white, draw={rgb,255: red,15; green,0; blue,150}, shape=rectangle, text={rgb,255: red,15; green,0; blue,150}, tikzit category=blue]
\tikzstyle{blue state}=[fill=white, draw={rgb,255: red,15; green,0; blue,150}, shape=circle, regular polygon, regular polygon sides=3, minimum width=0.8cm, shape border rotate=180, inner sep=0pt, text={rgb,255: red,15; green,0; blue,150}, tikzit category=blue]
\tikzstyle{blue node}=[fill={rgb,255: red,15; green,0; blue,150}, draw={rgb,255: red,15; green,0; blue,150}, shape=circle, tikzit category=blue, inner sep=1.5pt]
\tikzstyle{blue}=[text={rgb,255: red,15; green,0; blue,150}, tikzit draw={rgb,255: red,191; green,191; blue,191}, tikzit category=blue, tikzit fill=white, inner sep=0mm]
\tikzstyle{blue wide state}=[fill=white, draw={rgb,255: red,15; green,0; blue,150}, text={rgb,255: red,15; green,0; blue,150}, shape=isosceles triangle, minimum width=0.8cm, shape border rotate=270, inner sep=1.4pt, minimum height=0.5cm, isosceles triangle apex angle=80]
\tikzstyle{red node}=[fill={rgb,255: red,150; green,0; blue,2}, draw={rgb,255: red,150; green,0; blue,2}, shape=circle, inner sep=1.5pt]
\tikzstyle{Purple node}=[fill={rgb,255: red,120; green,0; blue,120}, draw={rgb,255: red,120; green,0; blue,120}, text={rgb,255: red,120; green,0; blue,120}, shape=circle, inner sep=1.5pt]
\tikzstyle{red}=[text={rgb,255: red,150; green,0; blue,2}, inner sep=0mm, tikzit fill=white, tikzit draw={rgb,255: red,191; green,191; blue,191}]
\tikzstyle{purple}=[text={rgb,255: red,150; green,0; blue,150}, inner sep=0mm, tikzit fill=white, tikzit draw={rgb,255: red,191; green,191; blue,191}]
\tikzstyle{white morphism}=[fill=white, draw=white, shape=rectangle, tikzit draw={rgb,255: red,139; green,139; blue,139}]
\tikzstyle{leak morphism}=[fill=white, draw={rgb,255: red,120; green,0; blue,85}, shape=rectangle, text={rgb,255: red,120; green,0; blue,85}, tikzit category=leak]
\tikzstyle{leak}=[text={rgb,255: red,120; green,0; blue,85}, inner sep=0mm, tikzit fill=white, tikzit draw={rgb,255: red,191; green,191; blue,191}, tikzit category=leak]
\tikzstyle{leak node}=[fill={rgb,255: red,120; green,0; blue,85}, draw={rgb,255: red,120; green,0; blue,85}, shape=circle, inner sep=1.5pt, tikzit category=leak]
\tikzstyle{arrow}=[->]
\tikzstyle{dashed box}=[-, dashed]
\tikzstyle{blue line}=[-, draw={rgb,255: red,15; green,0; blue,150}, tikzit category=blue]
\tikzstyle{red arrow}=[-, draw={rgb,255: red,150; green,0; blue,2}, tikzit category=red]
\tikzstyle{purple line}=[draw={rgb,255: red,120; green,0; blue,120}, >=stealth, shorten <=2pt, shorten >=2pt, -]
\tikzstyle{protected purple line}=[draw={rgb,255: red,120; green,0; blue,120}, >=stealth, shorten <=2pt, shorten >=2pt, preaction={line width=1.8pt, white, draw}, -]
\tikzstyle{mapsto}=[{|->}]
\tikzstyle{double wire}=[-, double]
\tikzstyle{protected}=[-, preaction={line width=1.8pt,white,draw}]
\tikzstyle{leak arrow}=[-, line join=round, decorate, decoration={snake, segment length=4, amplitude=0.75, pre=curveto, post=curveto, pre length=1pt, post length=1pt}]
\tikzstyle{protected leak arrow}=[-, line join=round, decorate, decoration={snake, segment length=4, amplitude=0.75, pre=curveto, post=curveto, pre length=1pt, post length=1pt}, preaction={line width=1.8pt, white, draw}]
\tikzstyle{hollow arrow}=[-, very thin, white, preaction={line width=0.7pt,draw={rgb,255: red,120; green,0; blue,85}}, tikzit category=leak, tikzit draw={rgb,255: red,150; green,0; blue,120}]
\tikzstyle{protected hollow arrow}=[-, very thin, white, preaction={line width=0.7pt,draw={rgb,255: red,120; green,0; blue,85},preaction={line width=2.1pt,white,draw}}, tikzit category=leak, tikzit draw={rgb,255: red,150; green,0; blue,120}]
\tikzstyle{curly brace}=[-, decorate, decoration={brace,amplitude=5pt}]
\newcommand\tikzfigscaling{0.8}
\newcommand{\tikzfigscaled}[1]{\adjustbox{scale=\tikzfigscaling}{\tikzfig{#1}}}
\newtheorem{definition}{Definition}
\newtheorem{proposition}{Proposition}
\newtheorem{lemma}{Lemma}
\newtheorem{example}{Example}
\newcommand{\R}{\mathbb R}
\newcommand{\op}{\mathrm{op}}
\renewcommand{\det}{\mathrm{det}}
\newcommand{\cat}[1]{\mathbb{{#1}}}
\newcommand{\C}{\cat{C}}
\renewcommand{\P}{\cat{P}}
\renewcommand{\S}{\cat{S}}
\renewcommand{\sp}[1]{\mathbf{#1}}
\newcommand{\catname}[1]{\mathbf{{#1}}}
\newcommand{\set}{\catname{Set}}
\newcommand{\setmulti}{\catname{SetMulti}}
\newcommand{\finstoch}{\catname{FinStoch}}
\newcommand{\borelstoch}{\catname{BorelStoch}}
\newcommand{\gauss}{\catname{Gauss}}
\newcommand{\strongname}{\catname{StrongName}}
\newcommand{\nom}{\catname{Nom}}
\newcommand{\snom}{\catname{sNom}}
\newcommand{\inj}{\catname{FinInj}}
\newcommand{\surj}{\catname{Surj}}
\newcommand{\iso}{\catname{Isom}}
\newcommand{\coiso}{\catname{CoIsom}}
\newcommand{\con}{\catname{Con}}
\newcommand{\keyword}[1]{\mathrm{{#1}}}
\newcommand{\id}{\keyword{id}}
\newcommand{\cpy}{\keyword{copy}}
\newcommand{\del}{\keyword{del}}
\newcommand{\N}{\mathcal N}
\newcommand{\s}{\,|\,}
\newcommand{\ase}[1]{=_{#1}}
\newcommand{\reldot}{\otimes} 
\newcommand{\stat}[1]{\sp{#1}} 
\newcommand{\A}{\mathbb A}
\newcommand{\rep}[1]{\A^{\ast{#1}}}
\newcommand{\name}[2]{\langle {#1} \rangle {#2}}
\newcommand{\orb}{\mathrm{orb}}
\newcommand{\perm}{\mathrm{Perm}}
\newcommand{\supp}{\mathrm{supp}}
\newcommand{\psh}[1]{\mathrm{Psh}{#1}}
\newcommand{\sh}[1]{\mathrm{Sh}{#1}}
\newcommand{\disc}[1]{\underline{#1}}
\newcommand{\rv}{\mathrm{RE}}
\newcommand{\y}{\mathbf{y}}
\newcommand{\defeq}{\stackrel{\mathrm{def}}{=}}
\begin{document}

\title{Random Variables, Conditional Independence and Categories of Abstract Sample Spaces}

\author{\IEEEauthorblockN{Dario Stein}
\IEEEauthorblockA{\textit{iHub}, \textit{Radboud Universiteit Nijmegen}\\
Nijmegen, The Netherlands \\
ORCID 0009-0002-1445-4508}
}

\maketitle

\begin{abstract}
Two high-level "pictures" of probability theory have emerged: one that takes as central the notion of random variable, and one that focuses on distributions and probability channels (Markov kernels). While the channel-based picture has been successfully axiomatized, and widely generalized, using the notion of Markov category, the categorical semantics of the random variable picture remain less clear. Simpson's probability sheaves are a recent approach, in which probabilistic concepts like random variables are allowed vary over a site of sample spaces. Simpson has identified rich structure on these sites, most notably an abstract notion of conditional independence, and given examples ranging from probability over databases to nominal sets. 

We aim bring this development together with the generality and abstraction of Markov categories: We show that for any suitable Markov category, a category of sample spaces can be defined which satisfies Simpson's axioms, and that a theory of probability sheaves can be developed purely synthetically in this setting. We recover Simpson's examples in a uniform fashion from well-known Markov categories, and consider further generalizations. 
\end{abstract}

\begin{IEEEkeywords}
conditional independence, category theory, probability theory, nominal sets, separation logic
\end{IEEEkeywords}

\section{Introduction}
Two pictures (or formalisms) of stochastic computation have emerged: The classical treatment (e.g. \cite{kallenberg1997foundations}) focuses on the notion of \emph{random variable}. Traditionally, one considers a \emph{sample space} $(\Omega,\mathcal E,P)$ consisting of a measurable space $(\Omega,\Sigma)$ and a probability measure $P : \mathcal E \to [0,1]$. A random variable is then a measurable function $\Omega \to V$ into a space of interest (typically $\R$). 

Another picture, prominent in theoretical computer science and physics, has taken the notion of \emph{probability channel} or \emph{Markov kernel} $\kappa : (X,\mathcal E) \leadsto (Y,\mathcal F)$ between measurable spaces as primitive. This is a parameterized probability distribution, associating to every $x \in X$ a probability measure $\kappa(x,-)$ on $Y$ in a measurable way. A channel between finite discrete spaces can be described as a stochastic matrix (a.k.a. conditional probability table). In this picture, it is studied how distributions and channels compose and decompose without postulating random variables. Distributions are a special case of channels with trivial input $X=1$. 

The two pictures have different strengths and weaknesses as they emphasize different notions. Equality in distribution and conditional distributions is natural to express using channels, while equality almost surely or conditional expectation are more naturally phrased using random variables. 

The channel-based picture is very amenable to the language of category theory. Recent efforts in categorical probability have succeeded in re-phrasing various probabilistic concepts in a categorical form (e.g. \cite{cho2019disintegration,fritz2023absolute,fritz2020infinite,fritz2021finetti,perrone2024convergence,ensarguet2023categorical,fritz2023d}). Notably, the abstract notion of \emph{Markov categories} \cite{fritz2019synthetic} has encapsulated important aspects of channel-based probability and opened them up to substantial generalizations. There are a wide variety of Markov categories capturing different structural models of channel-based probability, such as Gaussian probability, nondeterminism or fresh name generation. Channel-based probability also has close ties to the denotational semantics of probabilistic programs (e.g. \cite{staton2017commutative,stein2021structural}).

How to recover the random-variable picture from the channel-based one is less clear: One difficulty, noted by Tao \cite{tao}, is that the formal status sample space $\Omega$ is somewhat ephemeral. Often, this sample space is extended on-the-fly, and the (implicit) promise is that all meaningful constructions remain invariant under these extensions. Alex Simpson has proposed to model random variables using \emph{probability sheaves} over categories $\S$ in which the sample space varies \cite{simpson2017probability,simpson:atomic}. This is similar to fresh name generation and various forms of generativity in computer science, which use sheaves over `many worlds' to capture information available at different stages (the classical example is the Schanuel topos \cite{pitts2013nominal}). These sheaf-theoretic constructions guarantee equivariance, that is everything is consistent with respect to extension of sample spaces.

Simpson's work has identified rich structure on these categories $\S$ of sample spaces. Crucially, they carry atomic topologies \cite{simpson:atomic} and admit an axiomatic notion of conditional independence \cite{simpson:independence} called \emph{independent pullbacks}. He has given an array of example categories satisfying his assumptions, for example $\catname{FinProb}$ (discrete probability), $\catname{Prob}$ (Borel probability), $\surj$ (nondeterminism). It makes sense to think of these categories as \emph{abstract categories of sample spaces}.

The goal of this work is to connect Simpson's analysis with the Markov-categorical picture. We define, for any suitable Markov category $\C$, a category of sample spaces $\S(\C)$ and show that a theory of conditional independence and probability sheaves can be developed purely synthetically on these categories. Concretely,  
\begin{enumerate}
\item we recall probability spaces $\P(\C)$ and sample spaces $\S(\C)$ in Section~\ref{sec:ps_categories}
\item we define the independent pullback structure on $\S(\C)$, and prove that it satisfy the axioms of \cite{simpson:atomic} (Section~\ref{sec:independence_structure})
\item we recover Simpson's example categories as instances of our construction for the well-known Markov categories $\C=\catname{FinStoch},\catname{BorelStoch},\setmulti$ (Section~\ref{sec:examples}). While Simpson used a mix of abstract and model-specific arguments, our proofs rely solely on synthetic proofs and notions involving the category $\C$
\item we obtain new examples of sample spaces by analyzing Markov categories such as $\catname{Gauss}$ (Gaussian probability) and $\catname{StrongName}$ (fresh name generation). Their categories of sample spaces turn out to be equivalent to well-known and interesting examples: $\coiso$ (linear co-isometries) and $\catname{FinInj}^\op$ (opposite of finite sets and injections)
\item we begin developing the theory probability sheaves abstractly over our category of sample spaces $\S(\C)$ in Section~\ref{sec:sheaves}. This development generalizes the construction of the Schanuel topos for fresh name generation \cite{pitts2013nominal}. The relation between nominal techniques and stochastic independence has recently been of interest in probabilistic separation logics \cite{li:nominal}.
\end{enumerate}

The following concepts are of central importance in the context of our development:

\paragraph{Probability spaces} 

A probability space is the same object $(\Omega,\mathcal E,P)$ as a sample space, but the notion of morphism between them is different: A morphism of probability spaces is known as a \emph{coupling}, sometimes also \emph{kernel} or \emph{joint distribution} \cite{dahlqvist2019semantics,kozen2023joint}. Probability spaces are of course a ubiquitous notion in all of probability theory and functional analysis, but recently abstract properties of categories of probability spaces have been highlighted and used to great effect \cite{perrone2024convergence,ensarguet2023categorical,parzygnat2023axioms}.

\emph{Bayesian inversion} \cite{cho2019disintegration}, one could argue, is the most fundamental operation of machine learning, as it expresses the updating of our state of knowledge given a new observation. Probability spaces are a natural setting to study this operation, as Bayesian inversion forms a \emph{dagger functor} on this category, i.e. a contravariant involution.

Probability spaces $\P(\C)$ have been defined abstractly for Markov categories $\C$ \cite{fritz2019synthetic}. As sample spaces form a subcategory $\S(\C) \subseteq \P(\C)$, our work develops the theory of both categories in tandem. There is some interesting interplay between morphisms of probability spaces (couplings) and of sample spaces (extensions), which is absent in earlier work of Simpson.

\paragraph{Conditional independence}

Conditional independence is a crucial assumption for statistical modelling, and for reasoning about and optimizing probabilistic programs \cite{li:lilac}. There are various interrelated axiomatizations of conditional independence, most prominently semigraphoids \cite{pearl2022graphoids}, but also conditional products \cite{dawid1999conditional} or the independence structures considered by Simpson \cite{simpson:independence}. From a more generalized point of view, independence relations are pervasive to many contexts: stochastic independence, logical independence, and separation (freshness, separation logic). As was the case in \cite{simpson:independence}, all these aspects are modelled by our various examples. 

\section{Categories of Abstract Sample Spaces}\label{sec:ps_categories}

\noindent In this section, we recall the notion of \emph{Markov category}, which is a general framework for nondeterministic processes, such as probability, nondeterminism or fresh name generation. We then recall the derived notions of \emph{probability space} and \emph{sample space} over a suitable Markov category $\C$, which are our central objects of study. We recall abstract categorical characterizations of \emph{conditionals} and \emph{almost-sure equality} as auxiliary notions. Readers familiar with these notions may wish to skip to section \ref{sec:examples} which discusses and recovers Simpson's examples.  

\subsection{Recap: Markov Categories}

\noindent Markov categories capture some fundamental aspects of probabilistic computations: They can be composed in sequence (categorical composition $\circ$), in parallel (monoidal composition $\otimes$), and information can be copied and discarded by means of distinguished maps $\cpy_X : X \to X \otimes X$ and $\del_X : X \to I$. The copy maps are \emph{not} natural, which allows for differentiating between correlation (copying) and independence (re-computing). As is standard, we will use string diagrams \cite{selinger2011survey} alongside ordinary categorical composition to manipulate complex composites in a visually appealing form. Unless otherwise indicated, the material in this background section stems from \cite{fritz2019synthetic}. Recall that a symmetric monoidal category is \emph{semicartesian} if its unit $I$ is a terminal object. This endows every pair of objects $X,Y$ with canonical projections $X \xleftarrow{\pi_X} X \otimes Y \xrightarrow{\pi_Y} Y$.

\begin{definition}
A Markov category is a semicartesian symmetric monoidal category $(\C,\otimes,I)$ where each object is equipped with the structure of a commutative comonoid $(X,\cpy_X,\del_X)$ compatible with the monoidal structures (see Figure~\ref{fig:markov_axioms} for the full axioms). We render (iterated) copy and delete as follows
\[ \tikzfigscaled{markov_ops} \]
Note that terminality of $I$ implies that deletion is natural. 
\end{definition}

\noindent The prototypical examples of Markov categories come from probability theory; the formalism encompasses various ``flavors'', such as discrete, topologically continuous, measure-theoretic probability.

\begin{example}[Discrete probability]
The Markov category $\finstoch$ has as objects finite sets $X$, and as morphisms stochastic matrices $p \in \R^{Y \times X}$, meaning $p(y|x) \geq 0$ and
\[ \forall x \in X, \sum_y p(y|x) = 1 \]
Composition is matrix multiplication.
\end{example}

\begin{example}[Borel probability]
The category $\borelstoch$ has as objects standard Borel spaces $(X,\Sigma_X)$, and morphisms are \emph{Markov kernels}, i.e. maps $\kappa : X \times \Sigma_Y \to [0,1]$ such that $\kappa(x,A)$ is measurable in $x \in X$ and a probability measure in $A \in \Sigma_Y$. Composition is integration
\[ (\kappa \circ \tau)(x,A) = \int \kappa(y,A) \tau(x,\mathrm d y)\]
\end{example}

\noindent Certain subcategories of $\borelstoch$ admit a simple description without measure theory. A notable case is \emph{Gaussian probability}, which is the study of affine-linear maps with Gaussian (multivariate normal) noise.

\begin{example}[Gaussian probability]
The category $\gauss$ has as objects the spaces $\R^n$, and morphisms $\R^m \to \R^n$ are triples $(A,b,\Sigma)$ where $A \in \R^{m \times n}$, $b \in \R^n$ and $\Sigma \in \R^{n \times n}$ is positive semidefinite. The triple represents the probability kernel informally written $f(x) = Ax + \mathcal N(b,\Sigma)$. Composition is given by the rule
\[ (A,b,\Sigma) \circ (C,d,\Xi) = (AC, b+Ad,\Sigma + A\Xi A^T) \] 
\end{example}

\noindent Now for non-probabilistic examples:

\begin{example}[Nondeterminism]
The category $\setmulti$ has as objects sets $X$, and morphisms $X \to Y$ are left-total relations $R \subseteq X \times Y$, meaning $\forall x \exists y, (x,y) \in R$. Composition is relation composition.
\end{example}

\noindent We can view morphisms in $\setmulti$ as computations which may choose between one or more possible outputs nondeterministically. Another example due to \cite{stein2021structural} formalizes fresh name generation as a Markov category. 

\begin{example}[Fresh name generation]
The category $\strongname$ has as objects strong nominal sets $X$, and morphisms are (equivariant) Kleisli maps $X \to N(Y)$ for the name generation monad (a.k.a. free restriction set monad) $N$ of \cite[Section~9.5]{pitts2013nominal}. For example, there are exactly two Kleisli maps of type $\A \to N(\A)$, namely the identity $i(a) = \langle \rangle a$ and the fresh-name allocating function $f(a) = \langle b \rangle b \text{ for any } b \neq a$. Here the brackets $\langle b \rangle$ represent name binding. We present this category in more detail in the appendix (Section~\ref{sec:nom}).
\end{example}

\noindent As noted in \cite[Example~3]{fritz2019synthetic}, the Kleisli category of any commutative and affine monad will have the structure of a Markov category. Our previous examples arise from monads, namely the distribution monad $D$, Giry monad $\mathcal G$, nonempty powerset monad $\mathcal P^+$, and the name-generation monad $N$. $\gauss$ does not seem to be associated with any monad. 

\subsection{Probabilistic Notions in Markov categories}

\noindent Let $\C$ be a Markov category. We call morphisms $p : I \to X$ states. We use the abbreviation $\langle f,g \rangle = (f \otimes g) \circ \cpy$ for pairing of morphisms. A morphism $f : X \to Y$ is called \emph{deterministic} \cite[Definition~10.1]{fritz2019synthetic} if it commutes with copying
\[ \tikzfigscaled{f_det} \]

\noindent A Markov category $\C$ \emph{has conditionals} \cite[Definition~11.5]{fritz2019synthetic} if for every $f : A \to X \otimes Y$ there exists a factorization of the form
\[ \tikzfigscaled{f_cond} \]

\noindent A special case of conditionals is the \emph{Bayesian inverse}: For every $f : X \to Y$ and $p : I \to X$, there exists $f^\dagger_p : Y \to X$ such that
\begin{equation}
\tikzfigscaled{f_dagger} \label{eq:def_dagger}
\end{equation}

\noindent All running examples have conditionals (see~\cite{fritz2019synthetic}, the case for $\strongname$ is in the appendix). For $\borelstoch$, conditionals correspond precisely to \emph{regular conditional probabilities}.

We will now relativize Markov-categorical notions with respect to a state $p : I \to X$: Two morphisms $f, g : X \to Y$ are called \emph{$p$-almost surely equal}, written $f \ase p q$, if we have the following equality of states
\[ \tikzfigscaled{f_ase} \]
\noindent This definition captures the usual mathematical meaning of almost sure equality in our examples. For example in $\finstoch$, we have $f \ase{p} g$ if $f(y|x) = g(y|x)$ for all $x,y$ with $p(x) > 0$. A morphism $f : X \to Y$ is called $p$-almost surely deterministic if
\[ \tikzfigscaled{f_as_det} \]

\noindent In a Markov category with conditionals, several important \emph{proof principles} are derivable \cite{fritz2023dilations}:
\begin{enumerate}
\item every isomorphism is deterministic
\item \emph{positivity}\footnote{The name is due to the fact that this property typically fails in models that include negative probabilities.}: this expresses that deterministic variables are independent of everything. if $f : A \to X \otimes Y$ has a deterministic marginal (e.g. $\pi_1 \circ f$ is deterministic) then $f$ is the product of its marginals $f = \langle \pi_1 \circ f, \pi_2 \circ f \rangle$
\item \emph{relative positivity}: if $f : A \to X \otimes Y$ has a $p$-almost surely deterministic marginal, then $f \ase{p} \langle \pi_X \circ f, \pi_2 \circ f \rangle$
\end{enumerate}

\subsection{Probability Spaces, Couplings, Sample Spaces}

\noindent Let $\C$ be a Markov category with conditionals. 

\begin{definition}[Probability spaces]
We define the category $\P(\C)$ of probability spaces as follows
\begin{enumerate}
	\item A \emph{probability space} in $\C$ is a pair $\sp \Omega = (\Omega,p)$ of an object $\Omega$ and a state $p : I \to \Omega$
	\item  A morphism of probability spaces $(\Omega,p) \to (\Omega',q)$ is an equivalence class $[f]_p$ of morphisms $f : \Omega \to \Omega'$, up to $p$-almost sure equality, which preserve the state, i.e. satisfy $f \circ p = q$.
	\item Composition is composition in $\C$ on representatives, which is well-defined because $[f]_{gp} \circ [g]_p = [f \circ g]_p$.
\end{enumerate}
\end{definition}

\noindent In abstract terms, almost sure equality defines a congruence relation on the slice category $I/\C$, and $\P(\C)$ the quotient under this congruence. By slight abuse of notation, we will simply write $f$ instead of $[f]$. The construction $\P(\C)$ is known under the names $\catname{ProbStoch}(\C)$, $\catname{PS}$, or $\mathcal S$ in \cite{fritz2019synthetic,ensarguet2023categorical,parzygnat2023axioms}. 

Central to our development is the subcategory $\S(\C)$ of $\P(\C)$ where morphisms are restricted to the almost surely deterministic ones. This category has not been named in the literature, and we will call it \emph{sample spaces} here. Thus while `probability space' and `sample space' are synonymous for objects $(\Omega,p)$, not every morphism of probability spaces is a morphism of sample spaces. 

\begin{definition}[Sample spaces]
The category of sample spaces is defined as the wide subcategory $\S(\C) \subseteq \P(\C)$ consisting of the morphisms $f : (\Omega,p) \to (\Omega',q)$ which are $p$-almost surely deterministic. 
\end{definition}

\noindent For brevity, when speaking about morphisms of probability spaces, we will shorten `$p$-almost surely deterministic' to `deterministic' (as the $p$ can be read off from the domain probability space). We will also refer to those morphisms as \emph{maps} of sample spaces, as opposed to \emph{channels} which may be nondeterministic (i.e. lie in $\P(\C)$). We will now establish some basic structural properties of the categories $\P(\C)$ and $\S(\C)$ which will be of constant use in what follows. 

\begin{proposition}[\!\!\!{\cite[Section~13]{fritz2019synthetic}}]
The category $\P(\C)$ is semicartesian monoidal with $(X,p) \otimes (Y,q) = (X \otimes Y, p \otimes q)$. Bayesian inversion $f \mapsto f^\dagger$ is a contravariant involutive functor on $\P(\C)$ making it a dagger category (e.g. \cite{karvonen2019way})
\end{proposition}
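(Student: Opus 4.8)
The plan is to verify the three claims of the proposition in turn, treating the monoidal structure first and then the dagger functor. For the monoidal structure on $\P(\C)$, I would define the tensor on objects by $(X,p) \otimes (Y,q) = (X \otimes Y, p \otimes q)$ as stated, and on morphisms by $[f]_p \otimes [g]_q = [f \otimes g]_{p \otimes q}$. The first task is to check this is well-defined on equivalence classes: if $f \ase{p} f'$ and $g \ase{q} g'$, I must show $f \otimes g \ase{p \otimes q} f' \otimes g'$. Unwinding the definition of almost-sure equality, this follows because copying on a tensor factors through copying on each component, so the defining diagram for $f \otimes g$ against $p \otimes q$ decomposes into the two independent diagrams for $f$ against $p$ and $g$ against $q$. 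State-preservation is immediate: $(f \otimes g) \circ (p \otimes q) = (f \circ p) \otimes (g \circ q) = q_1 \otimes q_2$. The coherence isomorphisms (associator, unitors, symmetry) are inherited from $\C$, and they preserve states and are deterministic, so they descend to $\P(\C)$; the coherence axioms hold automatically since they hold in $\C$ and passing to equivalence classes is a quotient functor. Semicartesianness reduces to checking that the unit $(I, \id_I)$ is terminal: for any $(X,p)$, the deletion map $\del_X$ is state-preserving into the unique state on $I$, and any two state-preserving maps into $(I,\id_I)$ are equal since $\C$ is semicartesian, hence their classes coincide.

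For the dagger structure, I would first confirm that Bayesian inversion is well-defined as an operation $\P(\C) \to \P(\C)$. Given $[f]_p : (X,p) \to (Y,q)$ with $q = f \circ p$, the inverse $f^\dagger_p : Y \to X$ is characterized by equation~\eqref{eq:def_dagger}, and I would note that it is only determined up to $q$-almost sure equality — which is exactly why the construction lives naturally on $\P(\C)$ rather than on $\C$ itself. The key lemmas to assemble are: (i) the inverse is independent of the representative of $[f]_p$, since the defining equation depends only on the joint state $\langle \id, f\rangle \circ p$; (ii) functoriality $(g \circ f)^\dagger_p \ase{} f^\dagger_p \circ g^\dagger_{fp}$, which follows by uniqueness of Bayesian inverses once one checks that the composite satisfies the defining equation for $g \circ f$; and (iii) the involution property $(f^\dagger_p)^\dagger_q \ase{p} f$, which follows by the symmetry of the defining equation~\eqref{eq:def_dagger} under swapping the two tensor legs. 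The dagger axioms (contravariant, identity-on-objects, involutive) then follow by combining (ii) and (iii).

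The main obstacle I anticipate is establishing that Bayesian inverses are unique \emph{up to almost-sure equality} and assembling functoriality from this, since this is where the interplay between the conditional-factorization axiom and the congruence defining $\P(\C)$ is most delicate. Concretely, two candidate inverses $f^\dagger$ and $\tilde f^\dagger$ both satisfy the defining joint-distribution equation against $q$, and I must argue this forces $f^\dagger \ase{q} \tilde f^\dagger$; this is essentially the statement that the conditional of a joint state is unique almost surely, which is the content of the disintegration results in \cite{cho2019disintegration, fritz2019synthetic} and which I would invoke rather than reprove. Once uniqueness is in hand, functoriality and involutivity become formal consequences of the defining equation together with the comonoid axioms, manipulated diagrammatically. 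The verification that the coherence data descend cleanly to the quotient is routine but must be stated, as it is what makes $\P(\C)$ a genuine symmetric monoidal category rather than merely a category with a tensor on objects.
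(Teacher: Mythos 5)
Your proposal is correct and follows essentially the same route as the paper, which offers no proof of its own but defers to Section~13 of Fritz's paper: the tensor and coherence data descend to the quotient because almost-sure equality is a congruence compatible with $\otimes$, the unit $(I,\id_I)$ is terminal since $\C(X,I)$ is a singleton, and the dagger structure rests on the almost-sure uniqueness of Bayesian inverses, the chain rule $(g \circ f)^\dagger_p \ase{gfp} f^\dagger_p \circ g^\dagger_{fp}$, and involutivity via the swap-symmetry of the defining equation — exactly the lemmas you identify. The only point worth adding explicitly is that $f^\dagger$ is itself state-preserving (i.e. $f^\dagger \circ q = p$, obtained by marginalizing the defining joint equation), which is needed for $f^\dagger$ to be a morphism $(Y,q) \to (X,p)$ of $\P(\C)$ at all; this is immediate, but it belongs in the checklist alongside your items (i)--(iii).
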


\noindent In particular, the chain rule for Bayesian inversion
\[ (g \circ f)^\dagger_p \,\ase{gfp}\, f^\dagger_p \circ g^\dagger_{fp} \]
simplifies to mere contravariance $(g \circ f)^\dagger = f^\dagger \circ g^\dagger$ when formulated in the category $\P(\C)$. We recall some terminology in dagger categories: A morphism $f : X \to Y$ is called
\begin{itemize}
\item an \emph{isometry} if $f^\dagger \circ f = \id_X$ 
\item a \emph{co-isometry} if $f \circ f^\dagger = \id_Y$
\item \emph{unitary} if it is both isometry and co-isometry (it is an isomorphism with $f^{-1} = f^\dagger$).
\end{itemize}
An equivalence of dagger categories (dagger equivalence) consists of dagger-preserving functors and natural isomorphisms $FG \cong 1$, $GF \cong 1$ with unitary components \cite{karvonen2019way}.

It is well-known that $\P(\C)$ is dagger equivalent to the category of \emph{couplings} (sometimes \emph{kernels}, \emph{joint distributions}) where morphisms $(X,p) \to (Y,q)$ are states $\gamma : I \to X \otimes Y$ with $\pi_1 \circ \gamma = p$ and $\pi_2 \circ \gamma = q$ \cite[Remark~12.10]{fritz2019synthetic}. The dagger on couplings is given by composing with the swap isomorphism $\gamma \mapsto \sigma_{X,Y} \circ \gamma$. 

\begin{proposition}\label{prop:det_coiso}
The following are equivalent for a morphism $f$ in $\P(\C)$
\begin{enumerate}
\item $f$ is deterministic
\item $f$ is a co-isometry, i.e. satisfies $f \circ f^\dagger = \id$
\item $f$ is split epic 
\end{enumerate}
Furthermore, if $f \circ g$ is deterministic, so is $f$. 
\end{proposition}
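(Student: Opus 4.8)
The plan is to prove $(1)\Rightarrow(2)\Rightarrow(3)\Rightarrow(1)$, deriving $(3)\Rightarrow(1)$ from the final ``furthermore'' clause, which I would isolate and establish first as the one genuinely substantial lemma.

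For $(1)\Rightarrow(2)$ I would unfold the equation $f\circ f^\dagger=\id$ in $\P(\C)$, which by definition means $f\circ f^\dagger \ase{q}\id_Y$ with $q=f\circ p$. The defining property of the Bayesian inverse \eqref{eq:def_dagger} states $\langle\id_X,f\rangle\circ p=\langle f^\dagger,\id_Y\rangle\circ q$; swapping the two tensor factors on both sides gives $\langle\id_Y,f^\dagger\rangle\circ q=\langle f,\id_X\rangle\circ p$, and postcomposing the second factor with $f$ yields
\[ \langle\id_Y, f\circ f^\dagger\rangle\circ q=(\id_Y\otimes f)\circ\langle f,\id_X\rangle\circ p=\langle f,f\rangle\circ p. \]
Thus $f$ is a co-isometry precisely when $\langle f,f\rangle\circ p=\cpy_Y\circ f\circ p$, which is the marginal form of $p$-almost-sure determinism obtained from the full defining diagram by deleting the retained input with $\del_X$. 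Since a deterministic $f$ satisfies this marginal identity a fortiori, $(1)\Rightarrow(2)$ follows.

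Implication $(2)\Rightarrow(3)$ is immediate: a co-isometry has $f^\dagger$ as a right inverse, hence is split epic. For $(3)\Rightarrow(1)$ I would invoke the furthermore clause: if $f$ is split epic, say $f\circ g=\id$, then $f\circ g$ is deterministic (identities are), so $f$ is deterministic. It therefore remains to prove the furthermore, that $f\circ g$ deterministic implies $f$ deterministic. For $g:(A,r)\to(B,s)$ and $f:(B,s)\to(C,t)$ I would form the joint ``trajectory'' state on $A\otimes B\otimes C$ in which $C$ is generated from $B$ alone, so that $A$, $B$, $C$ sit in a Markov chain. Determinism of $f\circ g$ says the $C$-component is an almost-surely deterministic function of $A$; the Markov structure should let one transfer this to $B$ via relative positivity, concluding that $C$ is an almost-surely deterministic function of $B$, i.e.\ that $f$ is $s$-almost-surely deterministic.

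I expect this last transfer to be the main obstacle, since unlike the other implications it genuinely uses positivity, reflecting the probabilistic fact that in a Markov chain $A\to B\to C$ one has $H(C\mid B)\le H(C\mid A)$, so that determinism cannot be destroyed at $B$ and recreated at $C$. Making this synthetic requires applying relative positivity to the appropriate marginal of the trajectory state rather than to $f$ directly, and carefully tracking that the resulting almost-sure identities are taken with respect to the correct states ($r$ versus $s=g\circ r$).
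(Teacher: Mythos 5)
Your proof is correct, and the logical skeleton is exactly the paper's: both close the cycle by deriving $(3)\Rightarrow(1)$ from the ``furthermore'' clause applied to $f \circ g = \id$, and both treat $(2)\Rightarrow(3)$ as immediate. The difference is that the paper's proof is citation-based — $(1)\Leftrightarrow(2)$ is quoted from \cite[Proposition~2.5]{ensarguet2023categorical} and the furthermore from \cite[Remark~13.17]{fritz2019synthetic} — whereas you supply direct synthetic arguments. Your computation for $(1)\Rightarrow(2)$ is valid: unfolding \eqref{eq:def_dagger} and postcomposing with $f$ does yield $\langle \id_Y, f f^\dagger\rangle \circ q = \langle f, f\rangle \circ p$, so co-isometry is \emph{equivalent} to the marginal equation $\langle f,f\rangle \circ p = \cpy_Y \circ f \circ p$, of which $p$-a.s.\ determinism is the special case obtained by deleting the retained wire; note you only need this one direction, since $(2)\Rightarrow(1)$ then comes for free around the cycle, while the cited result proves the equivalence head-on. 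Your sketch of the furthermore is the genuinely substantial part and it does go through as outlined: writing $s = g \circ r$, apply relative positivity (with respect to $r$) to the two states $A \to C \otimes C \otimes B$ given by $(f \otimes f \otimes \id_B)\circ \cpy^{(3)}_B \circ g$ and $(\cpy_C \circ f \otimes \id_B)\circ \cpy_B \circ g$; in each case the first marginal is the $r$-a.s.\ deterministic $fg$, so both collapse to $\langle fg, fg, g\rangle \circ r$, and comparing them gives $\langle \id_B, \langle f,f\rangle\rangle \circ s = \langle \id_B, \cpy_C \circ f\rangle \circ s$, i.e.\ $f$ is $s$-a.s.\ deterministic — exactly the transfer along the Markov chain you anticipated, with the state bookkeeping ($r$ for the positivity steps, $s$ for the conclusion) working out as you flagged. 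What the paper's route buys is brevity and reuse of the literature; what yours buys is a self-contained proof within the paper's own toolkit (the Bayesian-inverse equation and relative positivity, both available since $\C$ has conditionals), at the cost of needing to fill in the final positivity computation explicitly.
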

\begin{proof}
The equivalence of the first two statements is \cite[Proposition~2.5]{ensarguet2023categorical}, and co-isometries are split epic. The statement $f \circ g$ deterministic $\Rightarrow$ $f$ deterministic is precisely \cite[Remark~13.17]{fritz2019synthetic}. In particular, if $f$ is split epic then $f \circ g = \id$ is deterministic, so $f$ is deterministic.
\end{proof}

It follows that $\S(\C)$ can be identified as the subcategory of co-isometries of $\P(\C)$. By the last point, every isomorphism in $\P(\C)$ is unitary and deterministic, hence lies in $\S(\C)$: That means if two probability spaces are isomorphic, they are also isomorphic \emph{as sample spaces}. 

\begin{proposition}\label{prop:sc_monic}
The category $\S(\C)$ is semicartesian monoidal. Every morphism in $\S(\C)$ is epic. The canonical projections $\sp X \leftarrow \sp X \otimes \sp Y \rightarrow \sp Y$ are jointly monic. 
\end{proposition}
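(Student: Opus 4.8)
The plan is to treat the three assertions in turn, leaning on Proposition~\ref{prop:det_coiso} (deterministic $=$ split epic $=$ co-isometry) together with the relative positivity principle, and to isolate the only genuinely probabilistic step at the end.

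For the monoidal structure, I would restrict $\otimes$ from $\P(\C)$ and merely check that it lands in $\S(\C)$. By Proposition~\ref{prop:det_coiso} the deterministic morphisms are exactly the split epimorphisms of $\P(\C)$, and if $s,t$ split $f,g$ in $\P(\C)$ then $s\otimes t$ splits $f\otimes g$; hence $\S(\C)$ is closed under $\otimes$. All coherence isomorphisms lie in $\S(\C)$ because every isomorphism of $\P(\C)$ is deterministic (noted after Proposition~\ref{prop:det_coiso}), so the symmetric monoidal structure descends, the coherence laws holding automatically since they already hold in $\P(\C)$. Finally $I=(I,\id_I)$ remains terminal: the unique map $\del_X\colon\sp X\to I$ of $\P(\C)$ is deterministic, so it lives in $\S(\C)$, and uniqueness is inherited from terminality in $\P(\C)$. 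That every morphism is epic is then immediate, since split epimorphisms are epic in any category: morphisms of $\S(\C)$ are split epic in $\P(\C)$, hence epic there, a fortiori epic in the subcategory $\S(\C)$.

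The substance is joint monicity of the projections. Suppose $u,v\colon\sp Z\to\sp X\otimes\sp Y$ in $\S(\C)$ satisfy $\pi_X\circ u=\pi_X\circ v$ and $\pi_Y\circ u=\pi_Y\circ v$; writing $\sp Z=(Z,r)$, these read $\pi_X u\ase r\pi_X v$ and $\pi_Y u\ase r\pi_Y v$. Since $u$ and $v$ are almost surely deterministic, so are their marginals (a composite of an almost surely deterministic map with the deterministic projections), and relative positivity gives $u\ase r\langle\pi_X u,\pi_Y u\rangle$ and $v\ase r\langle\pi_X v,\pi_Y v\rangle$. It therefore suffices to show that pairing respects almost sure equality argumentwise, i.e.\ that $a\ase r a'$ and $b\ase r b'$ imply $\langle a,b\rangle\ase r\langle a',b'\rangle$; combining this with the two relative-positivity equalities and transitivity of $\ase r$ yields $u\ase r v$, that is $u=v$ in $\S(\C)$.

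I expect this last step to be the main obstacle, because $\langle a,b\rangle=(a\otimes b)\circ\cpy_Z$ copies the shared input $r$ and thereby introduces correlation, so it is not a priori clear that an almost sure equality of a single marginal survives the copy. The route I would take is to substitute one argument at a time. To replace $a$ by $a'$, I regard the other factor as the state $s=\langle b,\id_Z\rangle\circ r$ on $Y\otimes Z$, whose $Z$-marginal is precisely $r$; using that $\C$ has conditionals one disintegrates $s$ along this marginal, and then feeding the $Z$-wire into the two almost surely equal channels $\langle a,\id_Z\rangle$ and $\langle a',\id_Z\rangle$ produces equal states. This reduces the claim to the single lemma that an almost sure equality taken with respect to a marginal transfers to any joint state carrying that marginal; iterating the substitution in both arguments then delivers the pairing identity. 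Taming the correlation created by $\cpy$ via disintegration is the only place where real work, and the hypothesis that $\C$ has conditionals, is actually needed.
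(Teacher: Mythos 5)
Your proof is correct, and for the most part it is the argument the paper intends: Proposition~\ref{prop:sc_monic} is stated without an explicit proof, but the immediately following Lemma~\ref{lem:relpos} runs precisely your joint-monicity argument (a morphism into a tensor with a deterministic marginal is, by relative positivity, almost surely the pairing of its marginals; two such morphisms with equal marginals therefore coincide), and the monoidal and epic claims are the same routine consequences of Proposition~\ref{prop:det_coiso} that you give. Where you diverge is the congruence step $a \ase{r} a'$, $b \ase{r} b'$ $\Rightarrow$ $\langle a,b\rangle \ase{r} \langle a',b'\rangle$, which you are right to flag --- the paper uses it silently in Lemma~\ref{lem:relpos} as well --- but your diagnosis that this is ``the only place where the hypothesis that $\C$ has conditionals is actually needed'' is misplaced. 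Your own reduction shows why: the joint state $s=\langle b,\id_Z\rangle\circ r$ is already presented in disintegrated form over its $Z$-marginal (with conditional $b$), so the appeal to disintegration is vacuous, and the substitution lemma follows from the comonoid axioms alone. Concretely, $a\ase{r}a'$ says $(\id_Z\otimes a)\circ\cpy_Z\circ r=(\id_Z\otimes a')\circ\cpy_Z\circ r$; postcomposing with $\cpy_Z\otimes\id$ and using coassociativity gives the same identity after the three-fold copy $(\cpy_Z\otimes\id_Z)\circ\cpy_Z$, and postcomposing one $Z$-wire with $b$, then repeating symmetrically with $b\ase{r}b'$, yields the pairing identity with no conditionals in sight. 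Conditionals genuinely enter earlier and elsewhere: relative positivity itself is a proof principle derived under the standing assumption that $\C$ has conditionals, as is the Bayesian-inverse/dagger machinery behind Proposition~\ref{prop:det_coiso}. So your proof goes through, but the ``real work'' is located one step earlier than you claim.
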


\begin{example}\label{ex:extension}
The projection map $\pi_1 : (X \otimes Y,p) \to (X,p_X)$ serves as our prototypical intuition for a map of sample spaces. We can think of it as describing a consistent \emph{extension} of the sample space $(X,p_X)$ to a larger sample space (this perspective is important in Section~\ref{sec:sheaves}).
\end{example}

\noindent In contrast to Proposition~\ref{prop:sc_monic}, the canonical projections are generally \emph{not} jointly monic in $\P(\C)$. This can however be remedied for channels that have a deterministic marginal:
 
\begin{lemma}\label{lem:relpos} 
In a commuting diagram in $\P(\C)$, 
\[\begin{tikzcd}
	&& {\sp \Omega} \\
	\\
	{\sp Y} && {\sp X \otimes \sp Y} && {\sp X}
	\arrow["g"', curve={height=12pt}, from=1-3, to=3-1]
	\arrow[shift right, from=1-3, to=3-3]
	\arrow["{\phi,\psi}", shift left, from=1-3, to=3-3]
	\arrow["f", curve={height=-12pt}, from=1-3, to=3-5]
	\arrow["{\pi_2}"', from=3-3, to=3-1]
	\arrow["{\pi_1}", from=3-3, to=3-5]
\end{tikzcd}\]
if either $f$ or $g$ is deterministic, then $\phi = \psi$. 
\end{lemma}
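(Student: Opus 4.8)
The plan is to reduce the claim directly to the relative positivity principle. Write $\sp \Omega = (\Omega, r)$, $\sp X = (X,p)$, $\sp Y = (Y,q)$, and fix channel representatives $\phi, \psi : \Omega \to X \otimes Y$ in $\C$. Commutativity of the diagram in $\P(\C)$ unpacks into the almost-sure marginal identities $\pi_1 \circ \phi \ase{r} f \ase{r} \pi_1 \circ \psi$ and $\pi_2 \circ \phi \ase{r} g \ase{r} \pi_2 \circ \psi$; that is, $\phi$ and $\psi$ have the same two marginals $f$ and $g$, up to $r$-almost-sure equality. The goal $\phi = \psi$ in $\P(\C)$ is by definition the statement $\phi \ase{r} \psi$.

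Next I would invoke the hypothesis that one of these marginals, say $f$ (the case of $g$ being symmetric, via swapping the two tensor factors), is $r$-almost-surely deterministic. Since almost-sure determinism is invariant under almost-sure equality, the actual first marginal $\pi_1 \circ \phi$ of the chosen representative is $r$-almost-surely deterministic as well, so $\phi$ has a deterministic marginal. Relative positivity then gives $\phi \ase{r} \langle \pi_1 \circ \phi, \pi_2 \circ \phi \rangle \ase{r} \langle f, g \rangle$, where the second step uses that the pairing $\langle -, - \rangle = ({-} \otimes {-}) \circ \cpy$ respects almost-sure equality. The identical argument applied to $\psi$ yields $\psi \ase{r} \langle f, g \rangle$, and combining the two gives $\phi \ase{r} \psi$, as required.

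The argument is essentially a single application of relative positivity, so I do not expect a hard computational step; the points requiring care are all bookkeeping. First, one must be careful that the equalities read off the diagram are equalities of morphisms in $\P(\C)$, hence almost-sure identities between representatives rather than on-the-nose equalities in $\C$, and that almost-sure determinism therefore transports from $f$ to $\pi_1 \circ \phi$. Second, one should confirm that $\langle f, g \rangle$ is a legitimate morphism $\sp \Omega \to \sp X \otimes \sp Y$: this is automatic, since $\phi \ase{r} \langle f, g \rangle$ forces $\langle f, g \rangle \circ r = \phi \circ r = p \otimes q$, so the pairing is state-preserving. This last identity is in fact the concrete content of positivity here, namely that a deterministic marginal is independent of the other marginal. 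Finally, I would make explicit that relative positivity applies symmetrically regardless of whether it is $f$ or $g$ that is deterministic, so that no generality is lost in the reduction to the case of $f$.
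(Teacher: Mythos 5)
Your proof is correct and follows essentially the same route as the paper: a single application of relative positivity to each of $\phi$ and $\psi$, showing both are $r$-almost-surely equal to the pairing $\langle f, g \rangle$ of the common marginals. The additional bookkeeping you supply (transport of almost-sure determinism across $\ase{r}$, state-preservation of $\langle f,g\rangle$, and the symmetry between the $f$ and $g$ cases) is accurate and merely makes explicit what the paper's terser argument leaves implicit.
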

\begin{proof}
This is an application of relative positivity. The maps $\phi$ has a deterministic marginal $f$, hence it is almost surely equal to the tupling of its marginals, i.e. $\phi \ase{\stat{\Omega}} \langle f,g \rangle$. The same is true for $\psi$, $\phi \ase{\stat{\Omega}} \psi$. 
\end{proof}

\section{Examples}\label{sec:examples}

\noindent We will now recover concrete descriptions of the categories $\P(\C)$ and $\S(\C)$ for our example Markov categories, up to equivalence of (dagger) categories. An important aid is Proposition~\ref{prop:faithful}, which enables us to drop almost sure equivalence classes in many cases.

\begin{definition}
A probability space $(\Omega,p)$ is \emph{faithful} if for all $f,g : \Omega \to X$, $f \ase p g$ implies $f = g$. 
\end{definition}
\noindent We consider the full subcategories $\P_f(\C)$ and $\S_f(\C)$ whose objects are faithful probability spaces. Those are considerably easier to work with, as we no longer need to take equivalence classes of morphisms, and almost sure determinism coincides with plain determinism.

\begin{proposition}\label{prop:faithful}
The following are equivalent
\begin{enumerate}
\item every probability space is isomorphic to a faithful one
\item the inclusions $\P_f(\C) \to \P(\C)$ and $\S_f(\C) \to \S(\C)$ are part of an equivalence of (dagger) categories
\item every state $p : I \to X$ has a \emph{split support} in the sense of \cite{fritz2023absolute,stein2021structural}, i.e. there are morphisms $i : S \to X$, $\pi : X \to S$ such that
\begin{itemize}
\item $\pi i = \id_S$
\item $i \pi \ase{p} \id_X$
\item for all $f,g : X \to Y$, $f \ase{p} g \Leftrightarrow fi=gi$
\end{itemize}
\end{enumerate}
\end{proposition}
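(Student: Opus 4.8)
The plan is to prove $(1)\Leftrightarrow(2)$ and $(1)\Leftrightarrow(3)$, so that all three are equivalent. The recurring tool throughout is the fact recorded after the definition of $\P(\C)$ that $p$-almost sure equality is a \emph{congruence} on the slice $I/\C$, so that composition in the quotient $\P(\C)$ is well-defined and automatically transports a.s.\ equalities; together with the observation after Proposition~\ref{prop:det_coiso} that every isomorphism of $\P(\C)$ is unitary and deterministic. For the soft part $(1)\Leftrightarrow(2)$ I would argue abstractly: the inclusion $\P_f(\C)\hookrightarrow\P(\C)$ is fully faithful since $\P_f(\C)$ is a full subcategory, hence it is an equivalence exactly when it is essentially surjective, which is precisely $(1)$; the same applies to $\S_f(\C)\hookrightarrow\S(\C)$, the connecting isomorphisms lying in $\S(\C)$ because isomorphisms of $\P(\C)$ are deterministic. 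To upgrade to a \emph{dagger} equivalence, assuming $(1)$ I choose for each space a faithful representative together with an isomorphism $\eta$; since every such $\eta$ is unitary, defining the quasi-inverse on morphisms by conjugation $f\mapsto\eta f\eta^{-1}$ gives a dagger-preserving functor (conjugation by unitaries commutes with $(-)^\dagger$, as $\eta^{-1}=\eta^\dagger$) with unitary natural isomorphism $\eta$. Conversely $(2)\Rightarrow(1)$ is immediate from essential surjectivity.

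The substance is $(1)\Leftrightarrow(3)$. For $(3)\Rightarrow(1)$, given a split support $i\colon S\to X$, $\pi\colon X\to S$ of $p$, I claim $(X,p)$ is isomorphic to the faithful space $(S,\pi\circ p)$. First I exhibit the isomorphism: $[\pi]_p$ and $[i]_{\pi p}$ are mutually inverse in $\P(\C)$, using $\pi i=\id_S$ on the nose for one round-trip and $i\pi\ase{p}\id_X$ for the other, with state preservation coming from $\pi\circ p=\pi p$ and $i\circ(\pi p)=(i\pi)\circ p=p$ (the last being the marginal consequence of $i\pi\ase{p}\id_X$).

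Then I show $(S,\pi p)$ is faithful. If $f\ase{\pi p}g$ for $f,g\colon S\to Y$, then $[f]_{\pi p}=[g]_{\pi p}$ in $\P(\C)$; composing on the right with $[\pi]_p$ yields $[f\pi]_p=[g\pi]_p$, i.e.\ $f\pi\ase{p}g\pi$, and the characterization in $(3)$ then gives $(f\pi)i=(g\pi)i$, which is $f=g$ since $\pi i=\id_S$. Notably this needs neither $\pi$ nor $i$ deterministic, as the transport of a.s.\ equality is handled entirely by well-definedness of composition in the quotient. For $(1)\Rightarrow(3)$ I run the same idea in reverse: by $(1)$ pick a faithful $(S,q)$ with $(X,p)\cong(S,q)$ and choose representatives $\pi\colon X\to S$, $i\colon S\to X$ with $\pi\circ p=q$, $i\circ q=p$. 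The inverse-pair equations read $i\pi\ase{p}\id_X$ and $\pi i\ase{q}\id_S$, the latter upgrading to $\pi i=\id_S$ by faithfulness of $(S,q)$; this gives the first two clauses of split support. For the third, $f\ase{p}g$ implies $fi\ase{q}gi$ (compose $[f]_p=[g]_p$ with $[i]_q$), hence $fi=gi$ by faithfulness; conversely $fi=gi$ gives $fi\pi=gi\pi$, and postcomposing $i\pi\ase{p}\id_X$ shows $f\ase{p}fi\pi=gi\pi\ase{p}g$.

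The main obstacle I anticipate is not a single calculation but keeping the quotient bookkeeping honest: each passage of an a.s.-equality between $X$ and $S$ must be justified as an instance of composition (or of pre/postcomposition preserving a.s.\ equality) in $\P(\C)$, and the dagger refinement of $(1)\Rightarrow(2)$ must genuinely invoke unitarity of the comparison isomorphisms rather than bare invertibility. Once these are in place, everything else reduces to the congruence property and the definition of split support.
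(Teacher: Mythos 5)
Your proposal is correct and follows essentially the same route as the paper's proof: $(1)\Leftrightarrow(2)$ via full faithfulness of the inclusions plus essential surjectivity, with the dagger refinement discharged by the automatic unitarity of isomorphisms in $\P(\C)$ (Proposition~\ref{prop:det_coiso}), and $(1)\Leftrightarrow(3)$ via exactly the paper's constructions — taking $\sigma = \pi p$ and running the same four verifications, with a.s.\ equalities transported by the congruence property of composition in $\P(\C)$. Your extra bookkeeping (state preservation of $[i]_{\pi p}$ via marginalization, conjugation by unitaries for the dagger quasi-inverse) merely makes explicit what the paper leaves implicit.
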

\begin{proof}
In Section~\ref{app:markov}.
\end{proof}

\noindent The `support-inclusion' $i : S \to X$ identifies the support of the distribution $p$, in the sense that $p$-almost sure equality can be tested by pulling back along $i$. A probability space is faithful if its distribution is supported on all of $X$. 

All our example categories except $\borelstoch$ are known to have split supports of states \cite{fritz2023absolute}; in those cases, we obtain the following descriptions:

\begin{proposition}[Discrete probability]\phantom{We have}
\begin{enumerate}
\item A probability space $(\Omega,p)$ in $\finstoch$ is faithful iff $p(\omega) > 0$ for all $\omega \in \Omega$
\item The category $\P(\finstoch)$ is dagger equivalent to the category of faithful probability spaces $(\Omega,p)$, where morphisms are stochastic matrices preserving the state. This was known as $\mathscr S^+(\catname{FinStoch})$ in \cite{parzygnat2024reversing}.
\item  The category $\S(\finstoch)$ is equivalent to $\catname{FinProb}$ of \cite{simpson:independence}: objects are faithful probability spaces, and morphisms $(\Omega,p) \to (\Omega',q)$ are (necessarily surjective) functions $f : \Omega \to \Omega'$ with
\[ q(\omega') = \sum_{\omega \in f^{-1}(\omega')} p(\omega) \]
\end{enumerate}
\end{proposition}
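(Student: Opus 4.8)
The plan is to handle the three parts in sequence, with part~(1) feeding the descriptions in parts~(2) and~(3). For part~(1) I would simply unfold the explicit description of $p$-almost sure equality in $\finstoch$ recalled earlier: $f \ase{p} g$ holds iff $f(y|\omega) = g(y|\omega)$ for every $\omega$ with $p(\omega) > 0$. If $p(\omega) > 0$ for all $\omega$, this forces $f = g$ as stochastic matrices, so $(\Omega,p)$ is faithful. Conversely, if $p(\omega_0) = 0$ for some $\omega_0$, I would exhibit non-faithfulness by choosing any target $X$ with $|X| \geq 2$ and two stochastic matrices $f, g : \Omega \to X$ that agree on every column $\omega \neq \omega_0$ but differ in the column $\omega_0$; these satisfy $f \ase{p} g$ yet $f \neq g$.

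For part~(2) the key tool is Proposition~\ref{prop:faithful}: it suffices to check that every state in $\finstoch$ has split support, which holds since $\finstoch$ is among the categories with split supports of states. Concretely, I would take $S = \{\omega : p(\omega) > 0\}$ with inclusion $i : S \to \Omega$ and the evident retraction $\pi : \Omega \to S$, and verify the three defining conditions. Proposition~\ref{prop:faithful} then yields a dagger equivalence $\P_f(\finstoch) \simeq \P(\finstoch)$. By part~(1) the faithful objects are exactly the fully-supported spaces, and since faithfulness removes the need for equivalence classes, the morphisms of $\P_f(\finstoch)$ are genuine state-preserving stochastic matrices, matching the stated category.

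For part~(3) the equivalence $\S(\finstoch) \simeq \S_f(\finstoch)$ is again immediate from Proposition~\ref{prop:faithful}, so the remaining work is to identify the morphisms of $\S_f(\finstoch)$ concretely. Since the objects are faithful, $p$-almost sure determinism coincides with plain determinism, and I would characterize the deterministic stochastic matrices by unfolding the copy-commutation condition: it forces $f(y|x)f(y'|x) = \delta_{y=y'}f(y|x)$, so every column of $f$ is a $0/1$ vector with a single $1$, i.e. $f$ is a function $\Omega \to \Omega'$. State preservation $f \circ p = q$ then reads exactly as the pushforward identity $q(\omega') = \sum_{\omega \in f^{-1}(\omega')} p(\omega)$. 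Surjectivity is automatic: faithfulness of $q$ gives $q(\omega') > 0$, so each fibre $f^{-1}(\omega')$ must be nonempty (equivalently, morphisms in $\S(\C)$ are split epic by Proposition~\ref{prop:det_coiso}).

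I expect the only genuinely delicate points to be the explicit verification that the support data $(S, i, \pi)$ satisfies the split-support axioms and that the induced equivalence respects the dagger; both are packaged by Proposition~\ref{prop:faithful}, so the rest of the argument is a direct unwinding of the $\finstoch$ definitions, the main care being to keep the bookkeeping between abstract co-isometries and concrete surjective functions aligned.
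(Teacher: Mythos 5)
Your proposal is correct and follows essentially the same route as the paper, which states this proposition without further proof as an instance of Proposition~\ref{prop:faithful} combined with the known existence of split supports in $\finstoch$; your concrete support $(S,i,\pi)$ and the unwinding of determinism (columns being $0/1$ point masses) and state preservation (pushforward, with surjectivity forced by faithfulness of $q$) are exactly the intended details. No gaps.
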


\begin{proposition}[Nondeterminism]\phantom{We have}
	\begin{enumerate}
		\item A sample space $(X,R)$ with $\emptyset \neq R \subseteq X$ in $\setmulti$ is faithful iff $R=X$. 
		\item The category $\P(\setmulti)$ is dagger equivalent to the category $\catname{TotRel}$ of nonempty sets and total relations. Dagger is the relational converse.
		\item The category $\S(\setmulti)$ is equivalent to the category $\surj$ of nonempty sets and surjections \cite{simpson:independence}.
	\end{enumerate}
\end{proposition}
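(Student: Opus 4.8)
The plan is to unfold all the $\setmulti$-specific data explicitly, reduce to the faithful subcategories via Proposition~\ref{prop:faithful}, and then read everything off by direct computation. First I would record the basic dictionary: since the monoidal unit $I$ is the one-point set, a state $p : I \to X$ is exactly a nonempty subset $R \subseteq X$, and unwinding the defining diagram of $p$-almost sure equality shows that $f \ase{R} g$ holds iff the relations $f,g : X \to Y$ agree on $R$, that is $\{y : (x,y) \in f\} = \{y : (x,y) \in g\}$ for every $x \in R$. Part (1) is then immediate: if $R = X$, agreement on $R$ is literal equality, so $(X,R)$ is faithful; conversely, if some $x_0 \in X \setminus R$ exists, choosing any $Y$ with two elements and two left-total relations that agree off $x_0$ but differ at $x_0$ witnesses a failure of faithfulness.

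For parts (2) and (3) I would first invoke the fact (cited above) that $\setmulti$ has split supports of states, so by Proposition~\ref{prop:faithful} the inclusions $\P_f(\setmulti) \hookrightarrow \P(\setmulti)$ and $\S_f(\setmulti) \hookrightarrow \S(\setmulti)$ are (dagger) equivalences; it therefore suffices to describe the faithful subcategories, where almost sure equality collapses to genuine equality of relations. By part (1) the faithful objects are exactly the pairs $(X,X)$, which I identify with nonempty sets $X$. A morphism $(X,X) \to (Y,Y)$ in $\P_f(\setmulti)$ is a left-total relation $f \subseteq X \times Y$ (a morphism of $\setmulti$) subject to state preservation $f \circ p = q$; since $p$ is the full subset $X$, this says the range of $f$ is all of $Y$, i.e.\ $f$ is right-total. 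Hence the morphisms are precisely the relations that are total in both directions, which is the defining data of $\catname{TotRel}$, and composition matches on the nose. To identify the dagger with relational converse $f \mapsto f^\op$, I would check that $f^\op$ satisfies the defining equation~\eqref{eq:def_dagger}: the joint of $(x, f(x))$ under the full state is just $f \subseteq X \times Y$, its swap is $f^\op \subseteq Y \times X$, and this is exactly the joint computed from $f^\op$, so $f^\dagger = f^\op$; right-totality of $f$ guarantees $f^\op$ is again a valid (left-total, state-preserving) morphism.

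For part (3) I would use Proposition~\ref{prop:det_coiso} to reduce to identifying the deterministic morphisms inside $\P_f(\setmulti) \simeq \catname{TotRel}$. Writing $F_x = \{y : (x,y) \in f\}$, the determinism equation $\cpy_Y \circ f = (f \otimes f) \circ \cpy_X$ evaluates, for each $x$, to the assertion that the diagonal of $F_x \times F_x$ equals all of $F_x \times F_x$, which holds iff $|F_x| \le 1$. Together with left-totality $|F_x| \ge 1$ this forces $|F_x| = 1$, i.e.\ $f$ is a function; combined with the surjectivity inherited from $\catname{TotRel}$, the deterministic morphisms are exactly the surjective functions. Thus $\S_f(\setmulti)$ is the category $\surj$ of nonempty sets and surjections, completing the identification.

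I expect the genuinely load-bearing steps to be the two unwindings of string-diagram equations in $\setmulti$: the translation of $p$-almost sure equality into ``agreement on $R$'' (which drives part (1) and the reduction) and the computation of the determinism condition into the pointwise constraint $|F_x| \le 1$. Neither is deep, but both require care that the relational composites $\cpy$, $f \otimes f$, and the projections are interpreted correctly; the only other point needing a moment's attention is verifying that relational converse lands back among state-preserving total relations, which is exactly where right-totality (surjectivity) of the original morphism is used.
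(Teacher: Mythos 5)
Your proposal is correct and follows essentially the route the paper intends: the paper states this proposition without a written-out proof, deriving it from the cited existence of split supports in $\setmulti$ and the reduction of Proposition~\ref{prop:faithful} to the faithful subcategories, which is exactly your strategy. Your explicit unwindings (states as nonempty subsets, $f \ase{R} g$ as agreement on $R$, state preservation as right-totality, determinism as $|F_x| \le 1$, and $f^\dagger = f^{\mathrm{op}}$ via equation~\eqref{eq:def_dagger}) correctly fill in the computations the paper leaves implicit.
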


\noindent For Gaussian probability, we obtain an elegant characterization of sample spaces in terms of the following concepts of linear algebra. Recall that a matrix $A \in \R^{n \times m}$ is called
\begin{enumerate}
\item an \emph{isometry} if $||Ax||=||x||$ for all $x \in \R^m$ where $||-||$ is Euclidean distance. Equivalently, $A^TA=I_m$.
\item a \emph{co-isometry} if $A^T$ is an isometry (i.e. $AA^T = I_n$)
\item a \emph{contraction} if $||Ax|| \leq ||x||$ for all $x \in \R^m$. 
\end{enumerate}
Isometries, co-isometries and contractions form respective subcategories $\iso,\coiso,\con$ of the category $\catname{Mat}$ whose objects are natural numbers and morphisms are matrices. Transposition defines an isomorphism $\iso^\op \cong \coiso$, and a $\dagger$-structure on $\con$.

\begin{proposition}[Gaussian probability]\label{prop:s_gauss} \phantom{We have}
\begin{enumerate}
\item A sample space $(\R^n,\mathcal N(\mu,\Sigma))$ in $\gauss$ is faithful iff $\Sigma$ has full rank, i.e. is positive definite. Every such sample space is isomorphic to a \emph{standard sample space} of the form $(\R^n, \mathcal N(0,I_n))$ by means of Cholesky decomposition, i.e. a factorization $\Sigma = LL^T$.
\item The category $\P(\gauss)$ is dagger equivalent to $\con$.
\item The category $\S(\gauss)$ is equivalent to $\coiso$. 
\end{enumerate}
\end{proposition}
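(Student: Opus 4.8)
The plan is to reduce every Gaussian probability space to a \emph{standard} sample space $(\R^n,\N(0,I_n))$ and then identify morphisms of standard spaces with contraction matrices. The one computation I need at the outset is almost sure equality in $\gauss$: a state $I\to\R^n$ is a Gaussian $\N(\mu,\Sigma)$, and for $f=(A,b,\Xi)$ the joint state $\langle\id,f\rangle\circ\N(\mu,\Sigma)$ is Gaussian with mean $(\mu,A\mu+b)$ and covariance $\left(\begin{smallmatrix}\Sigma & \Sigma A^{T}\\ A\Sigma & A\Sigma A^{T}+\Xi\end{smallmatrix}\right)$. Comparing two such joints shows that $f\ase{\N(\mu,\Sigma)}g$ for $g=(A',b',\Xi')$ holds iff $A\Sigma=A'\Sigma$, $A\mu+b=A'\mu+b'$ and $A\Sigma A^{T}+\Xi=A'\Sigma A'^{T}+\Xi'$.

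Part 1. If $\Sigma$ is positive definite then $A\Sigma=A'\Sigma$ forces $A=A'$, whereupon the remaining two equations force $b=b'$ and $\Xi=\Xi'$; hence the space is faithful. Conversely, if $\Sigma$ is singular I would pick $B\neq 0$ with $B\Sigma=0$ and observe that $(A,b,\Xi)$ and $(A+B,\,b-B\mu,\,\Xi)$ are distinct but $\N(\mu,\Sigma)$-almost surely equal---the covariance condition survives because $B\Sigma=0$ annihilates every cross term---so the space is not faithful. For the normal form, a Cholesky factorization $\Sigma=LL^{T}$ with $L$ invertible turns the affine map $(L,\mu,0)\colon(\R^n,\N(0,I_n))\to(\R^n,\N(\mu,\Sigma))$ into a state-preserving isomorphism of $\gauss$ with inverse $(L^{-1},-L^{-1}\mu,0)$; being an isomorphism it is unitary and deterministic, so it is also an isomorphism in $\S(\gauss)$.

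Part 2. Since $\gauss$ has split supports of states \cite{fritz2023absolute}, Proposition~\ref{prop:faithful} makes $\P_f(\gauss)\hookrightarrow\P(\gauss)$ a dagger equivalence, and Part 1 shows every faithful space is isomorphic to a standard one; it therefore suffices to study the full subcategory on the standard spaces. There almost sure equality is plain equality, so a morphism $(\R^m,\N(0,I_m))\to(\R^n,\N(0,I_n))$ is an honest state-preserving $f=(A,b,\Xi)$, and $f\circ\N(0,I_m)=\N(0,I_n)$ forces $b=0$ and $\Xi=I_n-AA^{T}$. Positive semidefiniteness of $\Xi$ is exactly $AA^{T}\preceq I_n$, i.e.\ $A$ is a contraction, and $\Xi$ is then determined by $A$. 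I would define $F$ by $(\R^n,\N(0,I_n))\mapsto n$ and $f\mapsto A$; the composition rule of $\gauss$ gives $\Sigma_g+B\Sigma_f B^{T}=I_k-(BA)(BA)^{T}$, so $F$ is a functor acting as matrix multiplication, and it is essentially surjective, full and faithful, hence an equivalence $\P(\gauss)\simeq\con$.

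It remains to match the daggers, which I expect to be the one genuinely delicate point. Rather than invoke the general Gaussian conditioning formula, I would use that the dagger on $\P(\C)$ is the swap of couplings and compute the coupling associated with the matrix $A$: it is the Gaussian on $\R^{m+n}$ with mean $0$ and covariance $\left(\begin{smallmatrix}I_m & A^{T}\\ A & I_n\end{smallmatrix}\right)$. This matrix is manifestly invariant under simultaneously swapping the two blocks and replacing $A$ by $A^{T}$, so the Bayesian inverse of $f$ is the standard morphism with matrix $A^{T}=F(f)^{\dagger}$; thus $F$ is dagger-preserving and the equivalence is a dagger equivalence. Finally, for Part 3, Proposition~\ref{prop:det_coiso} identifies $\S(\gauss)$ with the co-isometries of $\P(\gauss)$, and under $F$ the condition $f\circ f^{\dagger}=\id$ becomes $AA^{T}=I_n$, i.e.\ $A\in\coiso$; restricting $F$ therefore yields the (non-dagger) equivalence $\S(\gauss)\simeq\coiso$.
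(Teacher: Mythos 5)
Your proof is correct, and its overall route coincides with the paper's: reduce to standard sample spaces $(\R^n,\N(0,I_n))$ via Cholesky, identify state-preserving channels between standard spaces with matrices $A$ satisfying $AA^T + \Sigma = I_n$ (hence contractions), show the dagger is transposition, and cut down to co-isometries for $\S(\gauss)$ using Proposition~\ref{prop:det_coiso}. You differ from the paper in two local steps, both legitimately. First, for faithfulness the paper cites the split-support inclusion of $\N(\mu,\Sigma)$ onto the affine subspace $\mu + \mathrm{im}(\Sigma)$ from earlier work, which simultaneously yields that \emph{every} Gaussian space is isomorphic to a faithful one; you instead compute $p$-almost-sure equality directly ($A\Sigma = A'\Sigma$, $A\mu+b=A'\mu+b'$, matching output covariances) and exhibit the explicit counterexample $(A+B,\,b-B\mu,\,\Xi)$ with $B\Sigma=0$ in the singular case. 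Your computation is self-contained and arguably more informative, and you correctly recover the missing ``isomorphic to a faithful space'' ingredient by citing split supports of states in $\gauss$ before invoking Proposition~\ref{prop:faithful}, so nothing is lost. Second, for the Bayesian inverse the paper solves the defining equation \eqref{eq:def_dagger} with the ansatz $f^\dagger(y)=By+\N(0,\Phi)$, obtaining $B=A^T$ and $\Phi=I_m-A^TA$; you instead pass to the couplings picture, where the joint has covariance with off-diagonal blocks $A^T$ and $A$ and unit diagonal blocks, and read off the dagger as the block swap. Since the dagger on couplings literally is the swap, this is a slightly slicker derivation of the same identity, and it makes dagger-preservation of your functor $F$ immediate. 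The remaining steps (functoriality via $\Sigma_g + B\Sigma_f B^T = I_k - (BA)(BA)^T$, essential surjectivity, and $ff^\dagger=\id$ translating to $AA^T=I_n$) match the paper's argument; your appeal to the well-known equivalence between $I_n - AA^T$ positive semidefinite and $A$ being a contraction is the one point the paper spells out in full where you assert it, which is acceptable.
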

\begin{proof}[Proof sketch]
Because of state preservation, a measure preserving Gaussian channel between standard sample spaces $(\R^m,\N(0,I_m)) \to (\R^n,\N(0,I_n))$ is a tuple $(A,0,\Sigma)$ with $AA^T + \Sigma = I_n$. This condition is equivalent to $A$ being a contraction, and the channel is deterministic iff $\Sigma = 0$, i.e. $AA^T = I_n$, meaning $A$ is a coisometry. The full proof is elaborated in Section~\ref{sec:gaussian}.
\end{proof}

\begin{proposition}[Fresh name generation]\phantom{We have}
\begin{enumerate}
\item A sample space $(X,W)$ in $\strongname$ consists of a strong nominal set $X$ and an orbit $W \subseteq X$. The sample space is faithful iff $W = X$
\item Each sample space is isomorphic to one of the form $(\rep n, \rep n)$ for $n \in \mathbb N$
\item The category $\S(\strongname)$ is equivalent to $\inj^\op$, the opposite category of finite sets and injections.
\end{enumerate}
\end{proposition}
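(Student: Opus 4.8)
The plan is to mirror the strategy used for the earlier examples: first pin down the states of $\strongname$ and the faithfulness criterion (part 1), then use split supports to reduce to faithful sample spaces and classify these up to isomorphism (part 2), and finally read off the hom-sets to exhibit an equivalence with $\inj^\op$ (part 3). Throughout I would lean on the description of $\strongname$ and the name-generation monad $N$ given in Section~\ref{sec:nom}. For part 1, a state $p : I \to X$ is a Kleisli map, i.e.\ an equivariant element of $N(X)$; since $I=1$ has empty support, such an element is supported by $\emptyset$, and by the description of $N$ it has the form $\name{\supp(x)}{x}$, the fresh allocation binding the whole support of some $x \in X$. Two elements give the same equivariant element of $N(X)$ exactly when they share an orbit, so states correspond bijectively to orbits $W = \orb(x) \subseteq X$, which is the content of ``$(X,W)$''. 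Faithfulness then reduces, as noted after Proposition~\ref{prop:faithful}, to the distribution being supported on all of $X$; since the support of $p$ is precisely $W$, this says $W = X$.

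For part 2, all states in $\strongname$ have split support \cite{fritz2023absolute,stein2021structural}, so by Proposition~\ref{prop:faithful} every sample space is isomorphic to a faithful one, and I may restrict to objects $(X,X)$ with $X$ a single orbit. Here I would invoke the structure theory of strong nominal sets \cite{pitts2013nominal}: strong support is unique and stabilizers are as small as possible, so a single-orbit strong nominal set whose elements have support of size $n$ is isomorphic (as a nominal set) to $\rep n$, the set of $n$-tuples of distinct names. This isomorphism carries the fresh state to the canonical fresh $n$-tuple $\name{a_1 \cdots a_n}{(a_1,\dots,a_n)}$, giving $(X,X) \cong (\rep n, \rep n)$ in $\S(\strongname)$ and establishing part 2.

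For part 3, it remains to compute morphisms between the normal forms. By faithfulness almost sure equality is plain equality (Proposition~\ref{prop:faithful}), so a morphism $(\rep m, \rep m) \to (\rep n, \rep n)$ in $\S(\strongname)$ is a genuine deterministic equivariant map $f : \rep m \to \rep n$ preserving the state. Equivariance forces $\supp(f(\vec a)) \subseteq \supp(\vec a)$, so no fresh name can be produced and each output coordinate must reuse an input name; writing $f(a_1,\dots,a_m) = (a_{\phi(1)},\dots,a_{\phi(n)})$, the requirement that the output be a tuple of \emph{distinct} names forces $\phi : \{1,\dots,n\} \to \{1,\dots,m\}$ to be injective, while state preservation holds automatically since allocating $m$ fresh names and retaining $n$ of them equals allocating $n$ fresh names. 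Thus the hom-set is naturally identified with $\inj(\{1,\dots,n\},\{1,\dots,m\})$, contravariantly in the indices. Verifying that composition of sample-space maps corresponds to composition of injections in the opposite direction then yields a fully faithful, essentially surjective functor onto $\inj^\op$, hence the equivalence.

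The main obstacle I anticipate is the morphism analysis of part 3: justifying rigorously, from the concrete description of $N$ and the definition of determinism in the Kleisli category $\strongname$, that a deterministic equivariant channel can neither allocate fresh names nor duplicate-to-coincidence, so that it must reuse distinct inputs and thereby corresponds to an injection. The structure theorem for single-orbit strong nominal sets underlying part 2 is standard nominal machinery, but also needs care to state correctly in the \emph{strong} setting; I would handle both by referring to the detailed treatment in Section~\ref{sec:nom}.
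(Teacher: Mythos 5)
Your proposal is correct and follows essentially the same route as the paper's proof in Section~\ref{sec:nom}: states are classified as orbits via the empty-support elements $\name{\supp(x)}{x}$ of $N(X)$, faithfulness reduces to $W = X$ via a split support (which the paper makes explicit by exhibiting the projection $\pi : X \to N(W)$), Proposition~\ref{prop:faithful} together with the representability of single-orbit strong nominal sets (Lemma~\ref{lemma:atomic_def}) yields the normal forms $(\rep n, \rep n)$, and the hom-sets are identified with injections. The only difference is presentational: where you compute the hom-set by hand---arguing that equivariance forces each output coordinate to reuse a distinct input name---the paper simply cites the Yoneda-type correspondence $\nom(\rep m, \rep n) \cong \inj(n,m)$ of \eqref{eq:nom_yoneda}, which carries exactly the content of your elementwise analysis (and, like your sketch, it leaves the point that deterministic Kleisli maps cannot allocate fresh names implicit, so the obstacle you flag is no worse than in the paper's own treatment).
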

\begin{proof}
Elaborated in Section~\ref{sec:nom}.
\end{proof}

\noindent We remark that in $\borelstoch$, not every sample space is isomorphic to a faithful one, so we don't obtain a simplified description via Proposition~\ref{prop:faithful}. We simply work with the definition of $\S(\borelstoch)$ as-is. We remark that $\S(\borelstoch)$ is equivalent to the category $\mathbb{SBP}_0$ considered by Simpson in \cite{simpson:atomic}. The latter category is defined slightly differently, and the two definition can be shown equivalent because $\borelstoch$ is \emph{a.s.-compatibly representable} in the sense of \cite{fritz2023representable}.

\section{The Independence Structure on Sample Spaces}\label{sec:independence_structure}

\noindent Now that we have introduced and characterized categories of sample spaces $\S(\C)$, we can study notions of conditional independence on them. We will recall Simpson's axioms for independent pullbacks, show that they apply to our categories $\S(\C)$, and recover the independence structures given in the examples in \cite{simpson:independence}. The independence structure will play a crucial role when using $\S(\C)$ as a site for probability sheaves in Section~\ref{sec:sheaves}.

\textbf{Notation:} In what follows, $\C$ is always a Markov category with conditionals. We will generally denote channels (morphisms in $\P(\C)$) by Greek letters $\phi,\psi$, and maps (i.e. deterministic channels, morphisms in the subcategory $\S(\C)$) by Latin letters $f,g$. The letter $\pi$ also always denotes a map. 


We now consider categories $\S$ equipped with a distinguished collection of commutative squares that are called ``independent``. A commuting square is an \emph{independent pullback} if it is independent, and it satisfies the universal property of a pullback with respect to other independent squares, i.e. whenever the outer kite is independent, there exists a unique mediating map 
\[\begin{tikzcd}
	{X'} \\
	& X && Y \\
	\\
	& Z && W
	\arrow["{\exists!}"{description}, dotted, from=1-1, to=2-2]
	\arrow["{f'}", curve={height=-6pt}, from=1-1, to=2-4]
	\arrow["{g'}"', curve={height=6pt}, from=1-1, to=4-2]
	\arrow["f", from=2-2, to=2-4]
	\arrow["g"', from=2-2, to=4-2]
	\arrow["u", from=2-4, to=4-4]
	\arrow["v"', from=4-2, to=4-4]
\end{tikzcd}\]

\begin{definition}[\!\!\!{\cite{simpson:atomic}}]
An \emph{independent pullback structure} on a category $\S$ consists of a collection of commuting squares called independent, satisfying the following axioms
\begin{description}
\item[\textbf{(IP1)}] every square of the following form is independent
\[\begin{tikzcd}
	X & Y \\
	Z & Z
	\arrow[from=1-1, to=1-2]
	\arrow[from=1-1, to=2-1]
	\arrow[from=1-2, to=2-2]
	\arrow["{\id_Z}", from=2-1, to=2-2]
\end{tikzcd}\]
\item[\textbf{(IP2)}] if the left square is independent, so is the right
\[\begin{tikzcd}
	X & Y && X & Z \\
	Z & W && Y & W
	\arrow["f", from=1-1, to=1-2]
	\arrow["g"', from=1-1, to=2-1]
	\arrow["v", from=1-2, to=2-2]
	\arrow["g", from=1-4, to=1-5]
	\arrow["f"', from=1-4, to=2-4]
	\arrow["u", from=1-5, to=2-5]
	\arrow["u"', from=2-1, to=2-2]
	\arrow["v"', from=2-4, to=2-5]
\end{tikzcd}\]
\item[\textbf{(IP3)}] if (A) and (B) are independent, then so is (AB)
\[ 
\adjustbox{scale=0.5}{%
	\begin{tikzcd}
	\bullet && \bullet && \bullet \\
	& {(A)} && {(B)} \\
	\bullet && \bullet && \bullet
	\arrow[from=1-1, to=1-3]
	\arrow[from=1-1, to=3-1]
	\arrow[from=1-3, to=1-5]
	\arrow[from=1-3, to=3-3]
	\arrow[from=1-5, to=3-5]
	\arrow[from=3-1, to=3-3]
	\arrow[from=3-3, to=3-5]
\end{tikzcd}}\]
\item[\textbf{(IP4)}] If (AB) is independent and (B) is an independent \emph{pullback}, then (A) is independent
\item[\textbf{(IP5)}] Every cospan $Y \xrightarrow{u} W \xleftarrow{v} Z$ has a completion to an independent pullback
\end{description}
\end{definition}

\noindent Our central example for a category with an independent pullback structure will be the category of sample spaces $\S(\C)$. We will define the notion of independence now, but postpone the verification of the axioms (IP1)-(IP5) to Section~\ref{subsec:ip} until we have developed some theory around independent squares.

\begin{definition}
We call a commutative square in $\S(\C)$
\begin{equation}\begin{tikzcd}
	{\sp \Omega} && {\sp X} \\
	\\
	{\sp Y} && {\sp Z}
	\arrow["f", from=1-1, to=1-3]
	\arrow["g"', from=1-1, to=3-1]
	\arrow["d"{description}, from=1-1, to=3-3]
	\arrow["u", from=1-3, to=3-3]
	\arrow["v"', from=3-1, to=3-3]
\end{tikzcd} \label{eq:square} \end{equation}
\emph{independent} if the maps $f,g$ are conditionally independent given $d$ in the sense of \cite[12.1]{fritz2019synthetic}, written $f \bot g \s d$. That means there exist channels $\phi : Z \to X$ and $\psi : Z \to Y$ in $\C$ such that
\begin{equation} \tikzfigscaled{indep_square} \label{eq:def_indep_square} \end{equation}
\end{definition}

\noindent We can simplify this condition considerably as follows:

\begin{lemma}\label{lem:indep}
For any commutative square as in \eqref{eq:square}, the following composites (I)-(VI) are equal
\begin{equation*} \tikzfigscaled{indep_characterization} \end{equation*}
for arbitrary choices of Bayesian inverses. Furthermore, the square \eqref{eq:square} is independent if and only if the joint state
\[ \tikzfigscaled{indep_lhs} \]
is equal to any (equivalently: all) of the composites (I)-(VI).
\end{lemma}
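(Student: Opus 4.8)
The plan is to carry out the entire argument inside the dagger category $\P(\C)$, where Bayesian inversion is an honest contravariant functor, so that the ``arbitrary choices of Bayesian inverse'' collapse to a single well-defined dagger and the listed expressions become genuine morphism equalities (which then descend to the a.s.\ statements in $\C$). Throughout I will use three facts about the maps $f,g,u,v,d$ of the square, all valid because these are co-isometries (Proposition~\ref{prop:det_coiso}): contravariance $d^\dagger = (u\circ f)^\dagger = f^\dagger\circ u^\dagger = (v\circ g)^\dagger = g^\dagger\circ v^\dagger$; the cancellations $f\circ f^\dagger = \id$, $g\circ g^\dagger = \id$, $d\circ d^\dagger = \id$; and the fact that deterministic maps are comonoid homomorphisms, so copies may be pushed through them, e.g.\ $\cpy\circ d = (d\otimes d)\circ\cpy$.

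For the first assertion I would take as the reference composite the ``canonical independent coupling through $\sp Z$'', the channel obtained by copying the state on $\sp Z$ and applying $u^\dagger$ on one leg and $v^\dagger$ on the other. Each of (I)--(VI) is then obtained from this one by a combination of the three facts: replacing $u^\dagger$ by $f\circ d^\dagger$ (legitimate since $f\circ d^\dagger = f\circ f^\dagger\circ u^\dagger = u^\dagger$) and $v^\dagger$ by $g\circ d^\dagger$, and relocating the copy map between $\sp Z$ and $\sp\Omega$ using the comonoid-homomorphism property of $d$. Since every such rewrite is a provable equality in $\P(\C)$, all six expressions coincide there; in $\C$ they agree up to the relevant a.s.\ equality, independently of which Bayesian inverses were chosen. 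This part is essentially bookkeeping once the three facts are in place.

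The second assertion carries the content. The ``if'' direction is immediate: if the joint state equals the canonical composite, then $\phi = u^\dagger$ and $\psi = v^\dagger$ are witnesses of $f\bot g\s d$. For ``only if'', suppose $f\bot g\s d$ with witnesses $\phi:Z\to X$ and $\psi:Z\to Y$. Marginalizing the defining equation \eqref{eq:def_indep_square} by discarding the $\sp Y$-output and retaining the $\sp X$- and $\sp Z$-outputs yields that the joint of $(f,d)$ equals $(\phi\otimes\id_Z)\circ\cpy\circ p_Z$, where $p_Z = d\circ p$. On the other hand, because $u\circ f = d$ with $f$ deterministic, this same joint rewrites as $\langle f, u\circ f\rangle\circ p = \langle\id_X,u\rangle\circ f\circ p = \langle\id_X,u\rangle\circ p_X$, i.e.\ the graph of $u$; and the graph of $u$ is by definition reconstructed from its $\sp Z$-marginal as $(u^\dagger\otimes\id_Z)\circ\cpy\circ p_Z$. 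Uniqueness of the Bayesian inverse up to a.s.\ equality then forces $\phi\ase{p_Z} u^\dagger$, and symmetrically $\psi\ase{p_Z} v^\dagger$; substituting back shows the joint state equals composite (I).

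The main obstacle I anticipate is the a.s.\ bookkeeping in this last step: one must verify that marginalizing the conditional-independence equation genuinely produces the graph of $u$ (using determinism of $f$ to rewrite $\langle f, u\circ f\rangle$ as $\langle\id_X,u\rangle\circ f$) and then invoke uniqueness of Bayesian inverses relative to the correct state $p_Z = d\circ p$ rather than $p$. Everything else reduces to the diagrammatic identities for the dagger, cancellation of co-isometries, and naturality of copying along deterministic maps.
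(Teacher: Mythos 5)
Your first claim and your ``only if'' direction are sound and essentially reproduce the paper's own argument: the paper proves $(I)=(II)$ via the very cancellations you list, $u^\dagger = ff^\dagger u^\dagger = f(uf)^\dagger = fd^\dagger$ and $v^\dagger = gd^\dagger$, and its forward implication is precisely your marginalization step, identifying the witnesses $\phi,\psi$ of \eqref{eq:def_indep_square} with choices of Bayesian inverse of $u$ and $v$ (via a.s.\ uniqueness of conditionals) and then discarding the middle wire.

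The genuine gap is the direction you dismiss as immediate. Independence is \emph{defined} by the three-output equation \eqref{eq:def_indep_square}, in which the conditioning map $d$ is displayed on a middle wire; your hypothesis is only the two-wire marginal $\langle f,g\rangle\circ p_\Omega=(u^\dagger\otimes v^\dagger)\circ\cpy\circ p_Z$. Asserting that $\phi=u^\dagger$, $\psi=v^\dagger$ are witnesses is asserting the three-wire identity $\langle f,d,g\rangle\circ p_\Omega=(u^\dagger\otimes\id_Z\otimes v^\dagger)\circ(\cpy\otimes\id_Z)\circ\cpy\circ p_Z$, and this does not follow from your three listed facts. Determinism of $f$ (with $d=uf$) plus the hypothesis gets you to $\langle f,d,g\rangle\circ p_\Omega=\bigl(\bigl(\langle\id_X,u\rangle\circ u^\dagger\bigr)\otimes v^\dagger\bigr)\circ\cpy\circ p_Z$, but the remaining step $\langle\id_X,u\rangle\circ u^\dagger\ase{p_Z}\langle u^\dagger,\id_Z\rangle$ --- moving the $Z$-output from being recomputed after $u^\dagger$ to being copied before it --- is not a dagger-contravariance or co-isometry-cancellation identity, because copying is not natural at the nondeterministic channel $u^\dagger$. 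This is exactly the paper's step $(*)$, and it needs \emph{relative positivity}: the channel $\langle\id_X,u\rangle\circ u^\dagger : Z \to X\otimes Z$ has the a.s.-deterministic marginal $uu^\dagger\ase{p_Z}\id_Z$, hence is a.s.\ equal to the pairing of its marginals. Note that you anticipated the ``a.s.\ bookkeeping'' obstacle on the wrong side: your ``only if'' argument is the easy direction, while the ``if'' direction carries the lemma's actual content (that the displayed middle wire can be reconstructed from the two-wire state); claiming it outright presupposes the equivalence of the two- and three-wire formulations, which is what is to be proved. With the relative-positivity step inserted, your proof closes and coincides with the paper's.
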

\begin{proof}
The equality of (I)-(VI) is straightforward calculation (see Section~\ref{app:markov}). We prove that independence is equivalent to the equation $\langle f, g \rangle \circ p_\Omega = (I)$. 
\begin{enumerate}
\item Assume the square is independent, with channels $\phi,\psi$ witnessing equation \eqref{eq:def_indep_square}. By marginalization, one sees that $\phi$ is a choice of Bayesian inverse $u^\dagger$ (and $\psi$ is $v^\dagger$). By marginalizing the middle wire, we obtain as desired
\begin{equation*} \tikzfigscaled{indep_square_simplified} \label{eq:I} \end{equation*}
\item Conversely, assuming equation \eqref{eq:I}, we construct the following factorization
\[ \tikzfigscaled{indep_square_cond_lemma} \]
where the equations used are determinism of $f$, the hypothesis, and the step $(*)$ asserting that $\langle \id_X, u \rangle u^\dagger \ase{\stat Z} \langle u^\dagger, \id_Z \rangle$ by relative positivity.\qedhere
\end{enumerate}
\end{proof}

\noindent Matthew Di Meglio and Paolo Perrone have been working on a way to characterize independence in a purely dagger-categorical setup\footnote{Personal communication.}. We include their characterization here and provide a proof for reference.
\begin{proposition}\label{prop:dagger_characterization}
A commutative square \eqref{eq:square} in $\S(\C)$ is independent if and only if the following equation holds in $\P(\C)$
\begin{equation} gf^\dagger = v^\dagger u \label{eq:dagger} \end{equation}
\end{proposition}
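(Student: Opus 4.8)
The plan is to reduce equation~\eqref{eq:dagger} to the state-equation characterization of independence supplied by Lemma~\ref{lem:indep}, by passing through the coupling description of $\P(\C)$. Since $\P(\C)$ is dagger equivalent to the category of couplings, a morphism $h : \sp X \to \sp Y$ of $\P(\C)$ is determined by its associated coupling, which is the joint state $\langle \id_X, h \rangle \circ p_X$ on $X \otimes Y$. Thus \eqref{eq:dagger} holds if and only if
\[ \langle \id_X, g \circ f^\dagger \rangle \circ p_X = \langle \id_X, v^\dagger \circ u \rangle \circ p_X, \]
and it suffices to compute each of these two states and compare them with Lemma~\ref{lem:indep}.

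For the left-hand side I would pull $g$ out of the pairing and use the defining equation~\eqref{eq:def_dagger} of the Bayesian inverse $f^\dagger$ (together with a swap) to rewrite $\langle \id_X, f^\dagger \rangle \circ p_X = \langle f, \id_\Omega \rangle \circ p_\Omega$; applying $\id_X \otimes g$ then collapses the expression to the joint state $\langle f, g \rangle \circ p_\Omega$, which is exactly the left-hand state appearing in Lemma~\ref{lem:indep}. For the right-hand side the same manoeuvre applied to $u$ gives $\langle \id_X, u \rangle \circ p_X = \langle u^\dagger, \id_Z \rangle \circ p_Z$, and composing with $\id_X \otimes v^\dagger$ yields $\langle u^\dagger, v^\dagger \rangle \circ p_Z = (u^\dagger \otimes v^\dagger) \circ \cpy_Z \circ p_Z$. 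This last state is precisely one of the equal composites (I)--(VI) of Lemma~\ref{lem:indep}, namely the one obtained by copying $p_Z$ and applying the two Bayesian inverses.

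Combining, \eqref{eq:dagger} is equivalent to the equality $\langle f, g \rangle \circ p_\Omega = (u^\dagger \otimes v^\dagger) \circ \cpy_Z \circ p_Z$, which is exactly the independence criterion of Lemma~\ref{lem:indep}, and the proof closes. I expect the main obstacle to be bookkeeping rather than conceptual: one must be careful with the direction conventions for Bayesian inverses and with the swaps they introduce, and must confirm that the composite $\langle u^\dagger, v^\dagger\rangle \circ p_Z$ obtained on the right genuinely matches one of the normalized composites (I)--(VI) rather than a variant. Since Lemma~\ref{lem:indep} asserts that all of (I)--(VI) coincide and that independence is equivalent to equality with \emph{any} one of them, identifying it with the copy-and-invert composite is enough.
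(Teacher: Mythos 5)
Your proposal is correct and takes essentially the same route as the paper: both arguments unfold equation~\eqref{eq:dagger} via the defining property of Bayesian inverses into an equality of joint states and then invoke Lemma~\ref{lem:indep}, the only cosmetic difference being that you compute both couplings explicitly and match the right-hand side against composite (I), whereas the paper's diagram chase lands on criterion (III). Your packaging through couplings also handles both directions at once, since equality in $\P(\C)$ is by definition equality of the associated joint states, matching the paper's closing remark that the same computation gives $gf^\dagger \ase{\stat X} v^\dagger u$ for an independent square.
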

\begin{proof}
Assume \eqref{eq:dagger}, then using the definition of Bayesian inverse, we have
\[ \tikzfigscaled{dagger_lemma} \]
which establishes independence by Lemma~\ref{lem:indep} (III). The same argument shows that for an independent square, we have $gf^\dagger \ase{\stat X} v^\dagger u$ and hence equality in the quotient $\P(\C)$.  
\end{proof}

\begin{example}
Our synthetic definition of independence recovers the concrete ones defined in \cite{simpson:independence}. Spelling this out
\begin{enumerate}
\item in the probabilistic examples $\catname{FinProb}$, $\S(\gauss)$ and $\S(\borelstoch)$, independence this is the usual notion of conditional independence of random variables \cite[Examples~5.1,5.2]{simpson:independence}
\item for $\surj$, \eqref{eq:square} is independent if for all $x \in X, y \in Y$ with $u(x) = v(y)$ there exists $\omega \in \Omega$ with $f(\omega) = x$ and $g(\omega) = y$. This is, the square is a weak pullback in $\set$. This notion is related to \emph{variation independence} in database theory \cite{dawid2001separoids,simpson:independence}. 
\item following Proposition~\ref{prop:dagger_characterization}, a commuting square of matrices in $\coiso$ is independent if it satisfies $GF^T = V^TU$.
\end{enumerate}
\end{example}

\noindent At last, we observe that independent squares are pushouts, which we will return to in Section~\ref{sec:sheaves}.
\begin{proposition}\label{prop:pushout}
Independent squares are pushouts in $\S(\C)$.
\end{proposition}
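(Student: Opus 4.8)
The plan is to verify the universal property of the pushout directly for an independent square \eqref{eq:square}, read as a cocone under the span $\sp X \xleftarrow{f} \sp\Omega \xrightarrow{g} \sp Y$ with legs $u,v$. I would work entirely in $\P(\C)$ and lean on the dagger characterization of independence (Proposition~\ref{prop:dagger_characterization}): the square is independent iff $gf^\dagger = v^\dagger u$, and hence, applying the involutive contravariant dagger, also $u^\dagger v = fg^\dagger$. Uniqueness of the mediating map is the cheap part: since every morphism of $\S(\C)$ is epic (Proposition~\ref{prop:sc_monic}), $u$ is epic, so a mediating map $h$ is already pinned down by the single equation $hu=a$.

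For existence, given a competing cocone $a:\sp X\to\sp W$ and $b:\sp Y\to\sp W$ in $\S(\C)$ with $af=bg$, I would propose the candidate $h:= a u^\dagger : \sp Z \to \sp W$ in $\P(\C)$. One triangle falls out immediately from the daggered independence equation:
\[ hv = a u^\dagger v = a f g^\dagger = (af)g^\dagger = (bg)g^\dagger = b, \]
using $u^\dagger v = fg^\dagger$, the cocone equation, and $gg^\dagger=\id$ (as $g$ is a co-isometry by Proposition~\ref{prop:det_coiso}). Since $b$ is deterministic and $h\circ v = b$, the final clause of Proposition~\ref{prop:det_coiso} forces $h$ itself to be deterministic, so $h$ is a genuine morphism of $\S(\C)$ and no separate determinism check is needed.

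The main obstacle is the other triangle, $hu = a$, because $u^\dagger u$ is only a self-adjoint idempotent and not the identity, so the co-isometry identity for $u$ cannot be used directly. Here I would run the chain
\[ hu = au^\dagger u = a f f^\dagger u^\dagger u = a f (uf)^\dagger u = a f (vg)^\dagger u = a f g^\dagger v^\dagger u = b g g^\dagger v^\dagger u = b v^\dagger u = b g f^\dagger = a f f^\dagger = a, \]
which threads together, in order, $ff^\dagger=\id$, contravariance of the dagger, commutativity $uf=vg$ of the square, contravariance again, the cocone equation $af=bg$, $gg^\dagger=\id$, the independence equation $v^\dagger u = gf^\dagger$, the cocone equation once more, and finally $ff^\dagger=\id$. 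This single line is the crux, since it is exactly where commutativity of the square, the independence equation, and the co-isometry identities for the span legs $f,g$ all come into play at once.

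I would close by noting that the symmetric candidate $b v^\dagger$ would satisfy the two triangles with the easy and hard roles swapped, so the computation above also shows $au^\dagger = bv^\dagger$; but since the single candidate $h=au^\dagger$ already verifies both triangles and lies in $\S(\C)$, together with the epi-based uniqueness this establishes that \eqref{eq:square} is a pushout.
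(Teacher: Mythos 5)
Your proof is correct, and its skeleton coincides with the paper's: the mediating map is the same candidate $h = a u^\dagger$ (the paper's $k = i u^\dagger = j v^\dagger$), uniqueness comes from epicness of $u$ (Proposition~\ref{prop:sc_monic}), and determinism of $h$ is extracted from the composite $hv = b$ via the last clause of Proposition~\ref{prop:det_coiso}, exactly as in the paper. Where you genuinely diverge is in verifying the two triangles. The paper works at the level of joint states: it invokes Lemma~\ref{lem:indep} to establish $i u^\dagger = j v^\dagger$ and then runs a string-diagram computation (using $k = j v^\dagger$, Bayesian inversion, Lemma~\ref{lem:indep}, and determinism of $f$) to obtain $k u = i$. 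You instead stay entirely inside the dagger category $\P(\C)$ and take Proposition~\ref{prop:dagger_characterization} as the key lemma: from $g f^\dagger = v^\dagger u$ and its daggered form $u^\dagger v = f g^\dagger$, both triangles become point-free equational chains using only contravariance and involutivity of the dagger, commutativity $uf = vg$, the cocone equation $af = bg$, and the co-isometry identities $f f^\dagger = \id$, $g g^\dagger = \id$; I checked every step of your long chain for $hu=a$ and the types and justifications are all valid in $\P(\C)$, where the dagger is strictly functorial. What your route buys is a fully algebraic, diagram-free proof that exhibits the pushout property as a piece of pure dagger-category algebra, arguably more portable to abstract dagger-categorical settings; what it costs is the logical dependence on Proposition~\ref{prop:dagger_characterization}, which the paper itself proves via Lemma~\ref{lem:indep} --- so the two arguments ultimately route through the same lemma, just packaged differently. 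One cosmetic remark: your closing claim $a u^\dagger = b v^\dagger$ follows most directly from the already-established $hv = b$ by post-composing with $v^\dagger$ and using $v v^\dagger = \id$, rather than by appeal to rerunning the symmetric computation.
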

\begin{proof}
Let the top square be independent; we need to show that a unique map $k$ exists.
\[\begin{tikzcd}
	{\sp \Omega} && {\sp X} \\
	\\
	{\sp Y} && {\sp Z} \\
	&&& {\sp{W}}
	\arrow["f", from=1-1, to=1-3]
	\arrow["g"', from=1-1, to=3-1]
	\arrow["u", from=1-3, to=3-3]
	\arrow["i", curve={height=-12pt}, from=1-3, to=4-4]
	\arrow["v"', from=3-1, to=3-3]
	\arrow["j"', curve={height=12pt}, from=3-1, to=4-4]
	\arrow["{\exists! k}"{description}, dotted, from=3-3, to=4-4]
\end{tikzcd}\]
It suffices to show that $k=i \circ u^\dagger = j \circ v^\dagger$ holds and makes the diagram commute. This completes the proof, because any such $k$ is automatically unique (because $u$ is epic) and deterministic (by Proposition~\ref{prop:det_coiso}). Using Lemma~\ref{lem:indep} we have
\[ \tikzfigscaled{pushout_1} \]
We now show that $k \circ u = i$, namely
\[ \tikzfigscaled{pushout_2} \]
where we use $k=j \circ v^\dagger$, Bayesian inversion, Lemma~\ref{lem:indep} and determinism of $f$, respectively. The equation $k \circ v = j$ is shown analogously by instantiating $k=i \circ u^\dagger$. 
\end{proof}

\subsection{Independent Pullbacks}

\noindent We will now show that the independence structure thus defined has independent pullbacks, which are given by the relative product construction.

\begin{definition}
Given a cospan $\sp{X_1} \xrightarrow{u_1} \sp Y \xleftarrow{u_2} \sp{X_2}$ of sample spaces, its \emph{relative product} 
\begin{equation}
\adjustbox{scale=\tikzfigscaling}{\begin{tikzcd}
		{\sp{X_1} \odot_{\sp Y} \sp{X_2}} && {\sp{X_1}} \\
		\\
		{\sp{X_2}} && {\sp{Y}}
		\arrow["{\pi_1}", dashed, from=1-1, to=1-3]
		\arrow["{\pi_2}"', dashed, from=1-1, to=3-1]
		\arrow["{u_1}", from=1-3, to=3-3]
		\arrow["{u_2}"', from=3-1, to=3-3]
	\end{tikzcd}} \label{eq:rel_prod} \end{equation} is defined as the sample space $(X_1 \otimes X_2, \rho)$ where 
\[ \tikzfigscaled{rel_prod} \]
\end{definition}

\noindent The relative product is closely related to the notion of \emph{conditional product} as defined by Dawid and Studen\'y \cite{dawid1999conditional}, and studied in Markov categories by Fritz \cite[Definition~12.8]{fritz2019synthetic}. The conditional product is a state of type $X_1 \otimes Y \otimes X_2$, from which the relative product is obtained by marginalization over $Y$. Our use of the relative product generalizes its use in \cite{simpson:independence}.

\begin{proposition}
The relative product square \eqref{eq:rel_prod} commutes and is independent.
\end{proposition}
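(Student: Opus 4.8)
The plan is to unfold the definition of the relative product. Write $q : I \to Y$ for the common pushforward state $u_1 \circ p_1 = u_2 \circ p_2$ (the two agree because $u_1,u_2$ preserve the state), so that $\rho = (u_1^\dagger \otimes u_2^\dagger) \circ \cpy_Y \circ q$: one copies $q$ and feeds the two copies into Bayesian inverses of the legs of the cospan. Before touching independence I would record two facts. First, marginalizing $\rho$ onto $X_1$ (delete the $X_2$-wire, using naturality of $\del$ so that $\del_{X_2} \circ u_2^\dagger = \del_Y$) yields $u_1^\dagger \circ q$, which equals $p_1$ by marginalizing the defining equation \eqref{eq:def_dagger} of the Bayesian inverse; symmetrically for $X_2$. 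Hence the projections $\pi_1,\pi_2$ are state-preserving, and since projections are always deterministic, they are genuine maps in $\S(\C)$, so \eqref{eq:rel_prod} at least lives in $\S(\C)$. Second, for either leg one has the almost-sure recovery identity $u_i \circ u_i^\dagger \ase{q} \id_Y$: post-composing \eqref{eq:def_dagger} with $u_i$ on the sampled wire and using that $u_i$ is deterministic (so it commutes with copying) collapses the right-hand side to $\cpy_Y \circ u_i \circ p_i = \cpy_Y \circ q$, which is exactly this claim.

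Next I would verify that the square commutes, i.e. $u_1 \circ \pi_1 \ase{\rho} u_2 \circ \pi_2$. The cleanest route is through the conditional product $\Pi$ on $X_1 \otimes Y \otimes X_2$, of which $\rho$ is the marginal over the central $Y$: projecting $\Pi$ onto $(X_1,Y)$ gives $\langle u_1^\dagger,\id_Y\rangle \circ q$, which by \eqref{eq:def_dagger} is the graph $\langle \id_{X_1}, u_1\rangle \circ p_1$, so the central copy of $Y$ is $\Pi$-almost-surely equal to $u_1$ applied to the $X_1$-coordinate; symmetrically it equals $u_2$ applied to the $X_2$-coordinate. Thus $u_1 \circ \pi_1$ and $u_2 \circ \pi_2$ both recover the shared central value and agree $\Pi$-almost-surely, and since each depends only on the $(X_1,X_2)$-marginal the equality descends to $\rho$. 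This makes \eqref{eq:rel_prod} a commuting square in $\S(\C)$, so Lemma~\ref{lem:indep} now applies.

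Finally, for independence I would invoke Lemma~\ref{lem:indep}. Tupling the two projections is the identity on $X_1 \otimes X_2$, so the joint state $\langle \pi_1,\pi_2\rangle \circ \rho$ is simply $\rho$, and it suffices to check that $\rho$ equals composite (I) of the lemma. That composite is built from the state on $Y$ obtained by pushing $\rho$ forward along the diagonal $d = u_1 \circ \pi_1$; by commutativity and the recovery identity this state is $u_1 \circ u_1^\dagger \circ q = q$, and copying it back through $u_1^\dagger$ and $u_2^\dagger$ reproduces exactly $(u_1^\dagger \otimes u_2^\dagger) \circ \cpy_Y \circ q = \rho$. Hence composite (I) equals $\rho = \langle\pi_1,\pi_2\rangle\circ\rho$, and the square is independent.

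I expect the main obstacle to be the bookkeeping of almost-sure equivalence classes: Bayesian inverses are only determined up to $q$-almost-sure equality, so I must ensure that the recovery identity and the evaluation of composite (I) are insensitive to the chosen representatives and genuinely descend to the quotient $\P(\C)$. Determinism of $u_1,u_2$ (Proposition~\ref{prop:det_coiso}) is precisely what licenses the copy-and-recover manipulations and keeps these computations well-defined.
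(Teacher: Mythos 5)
Your proof is correct and takes essentially the same route as the paper's: commutativity is established from the defining equation \eqref{eq:def_dagger} of the Bayesian inverse together with determinism of $u_1$ (your detour through the conditional product $\Pi$ is a repackaging of the same diagram manipulation, with the middle wire of $\Pi$ playing the role of the copied $q$-wire), and independence is verified via criterion (I) of Lemma~\ref{lem:indep}, using $\langle \pi_1,\pi_2\rangle = \id$ and that the pushforward of $\rho$ along the diagonal is $q$. The two steps you assert rather than derive --- that a graph-shaped marginal forces $\pi_Y \ase{\Pi} u_1\circ\pi_{X_1}$, and that this almost-sure equality descends along the marginalization $\Pi \mapsto \rho$ --- are standard and are discharged by relative positivity and by split-epicness of deterministic maps (Proposition~\ref{prop:det_coiso}), so they do not constitute a gap.
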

\begin{proof}
We need to show $u_1\pi_1 \ase{\rho} u_2\pi_2$, which upon some simplification and rearrangement requires us to show that
\[ \tikzfigscaled{rel_prod_commutes_2}\]
Using the definition of the Bayesian inverse and determinism of $u_1$, we transform the left hand side as follows
\[ \tikzfigscaled{rel_prod_commutes_3}\]
The analogous transformation is possible for the right-hand side, proving the desired equality. To show independence of the square, we establish form (I) of Lemma~\ref{lem:indep}.
\[ \tikzfigscaled{rel_prod_indep} \]
\end{proof}

\begin{proposition}
The relative product square is an independent pullback.
\end{proposition}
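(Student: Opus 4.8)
The plan is to verify the universal property of a pullback with respect to independent squares directly, taking the mediating map to be a tupling. Suppose we are given an independent outer kite over the same cospan $\sp{X_1} \xrightarrow{u_1} \sp Y \xleftarrow{u_2} \sp{X_2}$: a sample space $\sp{X'} = (X',p')$ together with maps $f' : \sp{X'} \to \sp{X_1}$ and $g' : \sp{X'} \to \sp{X_2}$ satisfying $u_1 f' = u_2 g' =: d'$ and $f' \bot g' \s d'$. I take as mediating map the tupling $m = \langle f', g' \rangle : X' \to X_1 \otimes X_2$ formed in $\C$. Since $f'$ and $g'$ are $p'$-almost surely deterministic and $\cpy$ is deterministic, $m$ is again $p'$-almost surely deterministic (a routine check), so it will be a map of sample spaces once state preservation is verified; the two triangle identities $\pi_1 m = f'$ and $\pi_2 m = g'$ are immediate from the comonoid counit laws.

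The crux is state preservation, $m \circ p' = \rho$, where $\rho$ denotes the relative-product state. I would deduce this by recognizing both sides as the same canonical composite. Writing $q$ for the state of $\sp Y$ and $p_1$ for that of $\sp{X_1}$, state preservation along the legs forces the corner state of the outer kite to coincide with that of the relative product:
\[ d' \circ p' = u_1 \circ f' \circ p' = u_1 \circ p_1 = q. \]
Hence the composite (I) of Lemma~\ref{lem:indep} — which is assembled purely from the cospan $u_1, u_2$, Bayesian inverses $u_1^\dagger, u_2^\dagger$, and the shared corner state $q$ — is literally the same state on $X_1 \otimes X_2$ whether computed for the outer kite or for the relative-product square (Lemma~\ref{lem:indep} guarantees this does not depend on the chosen inverses). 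Independence of the outer kite then gives $m \circ p' = \langle f', g' \rangle \circ p' = (\mathrm{I})$ by Lemma~\ref{lem:indep}, whereas the relative-product square was already shown to satisfy $\rho = (\mathrm{I})$. Therefore $m \circ p' = \rho$, and $m : \sp{X'} \to \sp{X_1} \odot_{\sp Y} \sp{X_2}$ is a genuine morphism of $\S(\C)$.

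For uniqueness I would argue as in Proposition~\ref{prop:sc_monic}: any competing mediating map $m'$ is almost surely deterministic and satisfies $\pi_1 m' = f'$, $\pi_2 m' = g'$, so relative positivity yields $m' \ase{p'} \langle \pi_1 m', \pi_2 m' \rangle = \langle f', g' \rangle = m$, with all equalities taken in $\S(\C)$, whence $m' = m$. The only substantive step is state preservation, and I expect the main obstacle to be a matter of bookkeeping rather than genuine difficulty — namely seeing that composite (I) depends on a square only through its cospan and corner state, both of which the leg-preservation equations force to agree. Once that is recognized, the identification $m \circ p' = \rho$, and hence the entire universal property, follows formally.
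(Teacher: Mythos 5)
Your proof is correct and takes essentially the same route as the paper's: the mediating map is forced to be the tupling $\langle f', g' \rangle$ by joint monicity of the projections on deterministic maps (Lemma~\ref{lem:relpos}), and state preservation $\langle f', g' \rangle \circ p' = \rho$ is exactly criterion (I) of Lemma~\ref{lem:indep} applied to the independent outer kite. Your explicit bookkeeping step --- that state preservation along the legs forces the kite's corner state to equal $q$, so composite (I) agrees for the kite and for the relative-product square --- is precisely the detail the paper leaves implicit in the phrase ``this is precisely criterion (I)''.
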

\begin{proof}
Assume that  in the following diagram the outer kite is independent. We claim there exists a unique mediating map $h$ as shown. 
\[\begin{tikzcd}
	{\sp{\Omega}} \\
	& {\sp{X_1} \reldot_{\sp Y} \sp{X_2}} && {\sp{X_1}} \\
	\\
	& {\sp{X_2}} && {\sp Y}
	\arrow["{\exists!h}"{description}, dashed, from=1-1, to=2-2]
	\arrow["{f_1}", curve={height=-12pt}, from=1-1, to=2-4]
	\arrow["{f_2}"', curve={height=12pt}, from=1-1, to=4-2]
	\arrow["{\pi_1}", from=2-2, to=2-4]
	\arrow["{\pi_2}"', from=2-2, to=4-2]
	\arrow["{u_1}", from=2-4, to=4-4]
	\arrow["{u_2}"', from=4-2, to=4-4]
\end{tikzcd}\]
Uniqueness is clear because the projections are jointly monic on deterministic maps (Lemma~\ref{lem:relpos}). In fact, we are forced to choose $h = \langle f_1, f_2 \rangle$ to make the diagram commute. It remains to show that this choice of $h$ is a valid morphism in $\S$ (i.e. measure-preserving): $\langle f_1, f_2 \rangle \circ p_\Omega = \rho$. But this is precisely criterion (I) for the independence in Lemma~\ref{lem:indep}. 
\end{proof}

\subsection{Weakness and Descent}

\noindent In Simpson's work, the categories $\S$ and their independence structures were studied in isolation. In our setting, because $\S(\C)$ is derived from Markov category, we can study the interplay of independent pullbacks not just with deterministic maps but also general (nondeterministic) channels from the supercategory $\P(\C)$. This reveals a certain analogy between arbitrary independent squares and \emph{weak pullbacks}. These results are essential stepping stones in the proof of the independence axiom (IP4), as well as of independent interest as they generalize Simpson's notion of \emph{descent} for an independence structure \cite{simpson:atomic}.

\begin{proposition}[Weak independent pullbacks]
Consider two independent squares in $\S(\C)$ over the same cospan 
\begin{equation}\begin{tikzcd}
	{\sp{\Omega'}} \\
	& {\sp \Omega} && {\sp X} \\
	\\
	& {\sp Y} && {\sp Z}
	\arrow["\phi"{description}, dashed, from=1-1, to=2-2]
	\arrow["{f'}", curve={height=-12pt}, from=1-1, to=2-4]
	\arrow["{g'}"', curve={height=12pt}, from=1-1, to=4-2]
	\arrow["f", from=2-2, to=2-4]
	\arrow["g"', from=2-2, to=4-2]
	\arrow["u"', from=2-4, to=4-4]
	\arrow["v", from=4-2, to=4-4]
\end{tikzcd}\label{eq:two_squares} \end{equation}
Then there exists a measure-preserving \emph{channel} $\phi : \Omega' \to \Omega$ (not necessarily deterministic, nor unique) making the diagram commute in $\P(\C)$. 
\end{proposition}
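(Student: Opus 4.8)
The plan is to reduce the statement to the observation that independence pins down the joint state of the two legs entirely in terms of the shared cospan data, and then to build $\phi$ by Bayesian inversion against that common joint. Write $p$ and $p'$ for the states of $\sp\Omega$ and $\sp{\Omega'}$, and let $p_X,p_Y,p_Z$ be the states of the shared sample spaces $\sp X,\sp Y,\sp Z$. Because both squares sit over the \emph{same} cospan $\sp X \xrightarrow{u} \sp Z \xleftarrow{v} \sp Y$, they induce the same marginals $f\circ p = f'\circ p' = p_X$ and $g\circ p = g'\circ p' = p_Y$, the same state $p_Z$, and hence the same Bayesian inverses $u^\dagger,v^\dagger$ in $\P(\C)$ (the dagger is a functor, so there is no residual choice). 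Applying Lemma~\ref{lem:indep} form (I) to each independent square expresses its joint as composite (I), which is assembled solely from $u^\dagger$, $v^\dagger$ and $p_Z$; since these data agree, the two joints coincide. I would record this as
\[ \langle f,g\rangle \circ p \;=\; \gamma \;=\; \langle f',g'\rangle \circ p', \]
where $\gamma$ denotes the common state on $X\otimes Y$.

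Next I would exhibit $\phi$ as a composite in $\P(\C)$. The tupling $h := \langle f,g\rangle$ is a composite of deterministic maps, hence $p$-almost surely deterministic, and it is measure-preserving as a morphism $h : \sp\Omega \to (X\otimes Y,\gamma)$; thus $h$ lies in $\S(\C)$ and, by Proposition~\ref{prop:det_coiso}, is a co-isometry, so that $h\circ h^\dagger = \id$ in $\P(\C)$. By the displayed equality, $\langle f',g'\rangle : \sp{\Omega'} \to (X\otimes Y,\gamma)$ is likewise a morphism of probability spaces. I then set
\[ \phi \;:=\; h^\dagger \circ \langle f',g'\rangle \;:\; \sp{\Omega'} \to \sp\Omega, \]
a channel in $\P(\C)$ which is automatically measure-preserving as a composite of measure-preserving maps; concretely $\phi\circ p' = h^\dagger\circ\gamma = p$, since the Bayesian inverse returns $\gamma = h\circ p$ back to $p$.

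Commutation then follows formally. Using the co-isometry identity,
\[ h\circ\phi \;=\; h\circ h^\dagger\circ\langle f',g'\rangle \;=\; \langle f',g'\rangle, \]
and postcomposing with the projections, which satisfy $\pi_X\circ h = f$ and $\pi_Y\circ h = g$, yields $f\circ\phi = f'$ and $g\circ\phi = g'$ in $\P(\C)$, as required. Since $h^\dagger$ is in general a genuine channel, $\phi$ need be neither deterministic nor unique, matching the statement. The only real content lies in the first paragraph: once one sees that independence forces the two joint states to agree, the mediating channel is just inversion against the common joint. I expect the point needing care is precisely the bookkeeping there, namely verifying that the two squares share all the ingredients entering composite (I) — the same $p_Z$ and the same $u^\dagger,v^\dagger$ — which is what makes $\gamma$, and hence $\phi$, well defined in $\P(\C)$.
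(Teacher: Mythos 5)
Your proof is correct and is essentially the paper's own argument with the relative product unfolded: your $(X\otimes Y,\gamma)$ \emph{is} the relative product $\sp X \reldot_{\sp Z} \sp Y$ (Lemma~\ref{lem:indep} form (I) is precisely the definition of its state $\rho$), your $h=\langle f,g\rangle$ is the paper's mediating map into the independent pullback, and your $\phi = h^\dagger\circ\langle f',g'\rangle$ is exactly the paper's composite $\sp{\Omega'} \to \sp X \reldot_{\sp Z} \sp Y \to \sp{\Omega}$ with the first leg the mediating map and the second the Bayesian inverse, verified via the same co-isometry identity $h\circ h^\dagger=\id$.
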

\begin{proof}
It suffices to construct a map $\sp X \reldot_{\sp Z} \sp Y \to \sp{\Omega}$. The general case can then be solved using the composite $\sp{\Omega'} \to \sp X \reldot_{\sp Z} \sp Y \to \sp{\Omega}$. Consider the diagram
\[\begin{tikzcd}
	{\sp \Omega} \\
	& {\sp X \reldot_{\sp Z} \sp Y} && {\sp X} \\
	\\
	& {\sp Y} && {\sp Z}
	\arrow["h"{description}, curve={height=-6pt}, dashed, from=1-1, to=2-2]
	\arrow["f", curve={height=-12pt}, from=1-1, to=2-4]
	\arrow["g"', curve={height=12pt}, from=1-1, to=4-2]
	\arrow["\phi"{description}, curve={height=-6pt}, dashed, from=2-2, to=1-1]
	\arrow["{\pi_1}", from=2-2, to=2-4]
	\arrow["{\pi_2}"', from=2-2, to=4-2]
	\arrow["u"', from=2-4, to=4-4]
	\arrow["v", from=4-2, to=4-4]
\end{tikzcd}\]
where $h$ be the mediating unique map into the independent pullback, and define the channel $\phi = h^\dagger$ as its Bayesian inverse. This is measure-preserving and satisfies $h\phi=\id$ by determinism. Therefore, $\phi$ makes the diagram commute as $f\phi = \pi_1 h \phi = \pi_1$ and $g\phi = \pi_2 h \phi = \pi_2$.
\end{proof}

\begin{proposition}[Nondeterministic Descent]\label{prop:nondet_descent}
Consider a diagram of commutative squares in $\S(\C)$ as follows,
\[\begin{tikzcd}
	{\sp{\Omega'}} \\
	& {\sp \Omega} && {\sp X} \\
	\\
	& {\sp Y} && {\sp Z}
	\arrow["\phi"{description}, from=1-1, to=2-2]
	\arrow["{f'}", curve={height=-12pt}, from=1-1, to=2-4]
	\arrow["{g'}"', curve={height=12pt}, from=1-1, to=4-2]
	\arrow["f", from=2-2, to=2-4]
	\arrow["g"', from=2-2, to=4-2]
	\arrow["u", from=2-4, to=4-4]
	\arrow["v", from=4-2, to=4-4]
\end{tikzcd}\]
Then the outer kite is independent if and only if the inner square is. The statement remains true if the mediating map $\phi$ is allowed to be a channel, i.e. lie in $\P(\C)$. 
\end{proposition}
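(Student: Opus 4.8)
The plan is to reduce both independence statements to a single equation and then observe that this equation is literally the same for the inner square and the outer kite. Write $p,p'$ for the states of $\sp\Omega,\sp{\Omega'}$ and recall that commutativity forces $f' = f\circ\phi$, $g' = g\circ\phi$. By Lemma~\ref{lem:indep}, a commuting square is independent precisely when its joint state $\langle f,g\rangle\circ p$ agrees with the composite (I), which is assembled from the Bayesian inverses $u^\dagger,v^\dagger$ and the state $d\circ p$ obtained by pushing $p$ forward along the diagonal $d = uf = vg$. First I would record that the two configurations share their cospan $u,v$ and their diagonal state: since $\phi$ is measure-preserving ($\phi\circ p' = p$), the diagonal of the kite satisfies $uf'\circ p' = uf\circ\phi\circ p' = uf\circ p = d\circ p$, and likewise the marginals agree, $f'\circ p' = f\circ p$ and $g'\circ p' = g\circ p$, so the Bayesian inverses entering (I) are unchanged. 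Hence the composite (I) is identical for both squares, and the proposition collapses to the single claim
\[ \langle f', g'\rangle\circ p' = \langle f, g\rangle\circ p, \]
after which inner and outer independence become the same equation and both directions follow at once.

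When $\phi$ lies in $\S(\C)$ this claim is immediate, because $\phi$ is then deterministic: $(\phi\otimes\phi)\circ\cpy_{\Omega'} = \cpy_\Omega\circ\phi$, so substituting $f'=f\phi$, $g'=g\phi$ gives $\langle f',g'\rangle\circ p' = (f\otimes g)\circ\cpy_\Omega\circ\phi\circ p' = \langle f,g\rangle\circ p$. (Equivalently, via Proposition~\ref{prop:dagger_characterization}, $\phi$ is a co-isometry, so $g'f'^\dagger = g\,\phi\phi^\dagger f^\dagger = gf^\dagger$, matching the dagger criterion for the inner square.)

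The main obstacle is the channel case, where $\phi\in\P(\C)$ need not be deterministic, so $(\phi\otimes\phi)\circ\cpy_{\Omega'}$ (two independent re-samplings given the input) genuinely differs from $\cpy_\Omega\circ\phi$ (one sample, copied); here neither the naive substitution nor the $\phi\phi^\dagger=\id$ step is available. To overcome this I would exploit that the marginal $f' = f\phi$ is still deterministic and invoke relative positivity. Applying it to the channel $\langle f,\id_\Omega\rangle\circ\phi\colon\Omega'\to X\otimes\Omega$, whose $X$-marginal is the $p'$-almost surely deterministic map $f' = f\phi$ and whose $\Omega$-marginal is $\phi$, yields $\langle f,\id_\Omega\rangle\circ\phi \ase{p'} \langle f\phi,\phi\rangle$. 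Post-composing this almost-sure equality first with $p'$ and then with $\id_X\otimes g$ turns the left side into $(f\otimes g)\circ\cpy_\Omega\circ p = \langle f,g\rangle\circ p$ and the right side into $(f\phi\otimes g\phi)\circ\cpy_{\Omega'}\circ p' = \langle f',g'\rangle\circ p'$, establishing the claim. Notably this argument needs only one of $f',g'$ to be deterministic, and the final sentence about the mediating map being allowed to be a channel is exactly this case, so the two assertions of the proposition are handled uniformly.
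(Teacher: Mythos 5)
Your proof is correct and takes essentially the same route as the paper: both reduce the statement, via criterion (I) of Lemma~\ref{lem:indep} and the observation that the cospan, diagonal state and Bayesian inverses are shared, to the single equality of joint states $\langle f',g'\rangle\circ p' = \langle f,g\rangle\circ p$, which is then derived from relative positivity. The paper applies Lemma~\ref{lem:relpos} directly to $\langle f,g\rangle\circ\phi$ (using that its marginals $f',g'$ are deterministic) rather than to $\langle f,\id_\Omega\rangle\circ\phi$ followed by postcomposition with $\id_X\otimes g$, but this is only a cosmetic difference, and your closing remark that one deterministic marginal suffices and handles the channel case uniformly matches the paper's argument.
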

\begin{proof}
The map $\langle f, g \rangle \circ \phi$ has the deterministic marginals $f',g'$, so by Lemma~\ref{lem:relpos} it is the product of its marginals, $\langle f, g \rangle \circ \phi \ase{\stat{\Omega'}} \langle f',g'\rangle$. Hence we have an equality
\[ \tikzfigscaled{indep_kite} \]
This concludes the proof by \ref{lem:indep} Criterion (I).
\end{proof}

\noindent This proposition strengthens the descent property of \cite{simpson:atomic} which restricts $\phi$ to maps instead of arbitrary channels. Similarly, we can also strengthen the universal property of the independent pullback to quantify over mediating channels:
\begin{proposition}\label{prop:strong_uniqueness}
In the situation \eqref{eq:two_squares}, the inner square is an independent pullback if and only if whenever the outer kite is independent, then there exists a unique mediating \emph{channel} $\phi$.
\end{proposition}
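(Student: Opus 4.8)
The plan is to prove the two implications separately, in both cases comparing the inner square to the relative product $\sp P = \sp X \reldot_{\sp Z} \sp Y$ of the cospan $\sp X \xrightarrow{u} \sp Z \xleftarrow{v} \sp Y$, which we have already shown to be an independent pullback with projections $\pi_1, \pi_2$. The statement merely sharpens the defining universal property of an independent pullback — which quantifies over mediating \emph{maps} in $\S(\C)$ — to one quantifying over arbitrary mediating \emph{channels} in $\P(\C)$, so the real content is that any such channel is forced to coincide with the (deterministic) mediating map.

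For the forward direction I would assume the inner square is an independent pullback. Since independent pullbacks over a fixed cospan are unique up to a (necessarily unitary) isomorphism commuting with the legs, I can transport the problem along an iso $w : \sp\Omega \to \sp P$ and reduce to proving channel-uniqueness for $\sp P$ itself. There, given an independent kite $(f',g')$ out of $\sp{\Omega'}$, the ordinary universal property already supplies the mediating map $h = \langle f', g'\rangle$, giving existence; for uniqueness I would take any channel $\phi : \sp{\Omega'} \to \sp P$ with $\pi_1\phi = f'$ and $\pi_2\phi = g'$ and invoke Lemma~\ref{lem:relpos} for $\phi$ and $\langle f',g'\rangle$, which applies because the marginal $f'$ is deterministic, forcing $\phi = \langle f',g'\rangle = h$ in $\P(\C)$.

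For the reverse direction I would assume the channel-uniqueness property and show the inner square is isomorphic to $\sp P$, so that it inherits the independent-pullback property. First, since the inner square is independent, the universal property of $\sp P$ gives a unique map $h_0 : \sp\Omega \to \sp P$ with $\pi_1 h_0 = f$ and $\pi_2 h_0 = g$. Second, since the relative-product square is independent, the hypothesis gives a unique channel $\psi : \sp P \to \sp\Omega$ with $f\psi = \pi_1$ and $g\psi = \pi_2$. Then $\psi h_0$ and $\id_{\sp\Omega}$ both mediate the inner square over itself, so channel-uniqueness forces $\psi h_0 = \id_{\sp\Omega}$; this makes $\psi$ split epic, hence deterministic by Proposition~\ref{prop:det_coiso}, so $h_0$ and $\psi$ both lie in $\S(\C)$. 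Cancelling the epic $h_0$ (Proposition~\ref{prop:sc_monic}) in $(h_0\psi)h_0 = h_0 = \id_{\sp P}h_0$ then yields $h_0\psi = \id_{\sp P}$, so $\psi$ is an isomorphism commuting with the projections and the inner square is an independent pullback.

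The main obstacle is the failure of joint monicity of $(f,g)$ in $\P(\C)$: the projections are jointly monic only in $\S(\C)$ (Proposition~\ref{prop:sc_monic}), so I cannot directly cancel $\langle f,g\rangle$ to pin down a candidate channel. This is precisely why both directions route through the relative product, whose underlying pairing is the identity on $\sp X \reldot_{\sp Z} \sp Y$ and where Lemma~\ref{lem:relpos} becomes directly applicable; everything else is formal bookkeeping with the universal property together with the facts that split epics are deterministic and that $\S(\C)$-morphisms are epic.
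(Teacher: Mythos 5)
Your proof is correct and essentially matches the paper's: both reduce to the relative product $\sp X \reldot_{\sp Z} \sp Y$, settle the forward direction by pinning down any mediating channel via Lemma~\ref{lem:relpos} (applied exactly as the paper does, with the relative product as codomain), and settle the converse by showing the canonical comparison map into the relative product is an isomorphism. The one small divergence is in the converse: the paper takes the Bayesian inverse $h^\dagger$ as the candidate inverse (using $hh^\dagger = \id$ from determinism, so it only invokes the \emph{uniqueness} half of the channel hypothesis), whereas you extract your inverse $\psi$ from the \emph{existence} half applied to the relative-product kite and then recover determinism of $\psi$ via split-epi-ness (Proposition~\ref{prop:det_coiso}) --- both routes are sound.
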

\begin{proof}
If the inner square is an independent pullback, there exists a mediating map $h$. To show uniqueness among \emph{channels}, replace without loss of generality $\sp{\Omega}$ with $\sp{X} \reldot_{\sp Z} \sp{Y}$. By \ref{lem:relpos}, any other mediating channel $\phi$ must be deterministic, hence equal to $h$ by ordinary uniqueness for independent pullbacks. 

Conversely, let $h : \sp \Omega \to \sp{X} \reldot_{\sp Z} \sp Y$ be the mediating map into the relative product. Then $hh^\dagger = \id$ by determinism, and the following diagram commutes
\[\begin{tikzcd}
	{\sp \Omega} \\
	& {\sp \Omega} && {\sp X} \\
	\\
	& {\sp Y} && {\sp Z}
	\arrow["{h^\dagger h}"{description}, curve={height=-6pt}, dashed, from=1-1, to=2-2]
	\arrow["\id"{description}, curve={height=6pt}, dashed, from=1-1, to=2-2]
	\arrow["f", curve={height=-12pt}, from=1-1, to=2-4]
	\arrow["g"', curve={height=12pt}, from=1-1, to=4-2]
	\arrow["f", from=2-2, to=2-4]
	\arrow["g"', from=2-2, to=4-2]
	\arrow["u"', from=2-4, to=4-4]
	\arrow["v", from=4-2, to=4-4]
\end{tikzcd}\]
By uniqueness of mediating channels, we have $h^\dagger h = \id_{\sp \Omega}$, i.e. $h$ is an isomorphism. 
\end{proof}

\subsection{Verification of the Independence Axioms}\label{subsec:ip}

\noindent We can now proceed to verify Simpson's axioms (IP1)-(IP5) for our notion of independence on the category $\S(\C)$. Most of these are straightforward, with (IP4) taking the most work.
 
\begin{proposition}[IP1]
Every square in $\S(\C)$ of the following form is independent
\[\begin{tikzcd}
	{\sp \Omega} && {\sp X} \\
	\\
	{\sp Y} && {\sp Y}
	\arrow["f", from=1-1, to=1-3]
	\arrow["g"', from=1-1, to=3-1]
	\arrow["u"', from=1-3, to=3-3]
	\arrow["{\id_Y}", from=3-1, to=3-3]
\end{tikzcd}\]
\end{proposition}
\begin{proof}
Using determinism of $f$ and criterion (III) of Lemma~\ref{lem:indep}:
\[ \tikzfigscaled{ip1} \]
\end{proof}

\begin{proposition}[IP2]
In $\S(\C)$, if the left square is independent, so is the right one.
\[\begin{tikzcd}
	{\sp \Omega} && {\sp X} && {\sp \Omega} && {\sp Y} \\
	\\
	{\sp Y} && {\sp Z} && {\sp X} && {\sp Z}
	\arrow["f", from=1-1, to=1-3]
	\arrow["g"', from=1-1, to=3-1]
	\arrow["u", from=1-3, to=3-3]
	\arrow["g", from=1-5, to=1-7]
	\arrow["f"', from=1-5, to=3-5]
	\arrow["v", from=1-7, to=3-7]
	\arrow["v"', from=3-1, to=3-3]
	\arrow["u"', from=3-5, to=3-7]
\end{tikzcd}\]
\end{proposition}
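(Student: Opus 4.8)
The plan is to prove (IP2) by reducing it to the symmetric characterization of independence. The key observation is that independence of a square, as defined, is a genuinely symmetric condition: the left square asserts $f \bot g \s d$ where $d = u \circ f = v \circ g$, and the right square is obtained precisely by swapping the roles of the legs $f$ and $g$ (and correspondingly $u$ and $v$). So the real content is that conditional independence $f \bot g \s d$ is symmetric in $f$ and $g$.

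First I would record that the diagonal $d$ is the same for both squares: in the left square $d = u \circ f = v \circ g$, and the right square has the same commuting diagonal since it is built from the same four maps, merely transposed. Then I would appeal to Lemma~\ref{lem:indep}, which characterizes independence via equality of the joint state $\langle f, g \rangle \circ p_{\sp\Omega}$ with any of the composites (I)--(VI). The cleanest route is to use the dagger characterization of Proposition~\ref{prop:dagger_characterization}: the left square is independent iff $g f^\dagger = v^\dagger u$ holds in $\P(\C)$. Applying the dagger functor (contravariant involution) to both sides gives $(g f^\dagger)^\dagger = (v^\dagger u)^\dagger$, i.e. $f g^\dagger = u^\dagger v$, which is exactly the dagger condition for the right (transposed) square.

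Concretely I would write: by Proposition~\ref{prop:dagger_characterization}, the left square is independent iff $g f^\dagger = v^\dagger u$. Taking daggers on both sides and using $\dagger\dagger = \id$ together with contravariance $(\beta\alpha)^\dagger = \alpha^\dagger\beta^\dagger$, we obtain
\[ f g^\dagger = (g f^\dagger)^\dagger = (v^\dagger u)^\dagger = u^\dagger v, \]
which is precisely the independence condition for the right square (whose legs are $g, f$ with cospan $v, u$). Hence the right square is independent.

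I do not expect any serious obstacle here, since the dagger characterization makes the symmetry manifest; the only thing to be careful about is bookkeeping, namely matching the maps of the transposed square to the correct instance of Proposition~\ref{prop:dagger_characterization}. If one prefers to avoid invoking the Di Meglio--Perrone characterization, an alternative is to unfold Lemma~\ref{lem:indep} directly: the witnessing channels $\phi = u^\dagger$ and $\psi = v^\dagger$ for the left square, after applying the swap $\sigma$, witness independence of the right square, since the defining equation~\eqref{eq:def_indep_square} is visibly symmetric under exchanging the two output wires. Either way, the step amounts to observing that the symmetry of the monoidal product (the swap $\sigma_{X,Y}$) and contravariance of $\dagger$ conspire to interchange the two squares.
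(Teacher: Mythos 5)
Your proposal is correct, and no circularity lurks in it: Proposition~\ref{prop:dagger_characterization} is proved from Lemma~\ref{lem:indep} without any appeal to (IP2), so you may freely use it here. Your primary route, however, is genuinely different from the paper's. The paper disposes of (IP2) in one line --- ``immediate from commutativity of the copy maps'' --- which is exactly your fallback argument: the defining equation~\eqref{eq:def_indep_square} (equivalently, criterion (I) of Lemma~\ref{lem:indep}) is manifestly symmetric under postcomposing both sides with the swap $\sigma_{X,Y}$, since $\sigma \circ \cpy = \cpy$ turns $\langle f,g\rangle$ into $\langle g,f\rangle$ and interchanges the witnessing channels $\phi,\psi$, while the diagonal $d = uf = vg$ is unchanged. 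Your main argument instead runs through the dagger characterization: from $gf^\dagger = v^\dagger u$ in $\P(\C)$, contravariance and involutivity of $\dagger$ give $fg^\dagger = u^\dagger v$, and your bookkeeping correctly identifies this as the instance of Proposition~\ref{prop:dagger_characterization} for the transposed square (top $g$, left $f$, right $v$, bottom $u$). What each buys: the paper's argument is more elementary, using only the cocommutativity axiom of Markov categories and nothing beyond the definition of independence; yours is a slicker algebraic one-liner that makes the $f \leftrightarrow g$ symmetry completely mechanical, at the cost of invoking a heavier result (the Di Meglio--Perrone characterization, itself proved via Lemma~\ref{lem:indep} and relative positivity). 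For a paper aiming at synthetic, axiom-level proofs the lighter dependency is preferable, but as mathematics both are sound, and your observation that the dagger makes the symmetry ``manifest'' is a fair conceptual point.
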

\begin{proof}
Immediate from commutativity of the copy maps. 
\end{proof}

\begin{proposition}[IP3]
If (A) and (B) are independent composable squares in $\S(\C)$, as in
\begin{equation}\adjustbox{scale=\tikzfigscaling}{\begin{tikzcd}
	\bullet && \bullet && \bullet \\
	& {(A)} && {(B)} \\
	\bullet && \bullet && \bullet
	\arrow[from=1-1, to=1-3]
	\arrow[from=1-3, to=1-5]
	\arrow[from=1-5, to=3-5]
	\arrow[from=3-1, to=3-3]
	\arrow[from=3-3, to=3-5]
	\arrow[from=1-3, to=3-3]
	\arrow[from=1-1, to=3-1] \label{eq:composite_squares}
\end{tikzcd}}\end{equation}
then the composite square (AB) is independent.
\end{proposition}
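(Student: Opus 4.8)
The plan is to reduce the statement to a one-line algebraic computation in $\P(\C)$ by invoking the dagger characterization of independence from Proposition~\ref{prop:dagger_characterization}. First I would fix notation for the diagram \eqref{eq:composite_squares}. Reading off the corners, square (A) has top edge $a$, left edge $c$, right edge $d$ (the shared middle vertical), and bottom edge $m$; square (B) has top edge $b$, left edge $d$, right edge $e$, and bottom edge $n$. The composite square (AB) then has top edge $b \circ a$, left edge $c$, right edge $e$, and bottom edge $n \circ m$. The crucial bookkeeping observation is that the right edge of (A) and the left edge of (B) are the \emph{same} map $d$, which is exactly what will let the two independence hypotheses paste together.

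By Proposition~\ref{prop:dagger_characterization}, independence of (A) and of (B) is equivalent to the two equations
\[ c \circ a^\dagger = m^\dagger \circ d, \qquad d \circ b^\dagger = n^\dagger \circ e \]
holding in $\P(\C)$, and what I must establish is the corresponding equation witnessing independence of (AB), namely $c \circ (b \circ a)^\dagger = (n \circ m)^\dagger \circ e$. Since $\P(\C)$ is a genuine dagger category, in which the dagger is a strict contravariant involutive functor, the Bayesian inverse of a composite is the composite of the inverses in reverse order, so $(b \circ a)^\dagger = a^\dagger \circ b^\dagger$ and $(n \circ m)^\dagger = m^\dagger \circ n^\dagger$. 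I would then simply chain the two hypotheses:
\[ c \circ (b \circ a)^\dagger = c \circ a^\dagger \circ b^\dagger = m^\dagger \circ d \circ b^\dagger = m^\dagger \circ n^\dagger \circ e = (n \circ m)^\dagger \circ e, \]
using independence of (A) for the second equality and independence of (B) for the third. Appealing once more to Proposition~\ref{prop:dagger_characterization}, this equation certifies that (AB) is independent.

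The point to emphasize is that there is essentially no obstacle once the dagger formulation is in hand: the entire content of (IP3) is just the contravariant functoriality of the dagger on $\P(\C)$ combined with transitive pasting of the two equations through their shared factor $d$. The only care required is matching the shared edge correctly and respecting the order of composition under the contravariant dagger, so that the intermediate term $m^\dagger \circ d \circ b^\dagger$ genuinely factors through both hypotheses. A direct route via criterion (I) of Lemma~\ref{lem:indep} and string diagrams is also available, but it would force one to manipulate several Bayesian inverses by hand; the dagger characterization collapses this to the short computation above, so I would present that as the proof.
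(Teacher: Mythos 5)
Your proof is correct, but it takes a genuinely different route from the paper. The paper proves (IP3) by verifying criterion (I) of Lemma~\ref{lem:indep} directly: it labels the two squares, applies Lemma~\ref{lem:indep} to (A) and then to (B), and pastes the resulting equalities of joint states in a string-diagram computation. You instead invoke Proposition~\ref{prop:dagger_characterization} (the Di Meglio--Perrone characterization, which the paper states and proves \emph{before} the axiom verifications, so there is no circularity --- it rests on Lemma~\ref{lem:indep}~(III), not on IP3), together with the fact that the dagger on $\P(\C)$ is a strict contravariant involution, so that $(b\circ a)^\dagger = a^\dagger \circ b^\dagger$ holds on the nose in the quotient category. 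Your bookkeeping is right: with (A) giving $c\,a^\dagger = m^\dagger d$ and (B) giving $d\,b^\dagger = n^\dagger e$, the chain through the shared edge $d$ yields $c\,(ba)^\dagger = (nm)^\dagger e$, and you correctly use both directions of the iff in Proposition~\ref{prop:dagger_characterization} (forward to extract the hypotheses, backward to conclude). What each approach buys: yours is shorter and more conceptual, exposing (IP3) as nothing but functoriality of $\dagger$ plus transitive pasting --- arguably the ``right'' proof once the dagger formulation is available; the paper's string-diagram proof keeps the verification of all five axioms uniform through its workhorse Lemma~\ref{lem:indep} and does not lean on the externally sourced dagger characterization. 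One small point worth making explicit in a final write-up: Proposition~\ref{prop:dagger_characterization} applies to \emph{commutative} squares in $\S(\C)$, so you should note (one line) that (AB) commutes and its four edges remain in $\S(\C)$, both immediate since $\S(\C)$ is a subcategory closed under composition.
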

\begin{proof}
We label the squares as follows
\[\begin{tikzcd}
	{\sp \Omega} && {\sp X_1} && {\sp Y_1} \\
	\\
	{\sp X_2} && {\sp Y_2} && {\sp Z}
	\arrow["{f_1}", from=1-1, to=1-3]
	\arrow["{f_2}"', from=1-1, to=3-1]
	\arrow["d"{description}, from=1-1, to=3-3]
	\arrow["{h_1}", from=1-3, to=1-5]
	\arrow["{g_1}"{description}, from=1-3, to=3-3]
	\arrow["e"{description}, from=1-3, to=3-5]
	\arrow["k", from=1-5, to=3-5]
	\arrow["{g_2}"', from=3-1, to=3-3]
	\arrow["{h_2}"', from=3-3, to=3-5]
\end{tikzcd}\]
We can now verify criterion (I) by applying Lemma~\ref{lem:indep} to the independent squares (A) and (B)
\[ \tikzfigscaled{ip3} \]
\end{proof}

The following property (IP4) requires the most machinery to prove. Proposition~\ref{prop:nondet_descent} enables us to adapt the usual proof strategy for the following variant of the pullback lemma: If $(AB)$ is a weak pullback and $(B)$ a pullback, then $(A)$ is a weak pullback. 

\begin{proposition}[IP4]
In the situation \eqref{eq:composite_squares}, if the composite rectangle (AB) is independent and (B) is an independent pullback, then (A) is independent. 
\end{proposition}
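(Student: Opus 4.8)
The plan is to mimic the classical pullback lemma in its weak form (``if $(AB)$ is a weak pullback and $(B)$ is a pullback, then $(A)$ is a weak pullback''), reading ``independent square'' as ``weak pullback'' and ``independent pullback'' as ``genuine pullback''. The dictionary is supplied by the Weak Independent Pullbacks proposition, by Nondeterministic Descent (Proposition~\ref{prop:nondet_descent}), and by the channel-level uniqueness of Proposition~\ref{prop:strong_uniqueness}. By Nondeterministic Descent it suffices to produce a single measure-preserving \emph{channel} $\phi : \sp{\Omega} \to \sp{X_1} \reldot_{\sp{Y_2}} \sp{X_2}$ into the relative product of the cospan $\sp{X_1}\xrightarrow{g_1}\sp{Y_2}\xleftarrow{g_2}\sp{X_2}$ of $(A)$, satisfying $\pi_1 \phi = f_1$ and $\pi_2 \phi = f_2$: since the relative product square is independent, descent along $\phi$ then forces the outer kite, which is exactly $(A)$, to be independent.

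To build $\phi$ I would first stack $(B)$ horizontally on top of the relative product square of $(A)$. The relative product square with apex $\sp{X_1}\reldot_{\sp{Y_2}}\sp{X_2}$ and legs $\pi_1,\pi_2$ is independent, and $(B)$ is independent, so by (IP3) their composite is an independent square with apex $\sp{X_1}\reldot_{\sp{Y_2}}\sp{X_2}$ over the cospan $\sp{Y_1}\xrightarrow{k}\sp Z\xleftarrow{h_2 g_2}\sp{X_2}$, with legs $h_1\pi_1$ and $\pi_2$. The hypothesis that $(AB)$ is independent makes $\sp{\Omega}$ a second independent square over this same cospan, with legs $h_1 f_1$ and $f_2$. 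The Weak Independent Pullbacks proposition then yields a channel $\phi:\sp{\Omega}\to \sp{X_1}\reldot_{\sp{Y_2}}\sp{X_2}$ commuting with both legs, i.e.\ $h_1\pi_1\phi = h_1 f_1$ and $\pi_2\phi = f_2$.

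This already gives the second required equation, and the real work is to upgrade $h_1\pi_1\phi = h_1 f_1$ to $\pi_1\phi = f_1$; this is precisely where being a \emph{pullback} rather than merely independent is used. I would argue that $\pi_1\phi$ and $f_1$ are two channels $\sp{\Omega}\to\sp{X_1}$ mediating the \emph{same} independent square over the cospan of $(B)$. Indeed, precomposing the independent pullback $(B)$ with the map $f_1$ and applying Nondeterministic Descent shows that the kite with legs $(h_1 f_1, g_1 f_1)$ is independent over $\sp{\Omega}$, so $f_1$ is one mediating channel; and $\pi_1\phi$ is another, since $h_1(\pi_1\phi)=h_1 f_1$ by construction, while $g_1(\pi_1\phi)=g_2\pi_2\phi = g_2 f_2 = g_1 f_1$ using commutativity of the relative product square and of $(A)$ together with $\pi_2\phi=f_2$. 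Because $(B)$ is an independent pullback, Proposition~\ref{prop:strong_uniqueness} gives uniqueness of the mediating channel, hence $\pi_1\phi = f_1$ in $\P(\C)$.

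I expect this uniqueness step to be the main obstacle: it is the only place where the pullback property of $(B)$ is genuinely needed, and it crucially requires the \emph{channel-level} uniqueness of Proposition~\ref{prop:strong_uniqueness} rather than uniqueness among deterministic maps, since $\phi$ (and therefore $\pi_1\phi$) is a priori only a channel. With $\pi_1\phi=f_1$ and $\pi_2\phi=f_2$ established, a final application of Nondeterministic Descent to $\phi$ and the independent relative product square concludes that $(A)$ is independent.
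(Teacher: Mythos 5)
Your proposal is correct and takes essentially the same route as the paper: both adapt the weak pullback lemma by stacking (B) via (IP3), invoking the Weak Independent Pullbacks proposition to obtain a mediating channel, using channel-level uniqueness at the independent pullback (B) (Proposition~\ref{prop:strong_uniqueness}) to upgrade $h_1\pi_1\phi = h_1 f_1$ to $\pi_1\phi = f_1$, and concluding by Nondeterministic Descent (Proposition~\ref{prop:nondet_descent}). The only difference is orientation---the paper constructs the channel from an arbitrary independent square over the cospan of (A) into $\sp{\Omega}$ and applies descent in the ``outer kite independent implies inner square independent'' direction, whereas you map $\sp{\Omega}$ into the relative product and use the converse direction---which is immaterial since descent is stated as an equivalence.
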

\begin{proof}
By Proposition~\ref{prop:nondet_descent}, it suffices to construct a mediating channel $\phi$ into (A) from an arbitrary independent square $\sp{\Omega'}$ .
\[\begin{tikzcd}
	{\sp {\Omega'}} \\
	& {\sp \Omega} && {\sp X_1} && {\sp Y_1} \\
	\\
	& {\sp X_2} && {\sp Y_2} && {\sp Z}
	\arrow["\phi"{description}, dotted, from=1-1, to=2-2]
	\arrow["{f_1'}", curve={height=-12pt}, from=1-1, to=2-4]
	\arrow["{f_2'}"', curve={height=12pt}, from=1-1, to=4-2]
	\arrow["{f_1}", from=2-2, to=2-4]
	\arrow["{f_2}"', from=2-2, to=4-2]
	\arrow["{h_1}", from=2-4, to=2-6]
	\arrow["{g_1}"{description}, from=2-4, to=4-4]
	\arrow["k", from=2-6, to=4-6]
	\arrow["{g_2}"', from=4-2, to=4-4]
	\arrow["{h_2}"', from=4-4, to=4-6]
\end{tikzcd}\]
\noindent Because the outer square $(AB)$ is independent, there exists a mediating channel $\phi$ which makes the following diagram commute
\[\begin{tikzcd}
	{\sp {\Omega'}} \\
	& {\sp \Omega} && {\sp X_1} && {\sp Y_1} \\
	\\
	& {\sp X_2} && {\sp Y_2} && {\sp Z}
	\arrow["\phi"{description}, from=1-1, to=2-2]
	\arrow["{h_1f_1'}", curve={height=-12pt}, from=1-1, to=2-6]
	\arrow["{f_2'}"', curve={height=12pt}, from=1-1, to=4-2]
	\arrow["{f_1}", from=2-2, to=2-4]
	\arrow["{f_2}"', from=2-2, to=4-2]
	\arrow["{h_1}", from=2-4, to=2-6]
	\arrow["k", from=2-6, to=4-6]
	\arrow["{g_2}"', from=4-2, to=4-4]
	\arrow["{h_2}"', from=4-4, to=4-6]
\end{tikzcd}\]
We claim that $\phi$ also mediates the smaller square (A), i.e. additionally satisfies $f_1\phi = f_1'$. For this, note that the following diagram commutes with two mediating maps
\[\begin{tikzcd}
	{\sp \Omega'} \\
	\\
	&& {\sp X_1} && {\sp Y_1} \\
	\\
	&& {\sp Y_2} && {\sp Z}
	\arrow["{f_1'}", shift left, from=1-1, to=3-3]
	\arrow["{f_1\phi}"', shift right, from=1-1, to=3-3]
	\arrow["{h_1f_1'}", curve={height=-12pt}, from=1-1, to=3-5]
	\arrow["{g_2f_2'}"', shift right, curve={height=12pt}, from=1-1, to=5-3]
	\arrow["{h_1}", from=3-3, to=3-5]
	\arrow["{g_1}"{description}, from=3-3, to=5-3]
	\arrow["k", from=3-5, to=5-5]
	\arrow["{h_2}"', from=5-3, to=5-5]
\end{tikzcd}\]
By Proposition~\ref{prop:strong_uniqueness}, we conclude that any two mediating channels must be equal, hence $f_1\phi = f_1'$ as desired. 
\end{proof}

\begin{proposition}[IP5]
Every cospan $\sp{X_1} \to \sp Y \leftarrow \sp{Y_2}$ in $\S(\C)$ admits a completion to an independent pullback.
\end{proposition}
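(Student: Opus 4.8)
The plan is to prove (IP5) by exhibiting an explicit completion, and the natural candidate is precisely the relative product construction introduced just above. Given a cospan $\sp{X_1} \xrightarrow{u_1} \sp Y \xleftarrow{u_2} \sp{X_2}$, I would form the relative product $\sp{X_1} \reldot_{\sp Y} \sp{X_2}$ together with its projections $\pi_1, \pi_2$. The bulk of the work needed for (IP5) has in fact already been discharged: the two propositions immediately preceding this statement establish that the relative product square \eqref{eq:rel_prod} commutes, is independent, and satisfies the universal property of an independent pullback. So the core content of (IP5) — namely that \emph{some} independent pullback completing the cospan exists — is immediate from those results applied to $u_1, u_2$.

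Concretely, the proof should read: by the Definition of the relative product, applied to the cospan $\sp{X_1} \xrightarrow{u_1} \sp Y \xleftarrow{u_2} \sp{X_2}$, we obtain the square \eqref{eq:rel_prod}. The preceding two Propositions show this square commutes, is independent, and is an independent pullback. Hence it is the required completion. This is essentially a one-line deduction that packages the relative-product results into the form demanded by Simpson's axiom, so I expect no genuine obstacle here.

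The one subtlety I would watch is a well-definedness point: the relative product is defined via $\rho$ using chosen Bayesian inverses of $u_1$ (or $u_2$), so strictly speaking the construction depends on representative choices. I would note that different choices yield $\ase{}$-equal states and hence the \emph{same} object of $\S(\C)$ (recall morphisms and states are taken up to almost-sure equality), so the completion is well-defined as a sample space. I would also confirm that $\pi_1, \pi_2$ are genuinely maps in $\S(\C)$, i.e. deterministic and measure-preserving with respect to $\rho$ — but this is exactly what the ``commutes and is independent'' proposition verifies, since criterion (I) of Lemma~\ref{lem:indep} gives $\langle \pi_1, \pi_2 \rangle$-preservation of the state by construction. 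Thus the entire statement follows by assembling results already proved, and the proof is appropriately short; the ``hard part'' was really the construction and verification of the relative product carried out in the two preceding propositions.
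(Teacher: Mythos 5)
Your proposal is correct and is exactly the paper's argument: the paper's proof of (IP5) is the one-liner ``Given by the relative product construction,'' relying on the two preceding propositions just as you do. Your extra remark on well-definedness under choices of Bayesian inverse is a reasonable (and harmless) addition, though note that since almost-surely equal channels applied to a joint state with the appropriate marginal yield literally equal composites, the state $\rho$ and hence the object are equal on the nose, not merely up to equivalence.
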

\begin{proof}
Given by the relative product construction. 
\end{proof}

\section{Probability Sheaves and Random Variables}\label{sec:sheaves}

We can now begin to recreate Simpson's treatment of probability sheaves \cite{simpson2017probability,simpson:atomic} in our abstract setup. As before, $\C$ is a Markov category with conditionals, and $\S(\C)$ its category of sample spaces.

A \emph{probability presheaf} is a functor $P : \S(\C)^\op \to \set$. Concretely such a presheaf $P$ consists of a family of sets $P(\sp \Omega)$ indexed over sample spaces. If $\pi : \sp{\Omega'} \to \sp{\Omega}$ is a morphism in $\S(\C)$ and $x \in P(\sp \Omega)$ is an element, then we denote the functorial action of $P$ as 
\[ x \cdot \pi \defeq P(\pi)(x) \in F(\sp \Omega') \]
In accordance with Example~\ref{ex:extension}, we can see this action as an extension of the element $x$ to the larger sample space $\sp{\Omega'}$. A morphism of presheaves $f : P \to Q$ is a natural transformation; the naturality condition says that $f$ is equivariant with respect to the extension action: for all $x \in P(\sp \Omega)$ and $\pi : \sp{\Omega'} \to \sp{\Omega}$ we have $f_{\sp{\Omega'}}(x \cdot \pi) = f_{\sp \Omega}(x) \cdot \pi$. We write $\psh(\S(\C))$ for the topos of probability presheaves. 

\subsection{Presheaf of Random Elements}

\noindent For any set $A$, we write $\disc{A}$ for the constant presheaf with $\disc{A}(\sp{\Omega}) = A$. Unlike constant presheaves, the notion of \emph{random element} depends on the underlying sample space $\sp{\Omega}$. We formalize this as follows:

\begin{definition}
For each object $V$ of $\C$, we define a presheaf $\rv(V) : \S(\C)^\op \to \set$ of \emph{random elements} valued in $V$, as 
\begin{enumerate}
\item $\rv(V)(\Omega,p) \defeq \{ [X] : \Omega \to V \emph{ $p$-a.s. det } \}$ consists of $p$-almost sure equivalence classes of $p$-almost-surely deterministic morphisms in $\C$. 
\item the extension action is given by precomposition on representatives. If $X : \Omega \to V$ and $\pi : \sp{\Omega'} \to \sp{\Omega}$, then
\[ [X] \cdot \pi \defeq [X \circ \pi] \]
\end{enumerate}
\end{definition}

\begin{example}
For our example categories, the presheaves of random elements take the following simple forms
\begin{enumerate}
\item for $V \in \finstoch$, $\Omega \in \catname{FinProb}$, 
\[ \rv(V)(\sp{\Omega}) \cong \{ X : \Omega \to V \text{ any function } \} \]
\item for $V \in \setmulti$, $\Omega \in \surj$,
\[ \rv(V)(\Omega) \cong \{ X : \Omega \to V \text{ any function } \} \] 
\item for $V \in \borelstoch$, $\sp \Omega \in \S(\borelstoch)$,
\[ \rv(V)(\sp{\Omega}) \cong \{ X : \Omega \to V \text{ measurable } \} \]
\item for $\R^n \in \gauss$, $\R^m \in \coiso$,
\[ \rv(\R^n)(\R^m) \cong \{ X : \R^m \to \R^n \text{ affine-linear } \} \] 
\item for $V \in \strongname$, $K \in \catname{FinInj}^\op$, 
\[ \rv(V)(K) \cong \{ f : \rep{K} \to V \text{ equivariant } \}\]
\end{enumerate}
\end{example}

\begin{example}
We have a well-defined natural transformation $\mathrm{Law} : \rv(V) \to \disc{\C(I,V)}$ which assigns a random element $[X] : \Omega \to V$ to its law, by $\mathrm{Law}_{\sp \Omega}([X]) = X \circ p_{\Omega}$.
\end{example}

\noindent The presheaf $\rv(V)$ is closely related to the representable presheaves on $\S(\C)$: If $\sp{\Omega'}$ is a sample space, then $\y(\sp{\Omega'}) = \S(\C)(-,\sp{\Omega'})$ is a sub-presheaf of $\rv(\Omega')$ consisting of those random elements whose law is equal to $p_{\Omega'}$. Conversely, $\rv(V)$ is isomorphic to the coproduct of representables
\[ \rv(V) \cong \bigsqcup_{p : I \to V} \S(\C)(-, (V,p)) \]

\begin{proposition}
The random element construction defines a functor $\rv : \C_\det \to \psh(\S(\C))$ where $\C_\det \subseteq \C$ is the subcategory of deterministic morphisms. For $h : V \to W$ deterministic we define the natural transformation
\[ \rv(h) : \rv(V) \to \rv(W), \quad \rv(h)([X])_{\sp \Omega} \defeq [h \circ X] \]
\noindent Furthermore the tensor product on $\C_{\det}$ is a cartesian product, and the $\rv$ functor preserves it
\[ \rv(U \otimes V) \cong \rv(U) \times \rv(V) \]
\end{proposition}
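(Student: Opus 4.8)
The plan is to establish the final Proposition in three parts: (i) functoriality of $\rv$ on $\C_\det$, (ii) well-definedness of the action $\rv(h)$, and (iii) the monoidal-to-cartesian preservation $\rv(U \otimes V) \cong \rv(U) \times \rv(V)$. For part (i), I would first check that $\rv(h)$ as defined is a natural transformation for each deterministic $h : V \to W$, and then that $\rv$ respects identities and composition in $\C_\det$. The functoriality equations $\rv(\id_V) = \id_{\rv(V)}$ and $\rv(h' \circ h) = \rv(h') \circ \rv(h)$ reduce on representatives to $\id \circ X = X$ and $(h' \circ h) \circ X = h' \circ (h \circ X)$, so they hold strictly by associativity in $\C$.

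The crux of part (ii) is well-definedness, which has two facets that I must verify. First, $\rv(h)([X]) = [h \circ X]$ must be independent of the representative: if $X \ase{p} X'$ then I need $h \circ X \ase{p} h \circ X'$, which follows because postcomposition preserves $p$-almost-sure equality (this is immediate from the definition of $\ase{p}$ via the string-diagram characterization). Second, $h \circ X$ must itself be $p$-almost-surely deterministic so that it lands in $\rv(W)(\Omega,p)$; since $h$ is deterministic and $X$ is $p$-a.s. deterministic, their composite is $p$-a.s. deterministic (determinism is closed under composition, and composing an a.s.-deterministic map with an honestly deterministic one preserves the property). I would also confirm naturality of $\rv(h)$ in the sample-space variable, i.e. compatibility with the extension action $[X] \cdot \pi = [X \circ \pi]$, which again reduces to associativity of composition, $h \circ (X \circ \pi) = (h \circ X) \circ \pi$.

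For part (iii), the key observation is that on $\C_\det$ the monoidal unit $I$ is terminal (semicartesian) \emph{and} the copy maps are natural on deterministic morphisms, so $\otimes$ restricts to a cartesian product on $\C_\det$. The isomorphism $\rv(U \otimes V) \cong \rv(U) \times \rv(V)$ I would build componentwise: send a class $[Z] : \Omega \to U \otimes V$ to the pair $([\pi_U \circ Z],[\pi_V \circ Z])$ of its marginals, with inverse sending a pair $([X],[Y])$ to $[\langle X, Y\rangle]$. The main obstacle — and where the real content sits — is showing these are mutually inverse at the level of $p$-a.s.~equivalence classes. The forward-then-back composite requires $\langle \pi_U \circ Z, \pi_V \circ Z\rangle \ase{p} Z$, which is exactly \emph{positivity} (or rather relative positivity): since $Z$ is $p$-a.s.~deterministic, each marginal $\pi_U \circ Z$ is $p$-a.s.~deterministic, and relative positivity (the third proof principle recalled in the excerpt) gives that $Z$ is $p$-a.s.~equal to the tupling of its marginals. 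The back-then-forward composite uses the comonoid/projection identities $\pi_U \circ \langle X, Y\rangle = X$ and $\pi_V \circ \langle X, Y \rangle = Y$, which hold strictly. Finally I would note naturality of this iso in $\Omega$ (both sides act by precomposition, and pairing commutes with precomposition) to upgrade the family of bijections to an isomorphism of presheaves.
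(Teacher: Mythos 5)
Your proposal is correct. Note that the paper states this proposition without any proof, treating the verifications as routine, so there is no official argument to compare against; your write-up supplies exactly the details the paper leaves implicit, and they all go through. Two small refinements are worth recording. First, in part (iii) you check that the two maps are mutually inverse, but well-definedness of the inverse $([X],[Y]) \mapsto [\langle X,Y\rangle]$ itself requires two lemmas you do not state: that the pairing of two $p$-a.s.\ deterministic morphisms is again $p$-a.s.\ deterministic (so that $\langle X,Y\rangle$ lands in $\rv(U\otimes V)(\sp \Omega)$ at all), and that $\ase{p}$ is a congruence for pairing (so that $[\langle X,Y\rangle]$ does not depend on the representatives $X,Y$). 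Both follow from the standard fact that $p$-almost-sure equality is stable under placing morphisms in contexts that copy the input wire --- the same principle underlying your postcomposition argument in part (ii) --- but a careful proof should say so explicitly. Second, your appeal to relative positivity (and hence to conditionals) for $\langle \pi_U \circ Z, \pi_V \circ Z\rangle \ase{p} Z$ is valid in the paper's setting, but stronger than necessary here: since $Z$ itself is a.s.\ deterministic one has $\langle \pi_U \circ Z, \pi_V \circ Z\rangle = (\pi_U \otimes \pi_V)\circ\langle Z,Z\rangle \ase{p} (\pi_U \otimes \pi_V)\circ\cpy_{U\otimes V}\circ Z = Z$, using only the comonoid counit laws; relative positivity is the right tool when merely one marginal is a.s.\ deterministic, as in Lemma~\ref{lem:relpos}. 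Neither point affects the correctness of your argument.
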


\subsection{Sheaf Conditions}

\noindent In this section, we will show that the presheaves $\rv(V)$ and $\S(\C)(-,\sp{\Omega})$ are always \emph{sheaves} with respect to the \emph{atomic topology} on $\S(\C)$. This makes the Grothendieck topos of atomic sheaves $\sh(\S(\C))$ a natural setting for categorical probability. Sheaf conditions guarantee a well-behaved interplay between the values that a presheaf $P$ takes on different sample spaces $\sp{\Omega}$, and are intimately related with the independence structure (Propositions~\ref{prop:sheaf_pullback}, \ref{prop:sheaf_pushout}). Simpson has extensively studied the logical structure of atomic sheaves and their relationship with independence in \cite{simpson:atomic}.

Here, we will only introduce what is strictly needed about atomic topologies, following \cite{simpson:atomic}. For a general introduction to sheaf toposes, we refer to \cite{maclane2012sheaves}. The atomic topology is the Grothendieck topology where every singleton family $\{ \sp{\Omega'} \xrightarrow{\pi} \sp{\Omega} \}$ is covering. For this topology to be well-defined, one requires the \emph{right Ore condition}: every cospan can be completed to a commuting square. We have established this for $\S(\C)$ in (IP5). 

Let $P : \S(\C)^\op \to \set$ be a presheaf and $\pi : \sp{\Omega'} \to \sp{\Omega}$ be map in $\S(\C)$. An element $y \in P(\sp{\Omega'})$ is called $\pi$-invariant if for any parallel pair of maps $\rho, \rho' : \sp{\Omega''} \to \sp{\Omega'}$ with $\pi \circ \rho = \pi \circ \rho'$, we have $y \cdot \rho = y \cdot \rho'$.

\begin{definition}
A presheaf $P \in \psh(\S(\C))$ is \emph{separated} if for all $\pi : \sp{\Omega'} \to \sp{\Omega}$ and $x,y \in P(\sp{\Omega})$, if $x \cdot \pi = y \cdot \pi$ then $x=y$. 
\end{definition}

\begin{definition}
A presheaf $P \in \psh(\S(\C))$ is an \emph{atomic sheaf} if for every map $\pi : \sp{\Omega'} \to \sp{\Omega}$ and every $\pi$-invariant $y \in P(\sp{\Omega'})$, there is a unique $x \in P(\sp{\Omega})$ with $y = x \cdot \pi$. 
\end{definition}

\begin{proposition}
The presheaves $\rv(V)$ and $\y(\sp{\Omega'})$ are separated. 
\end{proposition}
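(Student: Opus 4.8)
The plan is to reduce separatedness of both presheaves to the fact, established in Proposition~\ref{prop:det_coiso}, that every map $\pi : \sp{\Omega'} \to \sp{\Omega}$ in $\S(\C)$ is a co-isometry. Thus the Bayesian inverse $\pi^\dagger$ is a (generally nondeterministic) section, $\pi \circ \pi^\dagger \ase{p} \id_\Omega$ in $\P(\C)$, where I write $p, p'$ for the states of $\sp{\Omega}, \sp{\Omega'}$. I will also use that $\pi^\dagger$ is state-preserving, $\pi^\dagger \circ p = p'$, which follows by marginalizing the defining equation~\eqref{eq:def_dagger} of the Bayesian inverse.

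I would treat $\rv(V)$ first, as it carries the real content. Unfolding the definitions, separatedness of $\rv(V)$ amounts to the implication: for $p$-almost-surely deterministic $X, Y : \Omega \to V$, if $X \circ \pi \ase{p'} Y \circ \pi$ then $X \ase{p} Y$. I would prove this by the chain
\[ X \;\ase{p}\; X \circ \pi \circ \pi^\dagger \;\ase{p}\; Y \circ \pi \circ \pi^\dagger \;\ase{p}\; Y. \]
The two outer equalities come from post-composing $\pi \circ \pi^\dagger \ase{p} \id_\Omega$ with $X$ and with $Y$ respectively, using that post-composition preserves almost-sure equality. The middle equality is obtained by precomposing the hypothesis $X \circ \pi \ase{p'} Y \circ \pi$ with $\pi^\dagger$.

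The one step needing care — and the main (minor) obstacle — is this precomposition: precomposing with a channel does \emph{not} preserve almost-sure equality in general. It does so here precisely because the source equality is taken at the pushforward state, which is the well-definedness of composition in $\P(\C)$ recorded as $[f]_{gp} \circ [g]_p = [f \circ g]_p$. Since $\pi^\dagger \circ p = p'$, the equality $X \circ \pi \ase{p'} Y \circ \pi$ may legitimately be precomposed with $\pi^\dagger$ to yield $X \circ \pi \circ \pi^\dagger \ase{p} Y \circ \pi \circ \pi^\dagger$, completing the chain and hence proving $[X] = [Y]$.

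Finally, $\y(\sp{\Omega'})$ is separated because it is a sub-presheaf of $\rv(\Omega')$ (the random elements whose law is $p_{\Omega'}$), and separatedness is inherited by sub-presheaves. Alternatively, and more directly, separatedness of the representable $\y(\sp{\Omega'})$ is literally the statement that precomposition by $\pi$ is injective on hom-sets, i.e. that every map in $\S(\C)$ is epic, which is Proposition~\ref{prop:sc_monic}.
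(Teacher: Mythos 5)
Your proof is correct, but it takes a genuinely different route from the paper. The paper argues directly at the level of string diagrams in $\C$: it uses almost-sure determinism of $\pi$ to commute the copy map past $\pi$, so that $\langle X,\id\rangle\circ p = \langle X\pi,\pi\rangle\circ p'$, applies the hypothesis $X\pi \ase{p'} Y\pi$, and reverses the manipulation for $Y$ --- no Bayesian inverse appears. You instead package the same determinism through the dagger structure: by Proposition~\ref{prop:det_coiso}, $\pi$ is a co-isometry, hence split epic in $\P(\C)$ with section $\pi^\dagger$, and separation becomes a three-step cancellation $X \ase{p} X\pi\pi^\dagger \ase{p} Y\pi\pi^\dagger \ase{p} Y$. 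You correctly isolate the one delicate point, namely that precomposition with the channel $\pi^\dagger$ is only legitimate because the hypothesis holds at the pushforward state $p' = \pi^\dagger\circ p$, which is exactly the well-definedness clause $[f]_{gp}\circ[g]_p = [f\circ g]_p$ in the definition of $\P(\C)$; without that remark the step would be unjustified, since precomposition does not preserve almost-sure equality in general. What your approach buys is conceptual clarity: it exhibits separation of $\rv(V)$ as nothing but (split) epicness of maps of sample spaces, and indeed your second argument for $\y(\sp{\Omega'})$ --- separation of a representable is literally right-cancellability, i.e.\ Proposition~\ref{prop:sc_monic} --- is more direct than the paper's, which instead realizes $\y(\sp{\Omega'})$ as a sub-presheaf of the separated presheaf $\rv(\Omega')$ (an argument you also give). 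What the paper's computation buys is self-containedness: it needs only determinism and measure-preservation of $\pi$, not the dagger machinery, and the same diagrammatic move is reused elsewhere in the paper. One small observation that would tighten your write-up: from $X\pi \ase{p'} Y\pi$ the laws automatically agree, $X\circ p = X\pi\circ p' = Y\pi\circ p' = Y\circ p$, so $X$ and $Y$ are parallel morphisms $(\Omega,p) \to (V, Xp)$ in $\S(\C)$ and the whole proposition reduces at once to epicness in $\S(\C)$.
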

\begin{proof}
Let $X,Y : \sp{\Omega} \to V$ be almost surely deterministic and let $\pi : \sp{\Omega'} \to \sp{\Omega}$ be an extension such that $X \circ \pi \ase{\sp{\Omega'}} Y \circ \pi$. By almost-sure determinism of $\pi$, we reason $X \ase{\sp{\Omega}} Y$ as
\[ \tikzfigscaled{separation} \]
The presheaf $\y(\sp{\Omega'})$ is separated as a sub-presheaf of the separated presheaf $\rv(\Omega')$. 
\end{proof}

\noindent The following helpful characterization relates $\pi$-invariant random elements to the notion of \emph{conditional expectation operators}. In categorical probability theory, conditional expectations can be identified with the composites $e = \pi^\dagger \circ \pi$ induced by almost-surely deterministic maps $\pi$ \cite{ensarguet2023categorical,perrone2024convergence}. The channel $e$ is a \emph{dagger idempotent}, meaning $e \circ e = e$ and $e^\dagger = e$. 

\begin{proposition}\label{prop:invariant}
Let $\pi : \sp{\Omega'} \to \sp{\Omega}$. A random element $Y \in \rv(V)(\sp{\Omega'})$ is $\pi$-invariant if and only if it satisfies $Y \ase{\sp{\Omega'}} Y \circ e$, where $e = \pi^\dagger \circ \pi$ is the conditional expectation associated with $\pi$.
\end{proposition}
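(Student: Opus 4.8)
The plan is to prove the two implications separately, handling the easier ``if'' direction by elementary manipulation of almost-sure equality, and reserving the relative-product construction for the ``only if'' direction. Throughout write $\sp{\Omega'} = (\Omega',p')$ and $\sp\Omega = (\Omega,p)$, so that $\pi$ is $p'$-a.s.\ deterministic with $\pi\circ p' = p$, its Bayesian inverse $\pi^\dagger$ is state-preserving ($\pi^\dagger\circ p = p'$), and $e = \pi^\dagger\circ\pi$.

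For the direction $Y \ase{\sp{\Omega'}} Y\circ e \Rightarrow$ $\pi$-invariance, I would take an arbitrary parallel pair $\rho,\rho' : \sp{\Omega''}\to\sp{\Omega'}$ with $\pi\circ\rho = \pi\circ\rho'$ and argue $Y\cdot\rho = Y\cdot\rho'$ by a short chain. Since the two composites agree in $\S(\C)$ we have $\pi\rho \ase{\sp{\Omega''}}\pi\rho'$; post-composing with $\pi^\dagger$ and then with $Y$ (both operations preserve almost-sure equality) yields $Y e\rho \ase{\sp{\Omega''}} Y e\rho'$. On the other hand, pre-composing the hypothesis with $\rho$ (legitimate because $\rho\circ p'' = p'$, via the well-definedness of composition in $\P(\C)$, $[f]_{gp}\circ[g]_p = [f\circ g]_p$) gives $Y\rho \ase{\sp{\Omega''}} Y e\rho$, and likewise for $\rho'$. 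Chaining the three equalities gives $Y\rho \ase{\sp{\Omega''}} Y\rho'$, i.e.\ $Y\cdot\rho = Y\cdot\rho'$.

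For the converse, assume $Y$ is $\pi$-invariant. The idea is to feed $\pi$-invariance the two projections $\pi_1,\pi_2 : \sp{\Omega'} \reldot_{\sp\Omega} \sp{\Omega'} \to \sp{\Omega'}$ of the relative product of $\pi$ with itself. Since the relative-product square commutes we have $\pi\circ\pi_1 = \pi\circ\pi_2$, so invariance gives $Y\pi_1 \ase{} Y\pi_2$ relative to the relative-product state $\rho$. The crux is then the identity
\[ \rho = \langle \id_{\Omega'}, e\rangle\circ p' = (\id_{\Omega'}\otimes \pi^\dagger\pi)\circ\cpy_{\Omega'}\circ p', \]
which I would establish from the defining equation of the Bayesian inverse together with $p = \pi\circ p'$. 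Granting this, marginalizing the equality $Y\pi_1 \ase{} Y\pi_2$ by deleting the first ($\pi_1$) leg turns the right-hand side into $\langle\id, Y\rangle\circ p'$ (because $\pi_2$ carries $\rho$ to its marginal $p'$) and the left-hand side into $\langle\id, Y e\rangle\circ p'$ (using cocommutativity of $\cpy_\Omega$ to reorder the two copies); equality of these two states over $\Omega'\otimes V$ is exactly $Y \ase{\sp{\Omega'}} Y\circ e$.

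The main obstacle is the identity $\rho = \langle\id,e\rangle\circ p'$ and the accompanying bookkeeping of which leg gets marginalized and where copy-cocommutativity is invoked; once that is in place, the remaining steps are routine uses of the facts that pre- and post-composition preserve almost-sure equality and that $\pi^\dagger$ preserves states.
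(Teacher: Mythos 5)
Your argument is essentially the paper's proof in lightly repackaged form: the paper's auxiliary sample space $(\Omega'\otimes\Omega',\psi)$ with $\psi=\langle\id,e\rangle\circ p'$ is exactly your relative product $\sp{\Omega'}\reldot_{\sp\Omega}\sp{\Omega'}$, and your key identity $\rho=\langle\id,e\rangle\circ p'$ is correct and follows, as you say, from the defining equation of the Bayesian inverse together with $p=\pi\circ p'$. The ``if'' direction is the same chain as in the paper. Where you improve the bookkeeping is in the hard direction: the paper verifies by hand that the projections are measure-preserving and that $\pi\rho_1\ase{\psi}\pi\rho_2$, the latter via relative positivity in the form $\langle\id,\pi\rangle\circ e\ase{\sp{\Omega'}}\langle e,\pi\rangle$ (the ``strong idempotent'' property of $e$), whereas you import both facts from the earlier proposition that the relative-product square commutes and is independent. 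Since that proposition is proved before the statement in question, this is non-circular and cleanly discharges the most delicate verification in the paper's proof.

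One justification is wrong as stated, although the step it supports is true. After deleting the first leg, the invariance equation reads $\langle e,Y\rangle\circ p'=\langle\id,Y\rangle\circ p'$, and you rewrite the left-hand side as $\langle\id,Ye\rangle\circ p'$ ``using cocommutativity of $\cpy$''. Cocommutativity only yields $\langle e,Y\rangle\circ p'=\sigma\circ\langle Y,e\rangle\circ p'$, i.e.\ it reorders output wires; it cannot move $e$ off one copy-leg and turn it into a post-composition under $Y$ on the other. The identity actually needed is symmetry of the coupling, $\langle e,\id\rangle\circ p'=\langle\id,e\rangle\circ p'$, which is precisely self-adjointness $e^\dagger=e$ of the conditional expectation (false for a general measure-preserving channel, e.g.\ a non-reversible stationary Markov kernel). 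It is available here, since the paper records that $e=\pi^\dagger\pi$ is a dagger idempotent; then $\langle e,Y\rangle\circ p'=(\id\otimes Y)\circ\langle e,\id\rangle\circ p'=(\id\otimes Y)\circ\langle\id,e\rangle\circ p'=\langle\id,Ye\rangle\circ p'$, repairing your step in one line. Alternatively, marginalize the \emph{middle} wire rather than the first: the two sides then become $\langle\id,Y\rangle\circ p'$ and $\langle\id,Ye\rangle\circ p'$ directly, with no symmetry argument needed---this is the route the paper takes.
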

\begin{proof}
Necessity is clear; if $Y \ase{\sp{\Omega'}} Y \circ \pi^\dagger \circ \pi$ and $\rho,\rho'$ are given with $\pi \circ \rho \ase{\sp{\Omega''}} \pi \circ \rho'$, then $Y \circ \rho \ase{\sp{\Omega''}} Y \circ \pi^\dagger \circ \pi \circ \rho \ase{\sp{\Omega''}} Y \circ \pi^\dagger \circ \pi \circ \rho' \ase{\sp{\Omega''}} Y \circ \rho'$, showing that $Y$ is $\pi$-invariant as an element of $\rv(V)$. For sufficiency, we apply $\pi$-invariance to the pair of projections $\rho_1, \rho_2 : (\Omega' \otimes \Omega',\psi) \to (\Omega',q)$, where $\psi = \langle \id, e \rangle \circ p_{\Omega'}$. 
\begin{enumerate}
\item $\rho_1,\rho_2$ are measure-preserving, because $e$ is an endomorphism on $\sp{\Omega'} \to \sp{\Omega'}$, hence $e \circ p_{\Omega'} = p_{\Omega'}$. 
\item we need to check that $\pi \rho_1 \ase \psi \pi \rho_2$. This simplifies to verifying that
	\[ \tikzfigscaled{glue_proj_inv} \]
	For this, it suffices to note that $\langle \id, \pi \rangle \circ e \ase{\sp{\Omega'}} \langle e, \pi \rangle$ by relative positivity. This is related to $e$ being a \emph{strong idempotent} in the sense of \cite[Definition~4.11]{fritz2023absolute}.
\item from $\pi$-invariance, we now obtain that $Y \circ \rho_1 \ase{\psi} Y \circ \rho_2$, which means
\[ \tikzfigscaled{glue_y_inv} \]
\noindent Marginalizing the middle wire gives $Y \ase{\sp{\Omega'}} Y \circ e$ as desired. \qedhere
\end{enumerate}
\end{proof}

\begin{proposition}
The presheaves $\rv(V)$ and $\y(\sp{V})$ are atomic sheaves. 
\end{proposition}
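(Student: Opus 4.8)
The plan is to dispatch uniqueness immediately and concentrate all the work on the gluing (existence) condition, where $\pi$-invariance must be converted into a genuine descended random element. Since $\rv(V)$ and $\y(\sp V)$ were just shown to be separated, uniqueness of the amalgamation is automatic: any two $x_1, x_2 \in P(\sp \Omega)$ with $x_1 \cdot \pi = x_2 \cdot \pi$ must agree. It therefore remains, for a fixed map $\pi : \sp{\Omega'} \to \sp{\Omega}$ and a $\pi$-invariant $Y \in \rv(V)(\sp{\Omega'})$, to produce an $X \in \rv(V)(\sp\Omega)$ with $X \cdot \pi = Y$. Invoking Proposition~\ref{prop:invariant}, $\pi$-invariance is the equation $Y \ase{\sp{\Omega'}} Y \circ e$ with $e = \pi^\dagger \circ \pi$, which I read as the identity $Y = Y\circ e$ in $\P(\C)$. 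This suggests the candidate $X \defeq Y \circ \pi^\dagger$, the composite of $Y$ with the Bayesian inverse (equivalently, the dagger) of $\pi$.

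First I would verify the gluing condition directly in $\P(\C)$: $X \circ \pi = Y \circ \pi^\dagger \circ \pi = Y \circ e = Y$, the last step being precisely $\pi$-invariance. The main obstacle is the remaining, less obvious requirement that $X$ be a legitimate element of $\rv(V)(\sp\Omega)$, i.e. that $X$ is $p_\Omega$-almost surely \emph{deterministic} rather than a mere channel --- composing the deterministic $Y$ with the nondeterministic channel $\pi^\dagger$ need not be deterministic in general, so invariance has to do real work here. I would settle this dagger-categorically using Proposition~\ref{prop:det_coiso}: it suffices to check that $X$ is a co-isometry. Writing $X^\dagger = \pi \circ Y^\dagger$ and using that $\pi$ and $Y$ are themselves co-isometries (being maps in $\S(\C)$), we compute
\[ X \circ X^\dagger = Y \circ \pi^\dagger \circ \pi \circ Y^\dagger = Y \circ e \circ Y^\dagger = Y \circ Y^\dagger = \id, \]
where the third equality again uses $Y \circ e = Y$ and the last uses that $Y$ is a co-isometry. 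Hence $X$ is deterministic, completing the verification that $\rv(V)$ is an atomic sheaf.

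Finally, for $\y(\sp V)$ I would exploit that it embeds into $\rv(V)$ as the sub-presheaf of random elements whose law is $p_V$. Given a $\pi$-invariant $Y \in \y(\sp V)(\sp{\Omega'})$, the same $Y$ is $\pi$-invariant in $\rv(V)$, so the construction above yields a unique $X \in \rv(V)(\sp\Omega)$ with $X \cdot \pi = Y$; it only remains to check that $X$ lands in the sub-presheaf, i.e. that $\mathrm{Law}(X) = p_V$. This is a one-line computation: $X \circ p_\Omega = X \circ \pi \circ p_{\Omega'} = Y \circ p_{\Omega'} = p_V$, using $\pi \circ p_{\Omega'} = p_\Omega$ and that $Y$ already has law $p_V$. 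Thus the amalgamation stays inside $\y(\sp V)$, which is therefore an atomic sheaf as well. The only genuinely delicate point throughout is the co-isometry computation, which is exactly where the hypothesis of $\pi$-invariance becomes indispensable.
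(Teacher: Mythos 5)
Your proof is correct and takes essentially the same route as the paper: uniqueness from separation, the candidate $X = Y \circ \pi^\dagger$, and Proposition~\ref{prop:invariant} to get $X \cdot \pi = Y \circ e = Y$. The one point of divergence is the determinism of $X$: where you run the explicit co-isometry computation $X X^\dagger = Y e Y^\dagger = Y Y^\dagger = \id$ (which is valid, using involutivity of the dagger on $\P(\C)$), the paper dispatches this in one line by the last clause of Proposition~\ref{prop:det_coiso}, since $X \circ \pi = Y$ is deterministic; your computation amounts to a re-derivation of that clause in this instance, and your explicit law check $X \circ p_\Omega = p_V$ for the sub-presheaf $\y(\sp V)$ makes precise a case the paper's proof leaves implicit.
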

\begin{proof}
Let $Y : \sp{\Omega'} \to V$ be $\pi$-invariant for $\pi : \sp{\Omega'} \to \sp{\Omega}$; we claim that there exists a unique $X : \sp{\Omega} \to V$ with $Y=X \cdot \pi$.
Uniqueness follows from separation. For existence, we attempt the definition $X = Y \circ \pi^\dagger$. By Proposition~\ref{prop:invariant}, we have $Y = X \cdot \pi$ as desired, and $X$ is almost surely deterministic because $X\pi$ is (Proposition~\ref{prop:det_coiso}).
\end{proof}

We can compare our direct proof of the sheaf property two general propositions about presheaves on a site $\S$ with an independent pullback structure \cite{simpson:atomic}. They illustrate a deep relationship between the independence structure and the atomic topology.

\begin{proposition}[\!\!{\cite[Theorem~6.4]{simpson:atomic}}]\label{prop:sheaf_pullback}
The following are equivalent for a presheaf $P : \S^\op \to \set$
\begin{enumerate}
\item $P$ is an atomic sheaf
\item $P$ maps independent squares in $\S$ to pullbacks in $\set$.
\end{enumerate}
\end{proposition}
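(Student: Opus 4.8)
The plan is to first unwind what ``$P$ maps independent squares to pullbacks'' says concretely, and then prove the two implications separately. Writing the action of $P$ as $x \cdot h = P(h)(x)$, applying the contravariant functor $P$ to an independent square as in \eqref{eq:square} turns it into a square of sets with $P(Z)$ as apex and $P(\sp\Omega)$ as sink; so the condition is precisely that the comparison map
\[ P(Z) \longrightarrow \{ (a,b) \in P(\sp X) \times P(\sp Y) : a \cdot f = b \cdot g \}, \qquad c \longmapsto (c \cdot u, c \cdot v) \]
is a bijection. I will also use at the outset that every atomic sheaf is separated: if $x \cdot \pi = y \cdot \pi$, then this common element is $\pi$-invariant (immediate, since $\pi\rho = \pi\rho'$ forces $(x\cdot\pi)\cdot\rho = (x\cdot\pi)\cdot\rho'$), so uniqueness in the sheaf condition forces $x = y$.

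For the implication ``pullbacks $\Rightarrow$ sheaf'', the key observation is that a single independent square already encodes both halves of the sheaf condition. Given $\pi : \sp{\Omega'} \to \sp{\Omega}$, I would complete the cospan $\sp{\Omega'} \xrightarrow{\pi} \sp{\Omega} \xleftarrow{\pi} \sp{\Omega'}$ to an independent pullback with projections $\rho_1,\rho_2$ using (IP5); this square is in particular independent, so by hypothesis $P(\sp{\Omega}) \cong \{(s,t) : s \cdot \rho_2 = t \cdot \rho_1\}$ via $c \mapsto (c\cdot\pi, c\cdot\pi)$. Injectivity of this map is exactly separatedness, hence uniqueness of descent. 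For existence, a $\pi$-invariant $y$ satisfies $y \cdot \rho_1 = y \cdot \rho_2$ by applying invariance to the pair $(\rho_1,\rho_2)$, so $(y,y)$ lies in the pullback, and surjectivity yields $x$ with $x \cdot \pi = y$.

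For the converse ``sheaf $\Rightarrow$ pullbacks'', I fix an independent square and let $\sp Q = \sp X \reldot_{\sp Z} \sp Y$ be the relative product of the cospan $(u,v)$, with projections $q_1, q_2$; since the given square is an independent kite over $(u,v)$ there is a mediating map $m : \sp\Omega \to \sp Q$ with $q_1 m = f$ and $q_2 m = g$. Given $a,b$ with $a \cdot f = b \cdot g$, the identity $(a\cdot q_1)\cdot m = (b\cdot q_2)\cdot m$ holds, and separatedness applied along $m$ upgrades it to $a \cdot q_1 = b \cdot q_2$, so it suffices to prove surjectivity for the relative-product square. There I would show that $a$ is $u$-invariant, descend it along $u$ to obtain $c$ with $c \cdot u = a$, and finally read off $c \cdot v = b$ from $(c\cdot v)\cdot q_2 = a\cdot q_1 = b \cdot q_2$ together with separatedness along $q_2$; uniqueness of $c$ is again separatedness along $u$.

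The main obstacle is establishing that $a$ is $u$-invariant, i.e.\ that $a \cdot \rho = a \cdot \rho'$ whenever $u\rho = u\rho'$ for maps $\rho,\rho' : \sp W \to \sp X$. The difficulty is that the square formed by $(\rho,\rho')$ over $u$ need not be independent, so it does not factor through $\sp Q$ directly. The remedy is to complete the cospan $\sp W \xrightarrow{u\rho} \sp Z \xleftarrow{v} \sp Y$ to an independent pullback $\sp{P'}$ with projections $r_1 : \sp{P'} \to \sp W$ and $r_2 : \sp{P'} \to \sp Y$ (IP5). Each square $(\rho,\, u\rho\,;\, u,\, \id_Z)$ is independent by (IP1), and pasting it onto $\sp{P'}$ via (IP3) shows that $(\rho r_1, r_2)$ — and likewise $(\rho' r_1, r_2)$ — is an independent kite over $(u,v)$; the universal property of $\sp Q$ then produces $n, n' : \sp{P'} \to \sp Q$ with $q_1 n = \rho r_1$, $q_1 n' = \rho' r_1$, and $q_2 n = q_2 n' = r_2$. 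Applying the actions of $n$ and $n'$ to $a \cdot q_1 = b \cdot q_2$ gives $(a\cdot\rho)\cdot r_1 = b \cdot r_2 = (a\cdot\rho')\cdot r_1$, whence $a\cdot\rho = a\cdot\rho'$ by separatedness along $r_1$. Once this invariance is in hand, the remaining steps are purely formal applications of the sheaf property and of separatedness.
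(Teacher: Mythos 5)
Your proof is correct, but there is no in-paper argument to compare it against: the paper imports this proposition verbatim from Simpson \cite[Theorem~6.4]{simpson:atomic} without proof (its own direct sheaf proofs for $\rv(V)$ and representables go through Bayesian inversion and Proposition~\ref{prop:invariant} instead, and are special to $\S(\C)$). So what you have produced is a self-contained reconstruction from the abstract axioms, and it checks out. The unwinding of condition (2) as bijectivity of the comparison map into $\{(a,b): a\cdot f = b\cdot g\}$ is right; the direction ``pullbacks $\Rightarrow$ sheaf'' correctly exploits that a single (IP5) completion of the cospan $(\pi,\pi)$ delivers both halves at once, with injectivity being exactly separatedness along $\pi$ and $\pi$-invariance of $y$ placing $(y,y)$ in the pullback set. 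In the converse you correctly identify the crux --- establishing that $a$ is $u$-invariant --- and your resolution is sound: since $u\rho = u\rho'$, the two cospans $(u\rho,v)$ and $(u\rho',v)$ coincide, so one (IP5) completion $\sp{P'}$ serves both; pasting the (IP1) square onto it via (IP3) (with (IP2) implicitly used to transpose) yields the two independent kites, the mediating maps $n,n'$ transport $a\cdot q_1 = b\cdot q_2$ to $(a\cdot\rho)\cdot r_1 = b\cdot r_2 = (a\cdot\rho')\cdot r_1$, and separatedness along $r_1$ finishes; likewise the preliminary upgrade of $a\cdot f = b\cdot g$ to $a\cdot q_1 = b\cdot q_2$ via separatedness along $m$, and the final identification $c\cdot v = b$ via separatedness along $q_2$, are both correct. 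Two small remarks: since the proposition is stated for an abstract $\S$ with an independent pullback structure, you should introduce $\sp Q$ as an (IP5) completion of $(u,v)$ rather than as the relative product $\sp X \reldot_{\sp Z} \sp Y$, which is its concrete realization in $\S(\C)$ (your argument only ever uses the universal property, so nothing else changes); and it is worth noting that your proof never invokes (IP4).
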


\begin{proposition}[\!\!\!{\cite[Corollary~6.6]{simpson:atomic}}]\label{prop:sheaf_pushout}
The following are equivalent 
\begin{enumerate}
\item representable presheaves are atomic sheaves
\item independent squares in $\S$ are pushouts.
\end{enumerate}
\end{proposition}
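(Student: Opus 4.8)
The plan is to derive this directly from Proposition~\ref{prop:sheaf_pullback}, which is presumably exactly why Simpson states it as a corollary. That proposition already characterizes atomic sheaves as the presheaves sending independent squares to pullbacks in $\set$; restricting attention to representables reduces statement~(1) to the assertion that every representable $\y(A) = \S(-,A)$ sends independent squares to pullbacks in $\set$. The whole argument then turns on the elementary duality between pushouts in $\S$ and pullbacks of hom-sets.

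Concretely, I would fix an independent square with span maps $f \colon \Omega \to X$, $g \colon \Omega \to Y$ and cocone maps $u \colon X \to Z$, $v \colon Y \to Z$. Applying the contravariant functor $\S(-,A)$ turns it into a commuting square of sets whose corner $\S(Z,A)$ sits over the cospan $\S(X,A) \xrightarrow{-\circ f} \S(\Omega,A) \xleftarrow{-\circ g} \S(Y,A)$. I would then spell out that this square is a pullback in $\set$ precisely when, for every compatible pair $(p,q)$ with $p \colon X \to A$, $q \colon Y \to A$ and $p \circ f = q \circ g$, there is a unique $r \colon Z \to A$ satisfying $r \circ u = p$ and $r \circ v = q$. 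This is verbatim the universal property of the square being a pushout, tested against the object $A$.

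I would assemble the equivalence by interchanging the two universal quantifiers. By the standard representability characterization of colimits, a commuting square is a pushout in $\S$ if and only if $\S(-,A)$ sends it to a pullback in $\set$ for \emph{every} object $A$. Hence: all independent squares are pushouts $\iff$ for every $A$ the representable $\S(-,A)$ sends every independent square to a pullback $\iff$ (by Proposition~\ref{prop:sheaf_pullback}) every representable $\y(A)$ is an atomic sheaf. This is precisely the claimed equivalence of~(1) and~(2).

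Since the heavy lifting is done by Proposition~\ref{prop:sheaf_pullback}, I expect no genuine obstacle; the only points requiring care are bookkeeping ones. I must track the contravariance of $\S(-,A)$ so that the pushout cocone data $(p,q,r)$ is matched against the correct corners of the pullback square of hom-sets, and I must confirm that the quantifier interchange is legitimate, namely that ``pushout'' is equivalent to ``corepresentably a pullback for all $A$ simultaneously'', which is exactly the content of the co-Yoneda universal-property characterization and holds with no further hypotheses on $\S$.
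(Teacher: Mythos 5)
Your proposal is correct and is essentially the intended argument: the paper states this result as a citation of Simpson's Corollary~6.6, which is derived from Theorem~6.4 (Proposition~\ref{prop:sheaf_pullback} here) in exactly the way you describe, combining that characterization with the standard fact that a square is a pushout in $\S$ if and only if every representable $\S(-,A)$ sends it to a pullback in $\set$. The quantifier interchange and the contravariance bookkeeping are handled correctly, so there is no gap.
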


\noindent As we have shown that independent squares in $\S(\C)$ are pushouts (Proposition~\ref{prop:pushout}), this gives another way of showing that representable presheaves on $\S(\C)$ are indeed atomic sheaves. Our development has now come full circle. Starting with the theory of fresh name generation $\strongname$, the category of sample spaces $\S(\strongname)$ is equivalent to $\inj^\op$. Its atomic sheaf topos $\sh(\inj^\op)$ is the Schanuel topos, which is again equivalent to the category $\nom$ of nominal sets \cite{pitts2013nominal}.

\section{Conclusion and Future Work}

\noindent We have replicated substantial parts of Simpson's development on independence and probability sheaves in the synthetic setting of a sample spaces over a Markov category. We have phrased everything in terms of abstract notions and proof principles, without relying on specific details of the individual models. We have recovered known examples from probability theory, nondeterminism and fresh name generation in this setting, as well generalized the theory to novel ones such as Gaussian probability.

This line of work is only at the beginning, and several directions have been left for future work: It will be interesting to study the intrinsic logical notions of atomic sheaf toposes (supports, atomic equivalence etc. \cite{simpson:atomic}) in our setting. Similarly, a treatment of random variables featuring a notion of (conditional) expectation is future work. 

A monad $\mathcal M$ can be defined on probability sheaves which models the allocation of fresh random variables, defined in \cite{simpson2017probability} as
\[ \mathcal MP(\sp{\Omega}) = \int^{\sp{\Omega'}} \S(\Omega',\Omega) \times P(\Omega') \]
This monad is a direct generalization of the name generation monad $N$ on the Schanuel topos. Connections between probabilistic separation logic and nominal techniques are an active area of research \cite{li:lilac,li:nominal}. Given our characterization $\S(\gauss) \cong \coiso$, the category of atomic sheaves on isometries $\sh(\coiso) \cong [\iso,\set]_{\mathrm{at}}$ seems to be of interest as a linear-algebraic generalization of the Schanuel topos that seems tightly related to Gaussian probability. 

Lastly, we may wonder how the categories $\S$ and $\P$ and their relationships may be characterized abstractly, and how far such a pair of categories is from arising from a Markov category $\C$ with conditionals. We conjecture that the Kleisli category of the monad $\mathcal M$ on probability sheaves will be a Markov category into which $\C$ embeds, though conditionals are no longer guaranteed for that larger category. \\ 

\noindent\textbf{Acknowledgements:} I am grateful for the feedback and fruitful discussions about this work with many people, particularly Chris Heunen, Matthew Di Meglio, Paolo Perrone and Alex Simpson. A preliminary version of this work was presented at ItaCa Fest 2024.

\pagebreak

\bibliographystyle{plain}
\bibliography{main}

\section{Appendix: Proofs}\label{app:markov}

\begin{figure*}[th!]
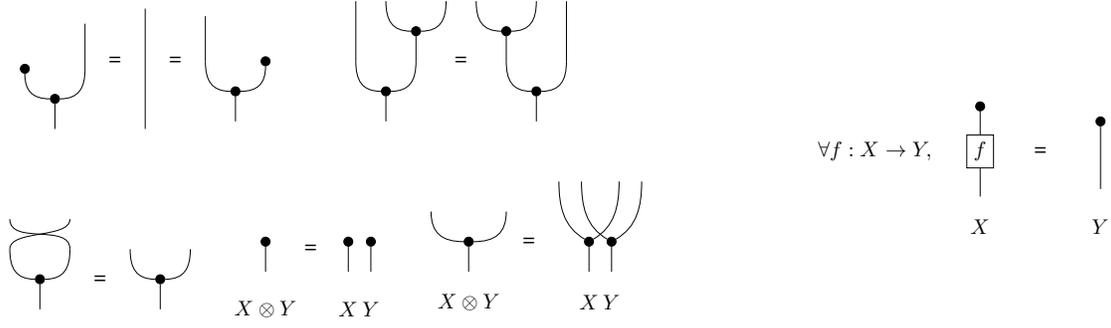

\begin{center}
$\vcenter{\hbox{\tikzfigscaled{markov_ax}}}$
\end{center}
\caption{The axioms for Markov categories}
\label{fig:markov_axioms}.
\end{figure*}

\begin{proof}[Proof of Proposition~\ref{prop:faithful}]
The first two points are equivalent by the usual characterization of equivalence of categories by means of an essentially surjective, full and faithful functor. For an equivalence of dagger categories, certain isomorphisms need to be unitary, but this is automatic in our setting (Proposition~\ref{prop:det_coiso}). It remains to show the equivalence of the first and third point. Take a probability space $(X,p)$ and choose an isomorphism 
	\begin{equation}\begin{tikzcd}
			{(X,p)} && {(S,\sigma)}
			\arrow["\pi", shift left, from=1-1, to=1-3]
			\arrow["i", shift left, from=1-3, to=1-1]
		\end{tikzcd} \label{eq:support_iso} \end{equation}
	with a faithful $(S,\sigma)$. We claim that this is a split support.
	\begin{itemize}
		\item we have $\pi i \ase{\sigma} \id_S$ because $\pi,i$ are inverses, but $\sigma$ is faithful, so $\pi i = \id_S$
		\item we have $i \pi \ase{p} \id_X$ because $\pi,i$ are inverses
		\item assume $fi = gi$, then $f \ase{p} g$ because
		\[ f = f \circ \id_X \ase{p} fi\pi = gi\pi \ase{p} g \circ \id_X = g \]
		\item conversely if $f \ase{p} g$, then by precomposition $fi \ase{\sigma} gi$ so $fi=gi$ by faithfulness of $\sigma$
	\end{itemize}
	Conversely, if $p$ has a split support, then \eqref{eq:support_iso} is an isomorphism for $\sigma = \pi p$. It remains to argue why $\sigma$ is faithful: For any $f, g : S \to Y$ with $f \ase{\sigma} g$, we have $f \pi \ase{p} g\pi$ by precomposition. By assumption on the support this implies $f\pi i = g \pi i$, but $\pi i = \id_S$ so $f=g$. 
\end{proof}

\begin{proof}[Proof of Lemma~\ref{lem:indep}]
We verify the equalities of the composites (I)-(VI). By the characteristic property of Bayesian inversion, we have
\[ \tikzfigscaled{indep_characterization_proof_1} \]
Because $f$ and $g$ are co-isometries, we have
\begin{align*}
u^\dagger &= ff^\dagger u^\dagger = f(uf)^\dagger = fd^\dagger \\
v^\dagger &= gg^\dagger v^\dagger = g(vg)^\dagger = gd^\dagger
\end{align*} 
showing $(I) = (II)$. Again, by determinism of $f$, we have
\[ \tikzfigscaled{indep_characterization_proof_2} \]
\end{proof} 

\section{Appendix: Example Categories}

In this appendix, we spell out definitions and examples in more detail. Independent squares will be labelled
\begin{equation}\begin{tikzcd}
	\Omega && X \\
	\\
	Y && Z
	\arrow["f", from=1-1, to=1-3]
	\arrow["g"', from=1-1, to=3-1]
	\arrow["u"', from=1-3, to=3-3]
	\arrow["v", from=3-1, to=3-3]
\end{tikzcd}\label{eq:square_appendix} \end{equation}

\subsection{Gaussian Probability}\label{sec:gaussian}

\noindent Given a Gaussian sample space, we can replace it up to isomorphism by a simpler sample space in two stages: first by a faithful one, and then by a \emph{standard sample space} which has covariance matrix $I_k$. 

Let $\mathcal N(\mu,\Sigma)$ be a Gaussian distribution which is supported on the subspace $S = \mu + \mathrm{im}(\Sigma)$. Let $k=\mathrm{dim}(S) = \mathrm{rank}(\Sigma)$ be its dimension, and choose an affine isomorphism $i : \R^k \to S$. Then $i$ is a split support inclusion as shown in \cite[III.10]{stein2021compositional}. A sample space $(\R^n, \mathcal N(\mu,\Sigma))$ is therefore faithful if and only if $\Sigma$ has full rank (i.e. is positive definite).

Assume now that $\Sigma$ has full rank; we call a \emph{standard sample space} one equipped with a standard normal distribution, i.e. $(\R^n,\mathcal N(0,I_n))$. 
By Cholesky decomposition, there is a decomposition $\Sigma = LL^T$ where $L \in \R^{n \times n}$ is invertible. Thus, the affine map $f(x) = Lx + \mu$ defines an isomorphism of sample spaces
\[ (\R^n,\mathcal N(0, I_n)) \xrightarrow{f} (\R^n,\mathcal N(\mu,\Sigma)) \] 
This shows that every sample space in $\S(\gauss)$ is isomorphic to a standard sample space. 

A channel in $\P(\gauss)$ between standard sample spaces 
\[ f : (\R^m,\N(0,I_m)) \to (\R^n,\N(0,I_n)) \]
is necessarily of the form $f(x) = Ax + \N(0,\Sigma)$ where measure preservation imposes the condition
\begin{equation} AA^T + \Sigma = I_n \label{eq:posdef} \end{equation} Because we can read off $\Sigma$ from \eqref{eq:posdef}, we can identify such channels with matrices $A \in \R^{n \times m}$ such that $I_n - AA^T$ is positive semidefinite. 

We are left to show that this condition is equivalent to $A$ being a contraction. This is a well-known fact, but we'll add a proof for reference. Firstly, note that $A$ is a contraction iff its operator norm is $||A|| \leq 1$, and $||A|| = ||A^T||$, thus $A$ is a contraction iff its transpose is. Now note that for all $y \in \R^n$,
\begin{align*}
0 &\leq \langle y,(I_n-AA^T)y\rangle \\
\langle A^Ty,A^Ty \rangle &\leq \langle y,y\rangle \\
||A^Ty|| &\leq ||y||
\end{align*}
hence $I_n - AA^T$ is positive semidefinite iff $A^T$ is a contraction. 

To compute the Bayesian inverse of $f$, we apply an ansatz $f^\dagger(y) = By + \N(0,\Phi)$ and solve equation \eqref{eq:def_dagger}, which reads
\[ \begin{pmatrix}
I_m & A^T \\
A & AA^T + \Sigma
\end{pmatrix} = \begin{pmatrix}
BB^T + \Phi & B \\ B^T & I_n
\end{pmatrix} \]
We obtain $B=A^T$ and $\Phi = I_m - A^TA$.  

Lastly, the channel associated with \eqref{eq:posdef} is deterministic iff $\Sigma = 0$, i.e. $AA^T = I_n$. Therefore, the category $\S(\gauss)$ is equivalent to the category $\coiso$ of Euclidean co-isometries.

\emph{Independence structure:} Using characterization \eqref{prop:dagger_characterization}, a commutative square in $\S(\gauss)$ is independent if and only if the corresponding square in $\coiso$  
\[\begin{tikzcd}
	{\R^w} && {\R^m} \\
	\\
	{\R^n} && {R^k}
	\arrow["F", from=1-1, to=1-3]
	\arrow["G"', from=1-1, to=3-1]
	\arrow["U", from=1-3, to=3-3]
	\arrow["V"', from=3-1, to=3-3]
\end{tikzcd}\]
satisfies $GF^T = V^TU$. Note that the category of co-isometries $\coiso$ is equivalent to the opposite category of isometries $\iso^\op$ by means of the transposition functor $A \mapsto A^T$. A square of isometries
\[\begin{tikzcd}
	{\R^w} && {\R^m} \\
	\\
	{\R^n} && {R^k}
	\arrow["A"', from=1-3, to=1-1]
	\arrow["B", from=3-1, to=1-1]
	\arrow["J"', from=3-3, to=1-3]
	\arrow["K", from=3-3, to=3-1]
\end{tikzcd}\]
is considered independent if $B^TA = KJ^T$. The category of probability sheaves over $\gauss$ can thus be identified with covariant functors $\iso \to \set$ taking independent squares to pullbacks. Note that by \ref{prop:pushout}, each independent square in $\iso$ will be a pullback, however not every pullback is necessarily independent. For example
\[\begin{tikzcd}
	& {\R^2} \\
	\R && \R \\
	& 0
	\arrow["F", from=2-1, to=1-2]
	\arrow["G"', from=2-3, to=1-2]
	\arrow["0", from=3-2, to=2-1]
	\arrow["0"', from=3-2, to=2-3]
\end{tikzcd} \text{ with } F = \begin{pmatrix} 1 \\ 0 \end{pmatrix}, G = \frac 1 {\sqrt 2} \begin{pmatrix}  1 \\ 1 \end{pmatrix} \]
is a pullback of isometries that is not independent. We thank Matthew Di Meglio for pointing out this counterexample. 

\subsection{Nominal Sets}\label{sec:nom}

\noindent We briefly recall the notion of nominal sets \cite{gabbay1999new,pitts2013nominal}. Let $\A$ be a countably infinite set of names, and $\perm(\A)$ be the group of finite permutations of $\mathbb A$. If $X$ is a set with a $\perm(\A)$-action $(\pi,x) \mapsto \pi \cdot x$, we say that a finite set of names $A \subseteq \A$ \emph{supports} $x \in X$ if
\[ \forall \pi \in \perm(\A), (\forall a \in A, \pi(a) = a) \Rightarrow \pi \cdot x = x \]
We say that $A$ \emph{strongly supports} $x \in X$ if
\[ \forall \pi \in \perm(\A), (\forall a \in A, \pi(a) = a) \Leftrightarrow \pi \cdot x = x \]
A \emph{nominal set} is a $\perm(\A)$-set $X$ in which every element is supported by some finite set of names. A \emph{strong nominal set} is one where each element is strongly supported by a set of names \cite{tzevelekos2009nominal}. Nominal sets and strong nominal sets form categories $\nom$ and $\snom$ respectively, where morphisms $f : X \to Y$ are equivariant functions. 

The \emph{orbit} $\orb(x)$ of an element $x \in X$ is the subset $\{ \pi \cdot x \s \pi \in \perm(\A) \}$. We write $\Pi_0(X)$ for the set of orbits of $X$. A nominal set $X$ is called \emph{atomic} if it consists only of a single orbit. Each nominal set is the coproduct of its orbits
\[ X \cong \bigsqcup_{W \in \Pi_0(X)} W \]

\begin{lemma}\label{lemma:atomic_def}
Let $x \in X$ and $y \in Y$ be elements of nominal sets, where $X$ is strong and atomic. Then there exists at most one equivariant function $f : X \to Y$ with $f(x) = y$, and such a function exists if and only if $\supp(y) \subseteq \supp(x)$.
\end{lemma}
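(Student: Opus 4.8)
The plan is to exploit atomicity of $X$ to reduce the entire problem to the single assignment $x \mapsto y$. Since $X$ is atomic we have $X = \orb(x) = \{\pi \cdot x : \pi \in \perm(\A)\}$, and any equivariant $f$ with $f(x) = y$ is forced to satisfy $f(\pi \cdot x) = \pi \cdot f(x) = \pi \cdot y$. This formula already settles uniqueness: two equivariant functions that agree at $x$ agree on the whole orbit, hence on all of $X$. It also tells us that the only possible candidate for existence is the map $\pi \cdot x \mapsto \pi \cdot y$, so the real content of the lemma is exactly when this candidate is well-defined.

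For necessity of the condition $\supp(y) \subseteq \supp(x)$, I would show that any such $f$ forces $\supp(x)$ to support $y$. Concretely, take any $\pi \in \perm(\A)$ fixing $\supp(x)$ pointwise; since $\supp(x)$ supports $x$ we get $\pi \cdot x = x$, and therefore $\pi \cdot y = \pi \cdot f(x) = f(\pi \cdot x) = f(x) = y$. Thus $\supp(x)$ is a support of $y$, and minimality of the least support $\supp(y)$ yields $\supp(y) \subseteq \supp(x)$. Note this direction only uses that $\supp(x)$ is a support, and not strongness.

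For sufficiency, assuming $\supp(y) \subseteq \supp(x)$, I would define $f(\pi \cdot x) \defeq \pi \cdot y$; equivariance is then immediate from the definition, so everything rests on well-definedness, i.e. showing that $\pi \cdot x = \pi' \cdot x$ implies $\pi \cdot y = \pi' \cdot y$. Writing $\sigma = (\pi')^{-1}\pi$, the hypothesis reads $\sigma \cdot x = x$. This is precisely the step where strongness of $X$ is essential: because $\supp(x)$ \emph{strongly} supports $x$, the equation $\sigma \cdot x = x$ upgrades to the statement that $\sigma$ fixes $\supp(x)$ pointwise. The inclusion $\supp(y) \subseteq \supp(x)$ then guarantees that $\sigma$ fixes $\supp(y)$ pointwise, and since $\supp(y)$ supports $y$ we conclude $\sigma \cdot y = y$, that is $\pi \cdot y = \pi' \cdot y$.

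The main obstacle is exactly this well-definedness check, whose crux is the use of the \emph{bi-implication} in the definition of strong support: without it, $\sigma \cdot x = x$ would not let us conclude that $\sigma$ fixes $\supp(x)$ pointwise, and the transfer of stabilizers from $x$ to $y$ along the inclusion would break down. I would also record in passing that in a strong nominal set the strong support coincides with the least support $\supp(x)$ (a routine freshness argument using a transposition), which justifies identifying the two and appealing to minimality in the necessity argument above.
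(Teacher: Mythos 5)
Your proof is correct, and it differs from the paper's in packaging rather than in substance: the paper's entire proof is a citation of \cite[Proposition~15.11]{pitts2013nominal} (the standard orbit-transfer result: an equivariant map $\orb(x) \to Y$ sending $x \mapsto y$ exists, uniquely, iff $\stab(x) \subseteq \stab(y)$), with the remark that ``the stabilizer condition is discharged by $X$ being strong.'' What you have done is inline the proof of that cited proposition in the strong case. Your well-definedness check of $\pi \cdot x \mapsto \pi \cdot y$ is exactly the stabilizer condition $\stab(x) \subseteq \stab(y)$, and your observation that strongness upgrades $\sigma \cdot x = x$ to ``$\sigma$ fixes $\supp(x)$ pointwise'' (together with your parenthetical transposition argument showing the strong support coincides with the least support $\supp(x)$) is precisely how the stabilizer hypothesis gets discharged and converted into the inclusion $\supp(y) \subseteq \supp(x)$. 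The trade-off: the paper's citation is shorter and leans on standard nominal-sets machinery, while your self-contained version makes visible exactly where the bi-implication in the definition of strong support is used (only in the sufficiency/well-definedness direction; necessity and uniqueness need only ordinary support and atomicity), which is a genuinely useful clarification given that the paper elsewhere stresses that strongness is the crucial hypothesis distinguishing $\snom$ from $\nom$.
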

\begin{proof}
By applying \cite[Proposition~15.11]{pitts2013nominal}; the stabilizer condition is discharged by $X$ being strong. 
\end{proof}

\noindent An important role is played by the atomic nominal sets $\rep n$,
\[ \rep n = \{ (a_1,\ldots,a_n) \emph{ distinct names } \} \]
We call a nominal set $X$ \emph{representable} if it is isomorphic to $\rep n$ for some $n \in N$ (this is related to the Yoneda lemma \cite[Exercise~6.1]{pitts2013nominal}).

If $x \in X$ is strongly supported by $A=\{a_1,\ldots,a_n\}$, then the orbit $\orb(x)$ is isomorphic to $\rep n$. This follows immediately from Lemma~\ref{lemma:atomic_def}, by extending the assignment $f(x) = (a_1,\ldots,a_n)$ under equivariance. As a consequence, strong nominal sets are precisely the coproducts of representables. Products and coproducts of strong nominal sets are strong. \\

\noindent\emph{Schanuel topos:} It is well-known that $\nom$ is equivalent to the \emph{Schanuel topos} $\sh(\inj^\op)$ \cite{pitts2013nominal}. Concretely, the Schanuel topos consists of covariant functors $P : \inj \to \set$ preserving pullbacks. Under the equivalence, the nominal sets $\rep n$ correspond to the representable presheaves $\inj(n,-)$. In particular
\begin{equation} \nom(\rep m, \rep n) \cong \inj(n,m) \label{eq:nom_yoneda} \end{equation}
by the Yoneda lemma. \\

\noindent\emph{Name generation:} There is monad $N : \nom \to \nom$ which models fresh name generation (known as the name-generation monad, or free restriction set monad) \cite[Section~9.5]{pitts2013nominal}. The elements of $N(X)$ are equivalence classes of pairs $(A,x)$ where $A$ is a finite set of names and $x \in X$. Two such pairs $(A,x)$ and $(A',x')$ are considered equivalent if $x' = \pi \cdot x$ for some permutation $\pi$ which only interchanges names mentioned in $A,A'$. This models $\alpha$-equivalence; we write the equivalence class of $(A,x)$ as $\name{A}{x}$. For example, we have $\name{a,b}{a} = \name{c}{c}$ by the renaming $\pi = (a\,c)$. 

If $X$ is strong so is $N(X)$, and it holds that $\supp(\name A x) = \supp(x) \setminus A$. The monad restricts to $N : \snom \to \snom$. We denote by $\strongname$ its Kleisli category, which has the structure of a Markov category.

\begin{proposition}
To give a state $I \to X$ in $\strongname$ is to give an orbit $\orb(x)$.
\end{proposition}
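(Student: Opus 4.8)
The plan is to unwind the definition of a state in the Kleisli category and match it against the description of $N(X)$ given above. First I would recall that in $\strongname$ the monoidal unit $I$ is the one-point strong nominal set $1$ with trivial $\perm(\A)$-action (semicartesianness makes $I$ terminal, and affineness gives $N(1) \cong 1$). Hence a state $I \to X$ is, by definition of the Kleisli category, an equivariant function $1 \to N(X)$. Such a function is determined by the image of the unique point, and equivariance forces that image to be a fixed point of the $\perm(\A)$-action on $N(X)$. So a state is precisely an \emph{equivariant element} of $N(X)$, i.e. an element of empty support.

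Next I would describe the empty-support elements concretely and set up the correspondence with orbits. Using the support formula $\supp(\name A x) = \supp(x) \setminus A$, an element $\name A x$ has empty support exactly when $\supp(x) \subseteq A$; and since hiding a name that does not occur is vacuous, every such element can be normalized to the canonical form $\name{\supp(x)}{x}$ (here $\supp(x)$ is finite because $X$ is a nominal set). I would then define the map from empty-support elements to orbits by $\name A x \mapsto \orb(x)$, and the map from orbits to empty-support elements by $W \mapsto \name{\supp(x)}{x}$ for any representative $x \in W$. Well-definedness of the first map holds because $\name A x = \name{A'}{x'}$ entails $x' = \pi \cdot x$ for a suitable permutation, whence $\orb(x) = \orb(x')$; well-definedness of the second holds because for $x' = \pi \cdot x$ we get $\name{\supp(x')}{x'} = \pi \cdot \name{\supp(x)}{x} = \name{\supp(x)}{x}$, the final equality using that the element is fixed by all permutations. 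Verifying that these two assignments are mutually inverse then uses exactly the canonical form for the round trip starting in $N(X)$, giving the desired bijection between states and orbits.

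The main obstacle is the bookkeeping around the equivalence relation defining $N(X)$: I must justify the ``garbage collection'' of non-supporting names ($\name A x = \name{\supp(x)}{x}$ when $\supp(x)\subseteq A$) and confirm that two empty-support elements coincide if and only if their underlying elements lie in a common orbit. Both are routine consequences of the stated description of $N$ together with finiteness of supports, so the substance of the argument is the translation from ``Kleisli state'' to ``equivariant element'' rather than any genuine computation.
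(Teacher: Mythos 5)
Your proposal is correct and follows essentially the same route as the paper's proof: identify states $I \to X$ with empty-support (equivariant) elements of $N(X)$, normalize these to the canonical form $\name{\supp(x)}{x}$, and observe that two such elements coincide precisely when their underlying elements share an orbit. You simply spell out the well-definedness and mutual-inverse checks that the paper leaves implicit.
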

\begin{proof}
We have $\nom(I,N(X)) \cong \Pi_0(X)$: To give a point $1\to N(X)$ is to give an element with empty support in $N(X)$, and those are of the form $\langle A \rangle x$ where $A = \supp(x)$. Two such elements are $\alpha$-equivalent iff they lie in the same orbit.
\end{proof}

\noindent For example, the nominal set $\A \times \A$ has two states in $\strongname$: two independent fresh names $\name{a,b}{(a,b)}$, and a single shared fresh name $\name{a}{(a,a)}$.

\begin{proposition}
$\strongname$ has conditionals.
\end{proposition}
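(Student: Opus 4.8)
The plan is to reduce to an atomic (single-orbit) domain and then construct the conditional by hand, using strongness to recover shared fresh names. First I would exploit that every strong nominal set is a coproduct of its orbits, each orbit isomorphic to some $\rep n$, and that binary product distributes over coproducts in $\snom$ (so $\otimes$ distributes over coproducts in $\strongname$, since the monad $N$ is commutative affine and $\otimes$ is the underlying product). Writing $A = \bigsqcup_i A_i$ with $A_i \cong \rep{n_i}$, a morphism $f : A \to X \otimes Y$ is the copairing of its restrictions $f_i : A_i \to X \otimes Y$, and $X \otimes A \cong \bigsqcup_i (X \otimes A_i)$. Hence a conditional for $f$ is assembled summand-wise from conditionals for each $f_i$, and the $X$-marginal is computed likewise. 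This reduces the problem to the case $A = \rep n$.

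For atomic domain I would describe $f$ concretely. By Lemma~\ref{lemma:atomic_def}, the equivariant map $f : \rep n \to N(X \otimes Y)$ is determined by its value on the generating tuple $\bar a = (a_1,\dots,a_n)$, say $f(\bar a) = \name{B}{(x_0,y_0)}$. Dropping names of $B$ that do not occur in the pair, I may assume $B \subseteq \supp(x_0) \cup \supp(y_0)$; the constraint $\supp(f(\bar a)) \subseteq \{a_1,\dots,a_n\}$ then reads $(\supp(x_0)\cup\supp(y_0)) \setminus B \subseteq \{a_1,\dots,a_n\}$. The $X$-marginal is $f_1(\bar a) = \name{B}{x_0} = \name{B\cap\supp(x_0)}{x_0}$. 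For the conditional $c : X \otimes \rep n \to N(Y)$ I would set, on the orbit of $(x_0, \bar a)$,
\[ c(x_0, \bar a) = \name{B \setminus \supp(x_0)}{y_0}, \]
defining $c$ arbitrarily on the remaining orbits (they never enter the factorization equation). The point is that the \emph{shared} fresh names $B \cap \supp(x_0)$ are no longer allocated by $c$ but read off the observed value $x_0$, while the \emph{private} names $B \setminus \supp(x_0)$, used only by $y_0$, are freshly bound. Since $X$ and $\rep n$ are strong, $X \otimes \rep n$ is strong, so by Lemma~\ref{lemma:atomic_def} it suffices to verify the support condition $\supp(c(x_0,\bar a)) \subseteq \supp(x_0) \cup \{a_1,\dots,a_n\}$: a name $t \in \supp(y_0) \setminus (B\setminus\supp(x_0))$ either lies outside $B$, whence $t \in \supp(y_0)\setminus B \subseteq \{a_1,\dots,a_n\}$, or lies in $B \cap \supp(x_0) \subseteq \supp(x_0)$.

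Finally I would verify the factorization from the definition of conditionals by evaluating the composite on $\bar a$ and tracking fresh-name allocation through the Kleisli composition of $N$. The copy map duplicates $x_0$ \emph{without} reallocating names --- this non-naturality being exactly the source of correlation --- so the copy fed into $c$ carries the same realized shared names as the copy emitted on the $X$-output. The monad multiplication then merges the names bound by $f_1$, namely $B \cap \supp(x_0)$, with those bound by $c$, namely $B \setminus \supp(x_0)$, freshness of allocation keeping the two disjoint; they reassemble to $B$, yielding $\name{B}{(x_0,y_0)} = f(\bar a)$.

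The main obstacle is precisely this fresh-name bookkeeping: making rigorous, in terms of the equivalence relation defining $N$ and of Kleisli composition, that the shared names survive as a single binder while the private names are generated afresh and remain disjoint. This is where strongness is indispensable: for a strong nominal set the realized value $x_0$ determines $\supp(x_0)$ unambiguously, so the conditional can recover exactly which fresh names are shared; without strongness the split $B = (B\cap\supp(x_0)) \sqcup (B\setminus\supp(x_0))$ would not be a well-defined equivariant operation, and the construction of $c$ would break down.
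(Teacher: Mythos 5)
Your construction coincides with the paper's own proof sketch: the paper likewise defines the conditional orbit-by-orbit via Lemma~\ref{lemma:atomic_def}, setting $f|_X(a,x) = \name{C'}{y}$ with $C' = C \setminus \supp(x)$ whenever $f(a) = \name{C}{(x,y)}$ and extending by equivariance --- exactly your $c(x_0,\bar a) = \name{B \setminus \supp(x_0)}{y_0}$, with strongness playing the same essential role of making the split into shared and private bound names equivariantly well-defined. Your extra details (the coproduct reduction to atomic domains, the support-condition check enabling Lemma~\ref{lemma:atomic_def}, and the fresh-name bookkeeping in the factorization) correctly fill in steps the paper leaves implicit in its sketch.
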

\begin{proof}[Proof sketch]
Given $f : A \to N(X \times Y)$, we use Lemma~\ref{lemma:atomic_def} to define the conditional on orbit-by-orbit. If $f(a) = \langle C \rangle (x,y)$, we define
\[ f|_X(a,x) = \langle C' \rangle y \text{ where } C' = C \setminus \{ \supp(x) \} \]
and extend by equivariance. 
\end{proof}
\noindent Note that strongness is a crucial assumption to use Lemma~\ref{lemma:atomic_def}. The Kleisli category of $N$ on $\nom$ does \emph{not} have all conditionals \cite[Proposition~25.21]{stein2021structural}.

\begin{proposition}
Sample spaces in $\strongname$ can be described as pairs $(X,W)$ where $W \subseteq X$ is an orbit. The inclusion $(W,W) \to (X,W)$ is a support inclusion, and the sample space is faithful if and only if $W=X$.
\end{proposition}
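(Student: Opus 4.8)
The plan is to handle the three assertions in turn, leaning on the preceding proposition that identifies states $I \to X$ in $\strongname$ with orbits of $X$. That proposition immediately yields the first assertion: a sample space is a pair $(X,p)$, and $p$ is the same data as a choice of orbit $W = \orb(w_0) \subseteq X$, so sample spaces are exactly pairs $(X,W)$ with $W$ an orbit. For the remaining two assertions my strategy is to exhibit $(W,W)$ as a \emph{faithful} sample space isomorphic to $(X,W)$ via the inclusion $i \colon W \hookrightarrow X$, and then invoke Proposition~\ref{prop:faithful}, under which an isomorphism onto a faithful space is precisely a split support inclusion. Writing $X \cong W \sqcup X'$ as a coproduct of orbits (strong nominal sets are coproducts of representables), I take $i$ to be the coproduct injection, made into a deterministic Kleisli map by postcomposing with the unit of $N$; it preserves the state since it carries the orbit $W$ of $(W,W)$ to the orbit $W$ of $(X,W)$.

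The key construction is a retraction $\pi \colon X \to W$ in $\strongname$, i.e.\ an equivariant Kleisli map $X \to N(W)$. On the summand $W$ I let $\pi$ be the unit inclusion $w \mapsto \name{}{w}$, and on each remaining orbit $W_j \subseteq X'$ I let $\pi$ be the \emph{constant fresh} map sending $W_j$ to the empty-support element $\name{\supp(w_0)}{w_0} \in N(W)$; this is equivariant precisely because its value has empty support, and it is a legal map by Lemma~\ref{lemma:atomic_def}. Combining these over the coproduct yields $\pi$. Then $\pi i = \id_W$ holds on the nose (it is the unit), while $i\pi \ase{p} \id_X$ because $p$ is supported on $W$, where $i\pi$ acts as the identity; thus $i$ and $\pi$ are mutually inverse in $\P(\strongname)$. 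Faithfulness of $(W,W)$ is the rigidity of atomic strong nominal sets: since $W$ is a single orbit carrying its full-support state, any two Kleisli maps $f,g \colon W \to Y$ that are $p$-almost surely equal must agree at the orbit's generator, and by Lemma~\ref{lemma:atomic_def} such a map is determined by that single value, so $f=g$. With $(W,W)$ faithful and $i$ an isomorphism, Proposition~\ref{prop:faithful} shows $i$ is a support inclusion.

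Finally, faithfulness of $(X,W)$ itself reduces, via the testing clause of the split-support characterization, to $i$ being epic in $\strongname$: $(X,W)$ is faithful iff $f \ase{p} g \Rightarrow f=g$, iff $fi = gi \Rightarrow f=g$, iff $i$ is epic. If $W = X$ then $i = \id$ is trivially epic (equivalently $(X,W)=(W,W)$ is already faithful). If $W \subsetneq X$, I pick any other orbit $W_1 \neq W$ of $X$ and define $g \colon X \to X$ to be the identity on every summand except $W_1$, where it is the constant-fresh map into $W$; then $g \neq \id_X$ while $gi = i = \id_X \circ i$, so $i$ is not epic and $(X,W)$ is not faithful. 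The main obstacle is the honest construction and verification of $\pi$: one must use the freshness structure of the monad $N$ to define $\pi$ on the orbits outside $W$, check its equivariance, and confirm $i\pi \ase{p} \id_X$; everything else follows routinely from Lemma~\ref{lemma:atomic_def} and Proposition~\ref{prop:faithful}.
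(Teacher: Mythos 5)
Your proof is correct and takes essentially the same route as the paper: the paper's proof consists of exactly your support projection $\pi$ (the unit on $W$, and the constant empty-support element $\name{\supp(w)}{w}$ on elements outside $W$) together with the observation that $W$-almost-sure equality is tested by evaluation at the orbit's generator, which is your rigidity argument via Lemma~\ref{lemma:atomic_def}. Your additional details---explicitly checking the split-support clauses through Proposition~\ref{prop:faithful} and the epicness counterexample showing non-faithfulness when $W \subsetneq X$---merely spell out what the paper leaves implicit.
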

\begin{proof}
If $W=\orb(x)$ is an orbit, two equivariant functions $f,g : X \to N(Y)$ are $W$-almost surely equal if $f(x)=g(x)$. The support projection $\pi : X \to N(W)$ is given by 
\[
\pi(x) = \begin{cases}
	\langle \rangle x, &x \in W \\
	\langle \supp(w) \rangle w, &x \notin W
\end{cases}
\]
where $w$ is an arbitrary element of $W$. 
\end{proof}

\begin{proposition}
$\S(\strongname)$ is equivalent to $\inj^\op$
\end{proposition}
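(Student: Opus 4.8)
The plan is to establish an equivalence of categories $\S(\strongname) \simeq \inj^\op$ by explicitly describing what the objects and morphisms of $\S(\strongname)$ are, after reducing to the faithful subcategory. The preceding propositions tell me that every sample space is isomorphic to one of the form $(\rep n, \rep n)$ for some $n \in \mathbb N$ (the faithful representable case, using the support inclusion $(W,W) \to (X,W)$ together with the identification of orbits as $\rep n$), so by Proposition~\ref{prop:faithful} it suffices to compute the morphisms between these standard faithful sample spaces and check they match $\inj^\op$. Concretely, I would define a functor $\Phi : \inj^\op \to \S(\strongname)$ sending the finite set (cardinality) $n$ to the sample space $(\rep n, \rep n)$, and on morphisms sending an injection $\iota : m \hookrightarrow n$ (a morphism $n \to m$ in $\inj^\op$) to an appropriate reindexing map $\rep n \to \rep m$.

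First I would verify that $\Phi$ is essentially surjective: this is exactly the content of the earlier proposition stating each sample space is isomorphic to some $(\rep n, \rep n)$. Next, the heart of the argument is the computation on hom-sets. A morphism $(\rep n, \rep n) \to (\rep m, \rep m)$ in $\S(\strongname)$ is an almost-surely deterministic, measure-preserving Kleisli map. Since the objects are faithful, Proposition~\ref{prop:faithful} lets me drop the equivalence classes and almost-sure qualifiers, so such a morphism is an honest deterministic equivariant function $\rep n \to \rep m$ that preserves the orbit state. Because $\rep n$ is strong and atomic, Lemma~\ref{lemma:atomic_def} applies: an equivariant function out of $\rep n$ is uniquely determined by where it sends the generating tuple $(a_1,\ldots,a_n)$, and such a function exists precisely when the support condition $\supp(\text{target}) \subseteq \supp(a_1,\ldots,a_n) = \{a_1,\ldots,a_n\}$ holds. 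A measure-preserving map must send the generator into the orbit $\rep m$, i.e. to a tuple of $m$ distinct names drawn from $\{a_1,\ldots,a_n\}$. The key point is that determinism forces the output to carry no freshly generated (bound) names, so the map is genuinely a reindexing: it picks out an injection from the $m$ output coordinates into the $n$ input coordinates. This is exactly an element of $\inj(m,n) \cong \inj^\op(n,m)$, matching \eqref{eq:nom_yoneda}.

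I would then check functoriality and that $\Phi$ is full and faithful: faithfulness follows from the uniqueness clause of Lemma~\ref{lemma:atomic_def} (an equivariant map out of an atomic strong set is determined by the image of the generator), and fullness from the existence clause together with the observation that every such injection gives a legitimate deterministic measure-preserving map. Combining essential surjectivity with fullness and faithfulness yields the equivalence; the dagger/equivalence subtleties are automatic here since we are working in the faithful subcategory where almost-sure equality is equality.

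The main obstacle I anticipate is pinning down precisely why a \emph{deterministic} (as opposed to general channel) measure-preserving Kleisli map cannot allocate any fresh names, and hence must be a pure reindexing rather than something that outputs $\langle C \rangle$ with $C \neq \emptyset$. This is where determinism (equivalently, the co-isometry/split-epic characterization of Proposition~\ref{prop:det_coiso}) must be used carefully: a map introducing a genuinely fresh bound name would fail to commute with copying, since the two copies would receive independently-fresh names rather than a shared value. Making this argument airtight — translating the string-diagrammatic determinism condition into the statement that the underlying equivariant function lands in $\rep m \subseteq N(\rep m)$ with trivial binder and has image-support contained in the source names — is the technical crux, after which the identification with $\inj^\op$ via Lemma~\ref{lemma:atomic_def} is essentially bookkeeping.
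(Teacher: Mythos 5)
Your proposal is correct and follows essentially the same route as the paper: reduce via Proposition~\ref{prop:faithful} to the faithful sample spaces $(\rep n,\rep n)$ and identify the hom-sets with injections, which the paper does by citing \eqref{eq:nom_yoneda} and you do by invoking Lemma~\ref{lemma:atomic_def} directly. The step you flag as the technical crux --- that determinism forces a Kleisli map to allocate no fresh names, hence to be a plain equivariant function in $\nom$ --- is precisely what the paper leaves implicit in its appeal to \eqref{eq:nom_yoneda}, and your copying argument (a bound fresh name copied once differs from two independently generated fresh names) is the correct justification for it.
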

\begin{proof}
By Proposition~\ref{prop:faithful}, every sample space in $\S(\strongname)$ is isomorphic to a faithful sample space of the form $(\rep n, \rep n)$. By \eqref{eq:nom_yoneda}, to give a map $\rep m \to \rep n$ is to give an injection $f : n \to m$, and every such map is automatically state-preserving. 
\end{proof}

\end{document}